\documentclass{amsart}
\usepackage{moreverb,url}
\usepackage{amssymb,amsmath,amsfonts,mathrsfs}
\usepackage{bm}
\usepackage{float}
\usepackage{verbatim}
\usepackage[square,numbers]{natbib}
\usepackage{stmaryrd}
\usepackage{fancyhdr}
\usepackage{syntax,etoolbox}
\usepackage{graphicx}
\usepackage{xcolor}
\usepackage{hyperref}
\hypersetup{colorlinks=true,allcolors=blue}
\usepackage[square,numbers]{natbib}
\usepackage[left=2.5cm, right=2.5cm, bottom=3cm]{geometry}
\usepackage{lineno}
%\linenumbers

\newtheorem{theorem}{Theorem}[section]
\newtheorem{lemma}{Lemma}[section]

\setcounter{secnumdepth}{3}
% %%%%%%%%%%%%%%%%%%%%%%%%%%%%%%%
\newtheorem{corollary}[theorem]{Corollary}

\providecommand{\customgenericname}{}
\newcommand{\newcustomproblem}[2]{%
	\newenvironment{#1}[1]
	{%
		\renewcommand\customgenericname{#2}%
		\renewcommand\theinnercustomgeneric{##1}%
		\innercustomgeneric
	}
	{\endinnercustomgeneric}
}

\newcustomproblem{customprob}{Problem}

\newcommand*{\bqed}{\hfill\ensuremath{\blacksquare}}%

\def\dd{\, \mathrm{d}}

\begin{document}
	
	\today
	
	%%
	%% The title of the paper goes here.  Edit to your title.
	%%
	
	\title[An obstacle problem for flexural shells]{Asymptotic analysis of linearly elastic flexural shells subjected to an obstacle in absence of friction}
	
	%%
	%% Now edit the following to give your name and address:
	%% 
	
	\author{Paolo Piersanti}
	\address{Department of Mathematics and Institute for Scientific Computing and Applied Mathematics, Indiana University Bloomington, 729 East Third Street, Bloomington, Indiana, USA}
	\email{ppiersan@iu.edu}

\begin{abstract}
In this paper we identify a set of two-dimensional variational inequalities governing the displacement of a linearly elastic flexural shell subjected to a confinement condition, expressing that all the points of the admissible deformed configurations remain in a given half-space. The action of friction is neglected.
\smallskip
\noindent \textbf{Keywords.} Variational inequalities $\cdot$ Flexural shells $\cdot$ Penalty method $\cdot$ Obstacle problems

\smallskip
\noindent \textbf{MSC 2010.} 35J86, 47H07, 74B05.
\end{abstract}

\maketitle

\section{Introduction}
\label{Sec:0}

Unilateral contact problems arise in many fields such as medicine, engineering, biology and material science. For instance, the description of the motion inside the human heart of the three Aorta valves, which can be regarded as linearly elastic shells, is governed by a mathematical model built up in a way such that each valve remains confined in a certain portion of space without penetrating or being penetrated by the other two valves. In this direction we cite the recent references~\cite{Quarteroni2021-3}, \cite{Quarteroni2021-2} and~\cite{Quarteroni2021-1}.

The displacement of a linearly elastic shell is modeled, in general, via the classical equations of three-dimensional linearized elasticity (cf., e.g., \cite{Ciarlet1988}). The intrinsic complexity of this model, however, prevents certain situations from being amenably studied like, for instance, when the shell is non-homogeneous and anisotropic (cf., e.g., \cite{CaillerieSanchez1995a} and~\cite{CaillerieSanchez1995b}), or its thickness varies periodically (cf., e.g., \cite{TelegaLewinski1998a} and~\cite{TelegaLewinski1998b}). It might thus be useful to perform a \emph{dimension reduction} in order to obtain approximate models which are more amenable to analyze and to implement numerically.

The identification of two-dimensional limit models for time-independent linearly elastic shells was extensively treated by Ciarlet and his associates in the seminal papers~\cite{CiaDes1979, CiaLods1996a,CiaLods1996b,CiaLods1996c,CiaLods1996d,CiaLodsMia1996} for the purpose of justifying Koiter's model in the case where no obstacles are taken into account. We also mention the papers~\cite{Shen2018,Shen2019,Shen2020}, which are about the numerical computation of the solution of the obstacle-free linear Koiter's model in the time-dependent case.
For the justification of Koiter's model in the time-dependent case where the action of temperature is considered, we refer the reader to~\cite{Pie-2021}.
In the aforementioned papers no confinement conditions were imposed. The recent papers~\cite{PWDT2D} and~\cite{PWDT3D} provide examples of variational-inequalities-based models arising in biology. The paper~\cite{PieTem2023}, instead, treats the modeling and analysis of the melting of a shallow ice sheet by means of a set of doubly nonlinear parabolic variational inequalities.

In the recent papers~\cite{CiaPie2018b,CiaPie2018bCR,CiaMarPie2018b,CiaMarPie2018}, Ciarlet and his associates fully justified Koiter's model in the case where the linearly elastic shell under consideration is an elliptic membrane shell subject to the aforementioned confinement condition.

The confinement condition we are here considering considerably departs from the Signorini condition usually considered in the existing literature. Indeed, the Signorini contact condition states that only the ``lower face'' of the shell is required to remain above the ``horizontal'' plane (cf., e.g. Chapter~63 of~\cite{Zeidler1986}). Such a confinement condition renders the asymptotic analysis considerably more difficult as the constraint now bears on a vector field, the displacement vector field of the reference configuration, instead of on only a single component of this field.

The recovery of a set of two-dimensional variational inequalities as a result of a rigorous asymptotic analysis conducted on a \emph{ad hoc} three-dimensional model based on the classical equations of three-dimensional linearized elasticity in the case where the linearly elastic shell under consideration is a flexural shell has only been addressed, to our best knowledge, by L\'{e}ger and Miara in the paper~\cite{LegMia2018} in a very amenable geometrical framework. In the paper~\cite{LegMia2018} the authors do not consider the action of friction. The method proposed in the paper~\cite{LegMia2018}, which makes use of a very lengthy argument exploiting the properties of cones, is suitable for treating the case where only one linearly elastic flexural shell is considered. In the context of multi-physics multi-scale problems, like for instance those considered by Quarteroni and his associates in~\cite{Quarteroni2021-3}, \cite{Quarteroni2021-2} and~\cite{Quarteroni2021-1}, it is not however clear whether the approach proposed by L\'{e}ger and Miara is suitable. The purpose of this paper is to remedy this situation by presenting a new method, based on the rigorous asymptotic analysis technique developed by Ciarlet, Lods and Miara~\cite{CiaLodsMia1996} and the properties of the penalty method for constrained optimization problem (cf., e.g., \cite{PGCNLAO}), for recovering the same set of two-dimensional variational inequalities for linearly elastic flexural shells that were recovered in the recent papers~\cite{CiaPie2018b,CiaPie2018bCR} in a more general geometrical framework. It is also worth mentioning that the method we present in this paper makes use of considerably less lengthy computations than those in~\cite{LegMia2018}. In particular, the approach based on the penalty method we are going to implement will save us the effort of constructing a suitable vector field in the instance of recovering the sought \emph{two-dimensional limit model}. We refer the reader to the references~\cite{KikuchiOden1988,PieMMS2020,Scholz1986,ShenPiePie2020}.

More precisely, in this paper we recover, via a rigorous asymptotic analysis as the thickness approaches zero over a \emph{ad hoc} three-dimensional model (\emph{three-dimensional} in the sense that it is defined over a three-dimensional subset of $\mathbb{R}^3$), a set of two-dimensional (\emph{two-dimensional} in the sense that it is defined over a two-dimensional subset of $\mathbb{R}^2$) variational inequalities governing the displacement of a linearly elastic flexural shell subject to remain confined in a half-space in absence of friction. The problem under consideration is an \emph{obstacle problem}.

The paper is divided into five sections (including this one). In section~\ref{Sec:1} we recall some background and notation. In section~\ref{Sec:2} we recall the formulation and the properties of a three-dimensional obstacle problem for ``general'' linearly elastic shell. In section~\ref{Sec:3} we specialize the formulation presented in the previous section to the case where the linearly elastic shell under consideration is a flexural shell, and then we scale the three-dimensional obstacle problem in a way such that the integration domain becomes independent of the thickness parameter. The penalized version of the three-dimensional obstacle problem is introduced at the end of this section.
Finally, in section~\ref{Sec:4}, a rigorous asymptotic analysis is carried out and the sought set of two-dimensional variational inequalities is recovered.

\section{Geometrical preliminaries} \label{Sec:1}

For details about the classical notions of differential geometry used in this section and the next one, see, e.g.~\cite{Ciarlet2000} or \cite{Ciarlet2005}.

Greek indices, except $\varepsilon$, take their values in the set $\{1,2\}$, while Latin indices, except when they are used for indexing sequences, take their values in the set $\{1,2,3\}$, and the summation convention with respect to repeated indices is systematically used in conjunction with these two rules. The notation $\mathbb{E}^3$ designates the three-dimensional Euclidean space whose origin is denoted by $O$; the Euclidean inner product and the vector product of $\bm{u}, \bm{v} \in \mathbb{E}^3$ are denoted $\bm{u} \cdot \bm{v}$ and $\bm{u} \times \bm{v}$; the Euclidean norm of $\bm{u} \in \mathbb{E}^3$ is denoted $\left|\bm{u} \right|$. The notation $\delta^j_i$ designates the Kronecker symbol.

Given an open subset $\Omega$ of $\mathbb{R}^n$, notations such as $L^2(\Omega)$, $H^1(\Omega)$, or $H^2 (\Omega)$, designate the usual Lebesgue and Sobolev spaces, and the notation $\mathcal{D} (\Omega)$ designates the space of all functions that are infinitely differentiable over $\Omega$ and have compact supports in $\Omega$. The notation $\left\| \cdot \right\|_X$ designates the norm in a normed vector space $X$. Spaces of vector-valued functions are denoted with boldface letters.

The positive and negative parts of a function $f:\Omega \to \mathbb{R}$ are respectively denoted by:
$$
f^{+}(x):=\max\{f(x),0\}\quad\textup{ and }\quad f^{-}(x):=-\min\{f(x),0\} \quad x \in \Omega.
$$

The boundary $\Gamma$ of an open subset $\Omega$ in $\mathbb{R}^n$ is said to be Lipschitz-continuous if the following conditions are satisfied (cf., e.g., Section~1.18 of~\cite{PGCLNFAA}): Given an integer $s\ge 1$, there exist constants $\alpha_1>0$ and $L>0$, and a finite number of local coordinate systems, with coordinates $\bm{\phi}'_r=(\phi_1^r, \dots, \phi_{n-1}^r) \in \mathbb{R}^{n-1}$ and $\phi_r=\phi_n^r$, sets $\tilde{\omega}_r:=\{\bm{\phi}_r \in\mathbb{R}^{n-1}; |\bm{\phi}_r|<\alpha_1\}$, $1 \le r \le s$, and corresponding functions
$$
\tilde{\theta}_r:\tilde{\omega}_r\to\mathbb{R},
$$
such that
$$
\Gamma=\bigcup_{r=1}^s \{(\bm{\phi}'_r,\phi_r); \bm{\phi}'_r \in \tilde{\omega}_r \textup{ and }\phi_r=\tilde{\theta}_r(\bm{\phi}'_r)\},
$$
and 
$$
|\tilde{\theta}_r(\bm{\phi}'_r)-\tilde{\theta}_r(\bm{\upsilon}'_r)|\le L |\bm{\phi}'_r-\bm{\upsilon}'_r|, \textup{ for all }\bm{\phi}'_r, \bm{\upsilon}'_r \in \tilde{\omega}_r, \textup{ and all }1\le r\le s.
$$

We observe that the second last formula takes into account overlapping local charts, while the last set of inequalities express the Lipschitz continuity of the mappings $\tilde{\theta}_r$.

An open set $\Omega$ is said to be locally on the same side of its boundary $\Gamma$ if, in addition, there exists a constant $\alpha_2>0$ such that
\begin{align*}
	\{(\bm{\phi}'_r,\phi_r);\bm{\phi}'_r \in\tilde{\omega}_r \textup{ and }\tilde{\theta}_r(\bm{\phi}'_r) < \phi_r < \tilde{\theta}_r(\bm{\phi}'_r)+\alpha_2\}&\subset \Omega,\quad\textup{ for all } 1\le r\le s,\\
	\{(\bm{\phi}'_r,\phi_r);\bm{\phi}'_r \in\tilde{\omega}_r \textup{ and }\tilde{\theta}_r(\bm{\phi}'_r)-\alpha_2 < \phi_r < \tilde{\theta}_r(\bm{\phi}'_r)\}&\subset \mathbb{R}^n\setminus\overline{\Omega},\quad\textup{ for all } 1\le r\le s.
\end{align*}

Following~\cite{PGCLNFAA}, a \emph{domain in} $\mathbb{R}^n$ is considered as a bounded Lipschitz domain, namely, a bounded and connected open subset $\Omega$ of $\mathbb{R}^n$, whose boundary $\partial \Omega$ is Lipschitz-continuous, the set $\Omega$ being locally on a single side of $\partial \Omega$.

Let $\omega$ be a domain in $\mathbb{R}^2$, let $y = (y_\alpha)$ denote a generic point in $\overline{\omega}$, and let $\partial_\alpha := \partial / \partial y_\alpha$ and $\partial_{\alpha \beta} := \partial^2/\partial y_\alpha \partial y_\beta$. A mapping $\bm{\theta} \in \mathcal{C}^1(\overline{\omega}; \mathbb{E}^3)$ is an \emph{immersion} if the two vectors
$$
\bm{a}_\alpha(y) := \partial_\alpha \bm{\theta}(y)
$$
are linearly independent at each point $y \in \overline{\omega}$. Then the image $\bm{\theta} (\overline{\omega})$ of the set $\overline{\omega}$ under the mapping $\bm{\theta}$ is a \emph{surface in} $\mathbb{E}^3$, equipped with $y_1, y_2$ as its \emph{curvilinear coordinates}. Given any point $y\in \overline{\omega}$, the vectors $\bm{a}_\alpha (y)$ span the \emph{tangent plane} to the surface $\bm{\theta} (\overline{\omega})$ at the point $\bm{\theta} (y)$, the unit vector
$$
\bm{a}_3(y) := \frac{\bm{a}_1(y) \times \bm{a}_2 (y)}{|\bm{a}_1(y) \times \bm{a}_2(y)|}
$$
is normal to $\bm{\theta} (\overline{\omega})$ at $\bm{\theta} (y)$, the three vectors $\bm{a}_i(y)$ form the \emph{covariant} basis at $\bm{\theta}(y)$, and the three vectors $\bm{a}^j(y)$ defined by the relations
$$
\bm{a}^j(y) \cdot \bm{a}_i(y) = \delta^j_i
$$
form the \emph{contravariant} basis at $\bm{\theta} (y)$; note that the vectors $\bm{a}^\beta (y)$ also span the tangent plane to $\bm{\theta} (\overline{\omega})$ at $\bm{\theta} (y)$ and that $\bm{a}^3(y) = \bm{a}_3 (y)$.

The \emph{first fundamental form} of the surface $\bm{\theta} (\overline{\omega})$ is then defined by means of its \emph{covariant components}
\[
a_{\alpha \beta} := \bm{a}_\alpha \cdot \bm{a}_\beta = a_{\beta \alpha} \in \mathcal{C}^0 (\overline{\omega}),
\]
or by means of its \emph{contravariant components}
\[
a^{\alpha \beta}:= \bm{a}^\alpha \cdot \bm{a}^\beta = a^{\beta \alpha}\in \mathcal{C}^0(\overline{\omega}).
\]
Note that the symmetric matrix field $(a^{\alpha \beta})$ is then the inverse of the matrix field $(a_{\alpha \beta})$, that $\bm{a}^\beta = a^{\alpha \beta}\bm{a}_\alpha$ and $\bm{a}_\alpha = a_{\alpha \beta} \bm{a}^\beta$, and that the \emph{area element} along $\bm{\theta} (\overline{\omega})$ is given at each point $\bm{\theta} (y), \, y \in \overline{\omega}$, by $\sqrt{a(y)}\dd y$, where
\[
a := \det (a_{\alpha \beta}) \in \mathcal{C}^0 (\overline{\omega}).
\]

Given an immersion $\bm{\theta} \in \mathcal{C}^2(\overline{\omega}; \mathbb{E}^3)$, the \emph{second fundamental form} of the surface $\bm{\theta} (\overline{\omega})$ is defined by means of its \emph{covariant components}
\[
b_{\alpha \beta}:= \partial_\alpha \bm{a}_\beta \cdot \bm{a}_3 = - \bm{a}_\beta \cdot \partial_\alpha \bm{a}_3 = b_{\beta \alpha} \in \mathcal{C}^0(\overline{\omega}),
\]
or by means of its \emph{mixed components}
\[
b^\beta_\alpha := a^{\beta \sigma} b_{\alpha \sigma} \in \mathcal{C}^0(\overline{\omega}),
\]
and the \emph{Christoffel symbols} associated with the immersion $\bm{\theta}$ are defined by
\[
\Gamma^\sigma_{\alpha \beta}:= \partial_\alpha \bm{a}_\beta \cdot \bm{a}^\sigma = \Gamma^\sigma_{\beta \alpha} \in \mathcal{C}^0 (\overline{\omega}).
\]

%The \emph{Gaussian curvature} at each point $\bm{\theta} (y) , \, y \in \overline{\omega}$, of the surface $\bm{\theta}(\overline{\omega})$ is defined by
%\[
%K(y) := \frac{\det(b_{\alpha \beta}(y))}{\det(a_{\alpha \beta}(y))} = \det \left(b^\beta_\alpha(y)\right)
%\]
%(the denominator in the above relation does not vanish since $\bm{\theta}$ is assumed to be an immersion). Note that the Gaussian curvature $K(y)$ at the point $\bm{\theta}(y)$ is also equal to the inverse of the product of the two principal radii of curvature at this point.

Given an immersion
$\bm{\theta} \in \mathcal{C}^2 (\overline{\omega}; \mathbb{E}^3)$ and a
vector field $\bm{\eta} = (\eta_i) \in \mathcal{C}^1(\overline{\omega};
\mathbb{R}^3)$, the vector field
\[
\tilde{\bm{\eta}} := \eta_i \bm{a}^i
\]
can be viewed as a \emph{displacement field of the surface} $\bm{\theta} (\overline{\omega})$, thus defined by means of its \emph{covariant components} $\eta_i$ over the vectors $\bm{a}^i$ of the contravariant bases along the surface. If the norms $\left\| \eta_i \right\|_{\mathcal{C}^1(\overline{\omega})}$ are small enough, the mapping $(\bm{\theta} + \eta_i \bm{a}^i) \in \mathcal{C}^1(\overline{\omega}; \mathbb{E}^3)$ is also an immersion, so that the set $(\bm{\theta} + \eta_i \bm{a}^i) (\overline{\omega})$ is also a surface in $\mathbb{E}^3$, equipped with the same curvilinear coordinates as those of the surface $\bm{\theta} (\overline{\omega})$, called the \emph{deformed surface} corresponding to the displacement field $\tilde{\bm{\eta}} = \eta_i \bm{a}^i$. One can then define the first fundamental form of the deformed surface by means of its covariant components
\begin{align*}
  a_{\alpha \beta} (\bm{\eta}) :=& (\bm{a}_\alpha + \partial_\alpha \tilde{\bm{\eta}}) \cdot (\bm{a}_\beta + \partial_\beta \tilde{\bm{\eta}}) \\
  =& a_{\alpha \beta} + \bm{a}_\alpha \cdot \partial_\beta \tilde{\bm{\eta}} + \partial_\alpha \tilde{\bm{\eta}} \cdot \bm{a}_\beta + \partial_\alpha \tilde{\bm{\eta}} \cdot \partial_\beta \tilde{\bm{\eta}}.
\end{align*}

The \emph{linear part with  respect to} $\tilde{\bm{\eta}}$ in the difference $\dfrac12 (a_{\alpha \beta}(\bm{\eta}) - a_{\alpha \beta})$ is called the \emph{linearized change of metric tensor} associated with the displacement field $\eta_i \bm{a}^i$, the covariant components of which are thus defined by
\[
  \gamma_{\alpha \beta}(\bm{\eta}) := \dfrac12 \left( \bm{a}_\alpha \cdot \partial_\beta \tilde{\bm{\eta}} + \partial_\alpha \tilde{\bm{\eta}} \cdot \bm{a}_\beta \right) = \frac12 (\partial_\beta \eta_\alpha + \partial_\alpha \eta_\beta ) - \Gamma^\sigma_{\alpha \beta} \eta_\sigma - b_{\alpha \beta} \eta_3 = \gamma_{\beta \alpha} (\bm{\eta}).
\]

The \emph{linear part with  respect to} $\tilde{\bm{\eta}}$ in the difference $\dfrac12 (b_{\alpha \beta}(\bm{\eta}) - b_{\alpha \beta})$ is called the \emph{linearized change of curvature tensor} associated with the displacement field $\eta_i \bm{a}^i$, the covariant components of which are thus defined by
\begin{align*}
\rho_{\alpha \beta}(\bm{\eta}) &:= (\partial_{\alpha \beta}\tilde{\bm{\eta}}-\Gamma_{\alpha\beta}^\sigma \partial_\sigma \tilde{\bm{\eta}})\cdot \bm{a}_3 = \rho_{\beta \alpha}(\bm{\eta})\\
&= \partial_{\alpha \beta}\eta_3 -\Gamma_{\alpha\beta}^\sigma \partial_\sigma\eta_3 -b_\alpha^\sigma b_{\sigma\beta}\eta_3
+b_\alpha^\sigma(\partial_\beta \eta_\sigma-\Gamma_{\beta\sigma}^\tau\eta_\tau)+b_\beta^\tau(\partial_\alpha\eta_\tau-\Gamma_{\alpha\tau}^\sigma\eta_\sigma)
+(\partial_\alpha b_\beta^\tau+\Gamma_{\alpha\sigma}^\tau b_\beta^\sigma-\Gamma_{\alpha\beta}^\sigma b_\sigma^\tau)\eta_\tau.
\end{align*}
It turns out that, when a \emph{generic surface} is subjected to a displacement field $\eta_i \bm{a}^i$ whose \emph{tangential covariant components $\eta_\alpha$ vanish on a non-zero length portion of boundary of the domain $\omega$}, denoted $\gamma_0$ in the statement of the next result, the following inequality holds (this inequality plays an essential role in our convergence analysis; cf.\ the proof of Theorem \ref{t:5}). Note that the components of the displacement fields, linearized change of metric tensor and linearized change of curvature tensor appearing in the next theorem are no longer assumed to be continuously differentiable functions; they are instead to be understood in a generalized sense, since they now belong to ad hoc Lebesgue or Sobolev spaces.
Throughout the paper the symbol $\partial_{\nu}$ denotes the \emph{outer unit normal derivative operator along the boundary $\gamma$.}

\begin{theorem} \label{t:1}
 Let $\omega$ be a domain in $\mathbb{R}^2$ and let an immersion $\bm{\theta} \in \mathcal{C}^3 (\overline{\omega}; \mathbb{E}^3)$ be given. Define the space
\begin{align*}
\bm{V}_K (\omega)&:= \{\bm{\eta}=(\eta_i) \in H^1(\omega)\times H^1(\omega)\times H^2(\omega);\eta_i=\partial_{\nu}\eta_3=0 \textup{ on }\gamma_0\}.
\end{align*}
Then there exists a constant $c_0=c_0(\omega,\gamma_0,\bm{\theta})>0$ such that
\[
\left\{ \sum_\alpha \left\| \eta_\alpha \right\|^2_{H^1(\omega)} + \left\| \eta_3 \right\|^2_{H^2(\omega)}\right\}^{1/2} \le c_0 \left\{ \sum_{\alpha, \beta} \left\| \gamma_{\alpha \beta}(\bm{\eta}) \right\|_{L^2(\omega)}^2+\sum_{\alpha, \beta} \left\| \rho_{\alpha \beta}(\bm{\eta}) \right\|_{L^2(\omega)}^2\right\}^{1/2}
\]
for all $\bm{\eta} = (\eta_i) \in \bm{V}_K (\omega)$.
\qed
\end{theorem}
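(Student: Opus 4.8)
The plan is to argue by contradiction via a compactness argument, exactly as in the proof of Korn's inequality ``on a surface'' due to Ciarlet, Lods and Miara~\cite{CiaLodsMia1996}. The prerequisite is the \emph{generalized Korn inequality on a surface without boundary conditions}: since $\bm{\theta}\in\mathcal{C}^3(\overline{\omega};\mathbb{E}^3)$, there exists $c_1=c_1(\omega,\bm{\theta})>0$ such that
\[
\left\{\sum_\alpha\|\eta_\alpha\|^2_{H^1(\omega)}+\|\eta_3\|^2_{H^2(\omega)}\right\}^{1/2}\le c_1\left\{\sum_\alpha\|\eta_\alpha\|^2_{L^2(\omega)}+\|\eta_3\|^2_{H^1(\omega)}+\sum_{\alpha,\beta}\|\gamma_{\alpha\beta}(\bm{\eta})\|^2_{L^2(\omega)}+\sum_{\alpha,\beta}\|\rho_{\alpha\beta}(\bm{\eta})\|^2_{L^2(\omega)}\right\}^{1/2}
\]
for all $\bm{\eta}\in H^1(\omega)\times H^1(\omega)\times H^2(\omega)$. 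This inequality is itself obtained by inserting the explicit expressions of $\gamma_{\alpha\beta}(\bm{\eta})$ and $\rho_{\alpha\beta}(\bm{\eta})$ recorded above into the lemma of J.~L.~Lions (if $v$ and $\partial_\alpha v$ both belong to $H^{-1}(\omega)$ then $v\in L^2(\omega)$), using the $\mathcal{C}^3$ regularity of the immersion to control the Christoffel symbols and the mixed curvature components together with their first derivatives; I would simply invoke it.

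Granting this, suppose the asserted inequality were false. Then there would be a sequence $(\bm{\eta}^k)_{k\ge1}$ in $\bm{V}_K(\omega)$ with $\sum_\alpha\|\eta^k_\alpha\|^2_{H^1(\omega)}+\|\eta^k_3\|^2_{H^2(\omega)}=1$ for every $k$, while $\sum_{\alpha,\beta}\|\gamma_{\alpha\beta}(\bm{\eta}^k)\|^2_{L^2(\omega)}+\sum_{\alpha,\beta}\|\rho_{\alpha\beta}(\bm{\eta}^k)\|^2_{L^2(\omega)}\to0$. The first relation makes $(\bm{\eta}^k)$ bounded in $H^1(\omega)\times H^1(\omega)\times H^2(\omega)$, so by the Rellich--Kondrachov theorem a subsequence, still denoted $(\bm{\eta}^k)$, converges strongly in $L^2(\omega)\times L^2(\omega)\times H^1(\omega)$ to some limit $\bm{\eta}$. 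Because $\gamma_{\alpha\beta}$ and $\rho_{\alpha\beta}$ are \emph{linear} in $\bm{\eta}$, applying the generalized Korn inequality above to the differences $\bm{\eta}^k-\bm{\eta}^\ell$ and noting that $\|\bm{\eta}^k-\bm{\eta}^\ell\|_{L^2\times L^2\times H^1}\to0$ and that $\gamma_{\alpha\beta}(\bm{\eta}^k)\to0$, $\rho_{\alpha\beta}(\bm{\eta}^k)\to0$ in $L^2(\omega)$ are Cauchy, one deduces that $(\bm{\eta}^k)$ is a Cauchy sequence in $H^1(\omega)\times H^1(\omega)\times H^2(\omega)$; hence it converges strongly to $\bm{\eta}$ in that space. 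Therefore $\bm{\eta}$ satisfies $\sum_\alpha\|\eta_\alpha\|^2_{H^1(\omega)}+\|\eta_3\|^2_{H^2(\omega)}=1$, it lies in $\bm{V}_K(\omega)$ (a closed subspace, the conditions $\eta_i=\partial_\nu\eta_3=0$ on $\gamma_0$ passing to the limit by continuity of the trace operators), and, $\gamma_{\alpha\beta}$ and $\rho_{\alpha\beta}$ being continuous into $L^2(\omega)$, one has $\gamma_{\alpha\beta}(\bm{\eta})=0$ and $\rho_{\alpha\beta}(\bm{\eta})=0$ in $L^2(\omega)$ for all $\alpha,\beta$.

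It remains to invoke the \emph{infinitesimal rigid displacement lemma on a surface}: if $\bm{\eta}\in H^1(\omega)\times H^1(\omega)\times H^2(\omega)$ satisfies $\gamma_{\alpha\beta}(\bm{\eta})=0$ and $\rho_{\alpha\beta}(\bm{\eta})=0$ in $\omega$, then there exist constant vectors $\bm{a},\bm{b}\in\mathbb{E}^3$ such that $\eta_i(y)\bm{a}^i(y)=\bm{a}+\bm{b}\wedge\bm{\theta}(y)$ for almost all $y\in\overline{\omega}$. The boundary conditions $\eta_i=\partial_\nu\eta_3=0$ on $\gamma_0$, a portion of $\gamma=\partial\omega$ of strictly positive length, then force $\bm{a}=\bm{b}=\bm{0}$: the vector field $\bm{a}+\bm{b}\wedge\bm{\theta}$ and its first-order derivatives vanish along the curve $\bm{\theta}(\gamma_0)$, and since the tangent vectors $\bm{a}_\alpha$ are linearly independent on $\overline{\omega}$ this is possible only if $\bm{a}=\bm{b}=\bm{0}$. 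Hence $\bm{\eta}=\bm{0}$, contradicting $\sum_\alpha\|\eta_\alpha\|^2_{H^1(\omega)}+\|\eta_3\|^2_{H^2(\omega)}=1$, and the theorem follows.

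The crux of the matter is the infinitesimal rigid displacement lemma on a surface, i.e.\ the identification of the kernel of the pair $(\gamma_{\alpha\beta},\rho_{\alpha\beta})$ with the restrictions to $\bm{\theta}(\overline{\omega})$ of the infinitesimal rigid displacements of $\mathbb{E}^3$. Starting from $\gamma_{\alpha\beta}(\bm{\eta})=0$ (so that $\tilde{\bm{\eta}}$ is an infinitesimal isometry of the first fundamental form) and $\rho_{\alpha\beta}(\bm{\eta})=0$, one must show that $\partial_\alpha\tilde{\bm{\eta}}$ can be written in the form $\bm{c}\wedge\bm{a}_\alpha$ for a single vector field $\bm{c}$ common to $\alpha=1,2$, and then that $\bm{c}$ is constant — the latter resting on the connectedness of $\omega$, the regularity of $\bm{\theta}$, and once more on Lions' lemma to upgrade the a priori regularity of $\bm{c}$. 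The bookkeeping with the Christoffel symbols and the mixed curvature components in the explicit formula for $\rho_{\alpha\beta}(\bm{\eta})$ carries the technical weight; the compactness skeleton sketched above is routine once this rigidity lemma and the boundary-condition-free Korn inequality are in hand.
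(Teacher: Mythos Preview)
Your proof is correct and follows the classical route---the generalized Korn inequality without boundary conditions (via Lions' lemma), a contradiction/compactness argument, and the infinitesimal rigid displacement lemma on a surface. Note, however, that the paper does not actually prove this theorem: it is stated with a \qed\ and immediately attributed to Bernadou--Ciarlet~\cite{BerCia1976}, Bernadou--Ciarlet--Miara~\cite{BerCiaMia1994}, and Theorem~2.6-4 of~\cite{Ciarlet2000}. Your argument is precisely the proof given in those references, so there is nothing to compare---you have supplied what the paper merely cites.
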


The above inequality, which is due to \cite{BerCia1976} and was later on improved by \cite{BerCiaMia1994} (see also Theorem~2.6-4 of~\cite{Ciarlet2000}),  constitutes an example of a \emph{Korn inequality on a general surface}, in the sense that it provides an estimate of an appropriate norm of a displacement field defined on a surface in terms of an appropriate norm of a specific ``measure of strain'' (here, the linearized change of metric tensor and the linearized change of curvature tensor) corresponding to the displacement field considered.

\section{The three-dimensional obstacle problem for a ``general'' linearly elastic shell} \label{Sec:2}

Let $\omega$ be a domain in $\mathbb{R}^2$, let $\gamma:= \partial \omega$, and let $\gamma_0$ be a non-empty relatively open subset of $\gamma$. For each $\varepsilon > 0$, we define the sets
\[
\Omega^\varepsilon = \omega \times \left] - \varepsilon , \varepsilon \right[ \quad \textup{ and } \quad \Gamma^\varepsilon_0 := \gamma_0 \times \left[ - \varepsilon , \varepsilon \right],
\]
we let $x^\varepsilon = (x^\varepsilon_i)$ designate a generic point in the set $\overline{\Omega^\varepsilon}$, and we let $\partial^\varepsilon_i := \partial / \partial x^\varepsilon_i$. Hence we also have $x^\varepsilon_\alpha = y_\alpha$ and $\partial^\varepsilon_\alpha = \partial_\alpha$.

Given an immersion $\bm{\theta} \in \mathcal{C}^3(\overline{\omega}; \mathbb{E}^3)$ and $\varepsilon > 0$, consider a \emph{shell} with \emph{middle surface} $\bm{\theta} (\overline{\omega})$ and with \emph{constant thickness} $2 \varepsilon$. This means that the \emph{reference configuration} of the shell is the set $\bm{\Theta} (\overline{\Omega^\varepsilon})$, where the mapping $\bm{\Theta} : \overline{\Omega^\varepsilon} \to \mathbb{E}^3$ is defined by
\[
\bm{\Theta} (x^\varepsilon) := \bm{\theta} (y) + x^\varepsilon_3 \bm{a}^3(y) \text{ at each point } x^\varepsilon = (y, x^\varepsilon_3) \in \overline{\Omega^\varepsilon}.
\]

One can then show (cf., e.g., Theorem 3.1-1 of~\cite{Ciarlet2000}) that, if $\varepsilon > 0$ is small enough, such a mapping $\bm{\Theta} \in \mathcal{C}^2(\overline{\Omega^\varepsilon}; \mathbb{E}^3)$ is an \emph{immersion}, in the sense that the three vectors
\[
\bm{g}^\varepsilon_i (x^\varepsilon) := \partial^\varepsilon_i \bm{\Theta} (x^\varepsilon),
\]
are linearly independent at each point $x^\varepsilon \in \overline{\Omega^\varepsilon}$; these vectors  then constitute the \emph{covariant basis} at the point $\bm{\Theta} (x^\varepsilon)$, while the three vectors $\bm{g}^{j, \varepsilon} (x^\varepsilon)$ defined by the relations
\[
\bm{g}^{j, \varepsilon} (x^\varepsilon) \cdot \bm{g}^\varepsilon_i (x^\varepsilon) = \delta^j_i,
\]
constitute the \emph{contravariant basis} at the same point. It will be implicitly assumed in the sequel that $\varepsilon > 0$ \emph{is small enough so that $\bm{\Theta}:\overline{\Omega^\varepsilon} \to \mathbb{E}^3$} is an \emph{immersion}.

One then defines the \emph{metric tensor associated with the immersion} $\bm{\Theta}$ by means of its \emph{covariant components}
\[
g^\varepsilon_{ij} := \bm{g}^\varepsilon_i \cdot \bm{g}^\varepsilon_j \in \mathcal{C}^1(\overline{\Omega^\varepsilon}),
\]
or by means of its \emph{contravariant components}
\[
g^{ij, \varepsilon} := \bm{g}^{i, \varepsilon} \cdot \bm{g}^{i,\varepsilon} \in \mathcal{C}^1(\overline{\Omega^\varepsilon}).
\]

Note that the symmetric matrix field $(g^{ij, \varepsilon})$ is then the inverse of the matrix field $(g^\varepsilon_{ij})$, that $\bm{g}^{j, \varepsilon} = g^{ij, \varepsilon} \bm{g}^\varepsilon_i$ and $g^\varepsilon_i = g^\varepsilon_{ij} \bm{g}^{j, \varepsilon}$, and that the \emph{volume element} in $\bm{\Theta} (\overline{\Omega^\varepsilon})$ is given at each point $\bm{\Theta} (x^\varepsilon)$, $x^\varepsilon \in \overline{\Omega^\varepsilon}$, by $\sqrt{g^\varepsilon (x^\varepsilon)} \dd x^\varepsilon$, where
\[
g^\varepsilon := \det (g^\varepsilon_{ij}) \in \mathcal{C}^1(\overline{\Omega^\varepsilon}).
\]

One also defines the \emph{Christoffel symbols} associated with the immersion $\bm{\Theta}$ by
\[
\Gamma^{p, \varepsilon}_{ij}:= \partial_i \bm{g}^\varepsilon_j \cdot \bm{g}^{p, \varepsilon} = \Gamma^{p, \varepsilon}_{ji} \in \mathcal{C}^0(\overline{\Omega^\varepsilon}).
\]
Note that $\Gamma^{3,\varepsilon}_{\alpha 3} = \Gamma^{p, \varepsilon}_{33} = 0$.

Given a vector field $\bm{v}^\varepsilon = (v^\varepsilon_i) \in \mathcal{C}^1 (\overline{\Omega^\varepsilon}; \mathbb{R}^3)$, the associated vector field
\[
\tilde{\bm{v}}^\varepsilon := v^\varepsilon_i \bm{g}^{i, \varepsilon},
\]
can be viewed as a \emph{displacement field} of the reference configuration $\bm{\Theta} (\overline{\Omega^\varepsilon})$ of the shell, thus defined by means of its \emph{covariant components} $v^ \varepsilon_i$ over the vectors $\bm{g}^{i, \varepsilon}$ of the contravariant bases in the reference configuration.

If the norms $\left\| v^\varepsilon_i \right\|_{\mathcal{C}^1 (\overline{\Omega^\varepsilon})}$ are small enough, the mapping $(\bm{\Theta} + v^\varepsilon_i \bm{g}^{i, \varepsilon})$ is also an immersion, so that one can also define the metric tensor of the \emph{deformed configuration} $(\bm{\Theta} + v^\varepsilon_i \bm{g}^{i, \varepsilon})(\overline{\Omega^\varepsilon})$ by means of its covariant components
\begin{align*}
  g^\varepsilon_{ij} (v^\varepsilon) := (\bm{g}^\varepsilon_i + \partial^\varepsilon_i \tilde{\bm{v}}^\varepsilon ) \cdot (\bm{g}^\varepsilon_j + \partial^\varepsilon_j \tilde{\bm{v}}^\varepsilon)
  = g^\varepsilon_{ij} + \bm{g}^\varepsilon_i \cdot \partial_j \tilde{\bm{v}}^\varepsilon + \partial^\varepsilon_i \tilde{\bm{v}}^\varepsilon \cdot \bm{g}^\varepsilon_j + \partial_i \tilde{\bm{v}}^\varepsilon \cdot \partial_j \tilde{\bm{v}}^\varepsilon.
\end{align*}

The linear part with respect to $\tilde{\bm{v}}^\varepsilon$ in the difference $\dfrac12(g^\varepsilon_{ij} (\bm{v}^\varepsilon) - g^\varepsilon_{ij})$ is then called the \emph{linearized strain tensor} associated with the displacement field $v^\varepsilon_i \bm{g}^{i, \varepsilon}$, the covariant components of which are thus defined by
\[
e^\varepsilon_{i\|j} (\bm{v}^\varepsilon) := \frac12 \left( \bm{g}^\varepsilon_i \cdot \partial_j^\varepsilon \tilde{\bm{v}}^\varepsilon + \partial^\varepsilon_i \tilde{\bm{v}}^\varepsilon \cdot \bm{g}^\varepsilon_j \right) = \frac12 (\partial^\varepsilon_j v^\varepsilon_i + \partial^\varepsilon_i v^\varepsilon_j) - \Gamma^{p, \varepsilon}_{ij} v^\varepsilon_p = e_{j\|i}^\varepsilon (\bm{v}^\varepsilon).
\]
The functions $e^\varepsilon_{i\|j} (\bm{v}^\varepsilon)$ are called the \emph{linearized strains in curvilinear coordinates} associated with the displacement field $v^\varepsilon_i \bm{g}^{i, \varepsilon}$.

We assume throughout this paper that, for each $\varepsilon > 0$, the reference configuration $\bm{\Theta} (\overline{\Omega^\varepsilon})$ of the shell is a \emph{natural state} (i.e., stress-free) and that the material constituting the shell is \emph{homogeneous}, \emph{isotropic}, and \emph{linearly elastic}. The behavior of such an elastic material is thus entirely governed by its two \emph{Lam\'{e} constants} $\lambda \ge 0$ and $\mu > 0$ (for details, see, e.g., Section~3.8 of~\cite{Ciarlet1988}).

We will also assume that the shell is subjected to \emph{applied body forces} whose density per unit volume is defined by means of its covariant components $f^{i, \varepsilon} \in L^2(\Omega^\varepsilon)$, and to a \emph{homogeneous boundary condition of place} along the portion $\Gamma^\varepsilon_0$ of its lateral face (i.e., the displacement vanishes on $\Gamma^\varepsilon_0$).

For what concerns surface traction forces, the mathematical models characterized by the confinement condition considered in this paper (confinement condition which is also considered in~\cite{LegMia2008} in a more amenable geometrical framework) do not take any surface traction forces into account. 
Indeed, there could be no surface traction forces applied to the portion of the three-dimensional shell boundary that engages contact with the obstacle. 

The confinement condition considered in this paper is more suitable in the context of multi-scales multi-bodies problems like, for instance, the study of the motion of the human heart valves, conducted by Quarteroni and his associates in~\cite{Quarteroni2021-3,Quarteroni2021-2,Quarteroni2021-1} and the references therein.

In this paper we consider a specific \emph{obstacle problem} for such a shell, in the sense that the shell is also subjected to a \emph{confinement condition}, expressing that any \emph{admissible displacement vector field} $v^\varepsilon_i \bm{g}^{i, \varepsilon}$ must be such that all the points of the corresponding deformed configuration remain in a \emph{half-space} of the form
\[
\mathbb{H} := \{ x \in \mathbb{E}^3; \, \boldsymbol{Ox} \cdot \bm{q} \ge 0\},
\]
where $\bm{q}$ is a \emph{nonzero vector} given once and for all. In other words, any admissible displacement field must satisfy
\[
\left( \bm{\Theta} (x^\varepsilon) + v^\varepsilon_i (x^\varepsilon) \bm{g}^{i, \varepsilon} (x^\varepsilon ) \right) \cdot \bm{q} \ge 0,
\]
for all $x^\varepsilon \in \overline{\Omega^\varepsilon}$, or possibly only for almost all (a.a. in what follows) $x^\varepsilon \in \Omega^\varepsilon$ when the covariant components $v^\varepsilon_i$ are required to belong to the Sobolev space $H^1(\Omega^\varepsilon)$ as in Theorem \ref{t:2} below.

We will of course assume that the reference configuration satisfies the confinement condition, i.e., that
\[
\bm{\Theta} (\overline{\Omega^\varepsilon}) \subset \mathbb{H}.
\]

It is to be emphasized that the above confinement condition \emph{considerably departs} from the usual \emph{Signorini condition} favored by most authors, who usually require that only the points of the undeformed and deformed ``lower face'' $\omega \times \{-\varepsilon\}$ of the reference configuration satisfy the confinement condition (see, e.g., \cite{LegMia2008}, \cite{LegMia2018}, \cite{Rodri2018}). Clearly, the confinement condition considered in the present paper, which is inspired by the formulation proposed by Br\'ezis \& Stampacchia~\cite{BrezisStampacchia1968}, is more physically realistic, since a Signorini condition imposed only on the lower face of the reference configuration does not prevent -- at least ``mathematically'' -- other points of the deformed reference configuration to ``cross'' the plane  $\{ x \in \mathbb{E}^3; \; \bm{Ox} \cdot \bm{q} = 0\}$ and then to end up on the ``other side'' of this plane (cf., e.g., Chapter~63 in~\cite{Zeidler1986}). It is evident that the vector $\bm{q}$ is thus orthogonal to the plane associated with the half-space where the linearly elastic shell is required to remain confined.

Such a confinement condition renders the asymptotic analysis considerably more difficult, however, as the constraint now bears on a vector field, the displacement vector field of the reference configuration, instead of on only a single component of this field.

The mathematical modeling of such an \emph{obstacle problem for a linearly elastic shell} is then clear; since, \emph{apart from} the confinement condition, the rest, i.e., the \emph{function space} and the expression of the quadratic \emph{energy} $J^\varepsilon$, is classical (see, e.g.~\cite{Ciarlet2000}). More specifically, let
\[
A^{ijk\ell, \varepsilon} := \lambda g^{ij, \varepsilon} g^{k\ell, \varepsilon} + \mu \left( g^{ik, \varepsilon} g^{j\ell, \varepsilon} + g^{i\ell, \varepsilon} g^{jk, \varepsilon} \right) =
  A^{jik\ell, \varepsilon} =  A^{k\ell ij, \varepsilon},
\]
denote the contravariant components of the \emph{elasticity tensor} of the elastic material constituting the shell. Then the unknown of the problem, which is the vector field $\bm{u}^\varepsilon = (u^\varepsilon_i)$ where the functions $u^\varepsilon_i : \overline{\Omega^\varepsilon} \to \mathbb{R}$ are the three covariant components of the unknown ``three-dimensional'' displacement vector field $u^\varepsilon_i \bm{g}^{i, \varepsilon}$ of the reference configuration of the shell, should minimize the \emph{energy} $J^\varepsilon : \bm{H}^1(\Omega^\varepsilon) \to \mathbb{R}$ defined by
\[
J^\varepsilon (\bm{v}^\varepsilon) := \frac12 \int_{\Omega^\varepsilon} A^{ijk\ell, \varepsilon} e^\varepsilon_{k\| \ell}  (\bm{v}^\varepsilon)e^\varepsilon_{i\|j} (\bm{v}^\varepsilon) \sqrt{g^\varepsilon} \dd x^\varepsilon - \int_{\Omega^\varepsilon} f^{i, \varepsilon} v^\varepsilon_i \sqrt{g^\varepsilon} \dd x^\varepsilon,
\]
for each $\bm{v}^\varepsilon = (v^\varepsilon_i) \in \bm{H}^1(\Omega^\varepsilon)$
over the \emph{set of admissible displacements} defined by:
$$
 \bm{U} (\Omega^\varepsilon  ) := \{ \bm{v}^\varepsilon = (v^\varepsilon_i) \in \bm{H}^1 (\Omega^\varepsilon) ; \; \bm{v}^\varepsilon = \textbf{0} \text{ on } \Gamma^\varepsilon_0, ( \bm{\Theta} (x^\varepsilon) + v^\varepsilon_i (x^\varepsilon) \bm{g}^{i, \varepsilon} (x^\varepsilon)) \cdot \bm{q} \ge 0 \textup{ for a.a. } x^\varepsilon \in \Omega^\varepsilon  \}.
$$

The solution to this \emph{minimization problem} exists and is unique, and it can be also characterized as the unique solution of the following problem:

\begin{customprob}{$\mathcal{P}(\Omega^\varepsilon)$}\label{problem0}
	Find $\bm{u}^\varepsilon =(u_i^\varepsilon)\in \bm{U}(\Omega^\varepsilon)$ that satisfies the following variational inequalities:
	$$
	\int_{\Omega^\varepsilon}
	A^{ijk\ell, \varepsilon} e^\varepsilon_{k\| \ell}  (\bm{u}^\varepsilon)
	\left( e^\varepsilon_{i\| j}  (\bm{v}^\varepsilon) -  e^\varepsilon_{i\| j}  (\bm{u}^\varepsilon)  \right) \sqrt{g^\varepsilon} \dd x^\varepsilon \ge \int_{\Omega^\varepsilon} f^{i , \varepsilon} (v^\varepsilon_i - u^\varepsilon_i)\sqrt{g^\varepsilon} \dd x^\varepsilon,
	$$
	for all $\bm{v}^\varepsilon = (v^\varepsilon_i) \in \bm{U}(\Omega^\varepsilon)$.
	\bqed	
\end{customprob}

The following result can be thus straightforwardly proved.

\begin{theorem} 
\label{t:2}
 The quadratic minimization problem: Find a vector field $\bm{u}^\varepsilon \in \bm{U} (\Omega^\varepsilon)$ such that
$$
J^\varepsilon (\bm{u}^\varepsilon) = \inf_{\bm{v}^\varepsilon \in \bm{U} (\Omega^\varepsilon)} J^\varepsilon (\bm{v}^\varepsilon),
$$
has one and only one solution. Besides, $\bm{u}^\varepsilon$ is also the unique solution of Problem~\ref{problem0}.
\end{theorem}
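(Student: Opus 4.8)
The plan is to recognize Theorem~\ref{t:2} as a textbook instance of the theory of elliptic variational inequalities of the first kind (Lions--Stampacchia), applied to the quadratic functional $J^\varepsilon$ over the convex set $\bm{U}(\Omega^\varepsilon)$. First I would check that $\bm{U}(\Omega^\varepsilon)$ is a nonempty, closed, convex subset of $\bm{H}^1(\Omega^\varepsilon)$: nonemptiness follows since the reference configuration itself satisfies the confinement condition, so $\bm{v}^\varepsilon = \bm{0}$ belongs to $\bm{U}(\Omega^\varepsilon)$; convexity is immediate because the confinement constraint $(\bm{\Theta}(x^\varepsilon) + v^\varepsilon_i(x^\varepsilon)\bm{g}^{i,\varepsilon}(x^\varepsilon))\cdot\bm{q} \ge 0$ is affine in $\bm{v}^\varepsilon$ and the boundary condition on $\Gamma^\varepsilon_0$ is linear; closedness follows from the continuity of the trace operator onto $\Gamma^\varepsilon_0$ together with the continuity (indeed boundedness) of the linear functional $\bm{v}^\varepsilon \mapsto v^\varepsilon_i(\cdot)\bm{g}^{i,\varepsilon}(\cdot)\cdot\bm{q}$ from $\bm{H}^1(\Omega^\varepsilon)$ into $L^2(\Omega^\varepsilon)$, since the components $\bm{g}^{i,\varepsilon}$ are of class $\mathcal{C}^1(\overline{\Omega^\varepsilon})$.

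Next I would analyze the bilinear form $B^\varepsilon(\bm{u}^\varepsilon,\bm{v}^\varepsilon) := \int_{\Omega^\varepsilon} A^{ijk\ell,\varepsilon} e^\varepsilon_{k\|\ell}(\bm{u}^\varepsilon) e^\varepsilon_{i\|j}(\bm{v}^\varepsilon)\sqrt{g^\varepsilon}\dd x^\varepsilon$ and the linear form $L^\varepsilon(\bm{v}^\varepsilon) := \int_{\Omega^\varepsilon} f^{i,\varepsilon} v^\varepsilon_i \sqrt{g^\varepsilon}\dd x^\varepsilon$, so that $J^\varepsilon(\bm{v}^\varepsilon) = \tfrac12 B^\varepsilon(\bm{v}^\varepsilon,\bm{v}^\varepsilon) - L^\varepsilon(\bm{v}^\varepsilon)$. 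Continuity of $B^\varepsilon$ on $\bm{H}^1(\Omega^\varepsilon)$ follows from the boundedness of $A^{ijk\ell,\varepsilon}$, $\sqrt{g^\varepsilon}$ and the Christoffel symbols on $\overline{\Omega^\varepsilon}$, which makes $\bm{v}^\varepsilon\mapsto e^\varepsilon_{i\|j}(\bm{v}^\varepsilon)$ continuous from $\bm{H}^1(\Omega^\varepsilon)$ into $L^2(\Omega^\varepsilon)$; continuity of $L^\varepsilon$ is clear since $f^{i,\varepsilon}\in L^2(\Omega^\varepsilon)$. For coercivity over the space $\{\bm{v}^\varepsilon\in\bm{H}^1(\Omega^\varepsilon);\bm{v}^\varepsilon=\bm 0 \text{ on }\Gamma^\varepsilon_0\}$, I would invoke the standard two ingredients: the uniform positive-definiteness of the three-dimensional elasticity tensor, $A^{ijk\ell,\varepsilon}t_{k\ell}t_{ij}\ge C\sum_{i,j}|t_{ij}|^2$ for all symmetric $(t_{ij})$ with $C>0$ depending on $\mu$ and the geometry, and the three-dimensional Korn inequality in curvilinear coordinates on $\Omega^\varepsilon$ with boundary condition on $\Gamma^\varepsilon_0$ (cf.\ Theorem 3.8-3 of~\cite{Ciarlet2000}), which gives $\sum_{i,j}\|e^\varepsilon_{i\|j}(\bm{v}^\varepsilon)\|^2_{L^2(\Omega^\varepsilon)}\ge c\|\bm{v}^\varepsilon\|^2_{\bm{H}^1(\Omega^\varepsilon)}$. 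Combining these yields $B^\varepsilon(\bm{v}^\varepsilon,\bm{v}^\varepsilon)\ge \alpha\|\bm{v}^\varepsilon\|^2_{\bm{H}^1(\Omega^\varepsilon)}$ for some $\alpha>0$.

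With $B^\varepsilon$ symmetric, continuous and coercive, $L^\varepsilon$ continuous, and $\bm{U}(\Omega^\varepsilon)$ nonempty closed convex, the classical Lions--Stampacchia theorem (equivalently, the direct method of the calculus of variations: $J^\varepsilon$ is strictly convex, continuous and coercive on $\bm{U}(\Omega^\varepsilon)$, hence attains its infimum at a unique point) gives existence and uniqueness of the minimizer $\bm{u}^\varepsilon\in\bm{U}(\Omega^\varepsilon)$. The equivalence with Problem~\ref{problem0} is the standard first-order characterization of a constrained minimizer of a differentiable convex functional over a convex set: writing the Gateaux derivative of $J^\varepsilon$ at $\bm{u}^\varepsilon$ in the admissible direction $\bm{v}^\varepsilon-\bm{u}^\varepsilon$ (which is feasible since $\bm{U}(\Omega^\varepsilon)$ is convex) and using $\tfrac{d}{dt}J^\varepsilon(\bm{u}^\varepsilon+t(\bm{v}^\varepsilon-\bm{u}^\varepsilon))|_{t=0^+}\ge 0$ produces exactly the stated variational inequalities, and conversely the inequalities plus convexity force $\bm{u}^\varepsilon$ to be the global minimizer.

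I do not expect any genuine obstacle here; the only points requiring a word of care are the closedness of $\bm{U}(\Omega^\varepsilon)$ (which hinges on the $\mathcal{C}^1$ regularity of the contravariant basis, already guaranteed because $\bm{\Theta}\in\mathcal{C}^2(\overline{\Omega^\varepsilon};\mathbb{E}^3)$ is an immersion for $\varepsilon$ small) and the invocation of the three-dimensional Korn inequality in curvilinear coordinates, which is where the boundary condition on $\Gamma^\varepsilon_0$ and the fact that $\gamma_0$ has positive length enter. Everything else is a routine verification of the hypotheses of the Lions--Stampacchia framework, which is why the result is stated as being ``straightforwardly proved.''
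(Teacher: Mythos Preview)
Your proposal is correct and follows essentially the same approach as the paper: verify that $\bm{U}(\Omega^\varepsilon)$ is nonempty, closed, and convex, that the bilinear form is continuous and $\bm{V}(\Omega^\varepsilon)$-elliptic via the uniform positive-definiteness of $(A^{ijk\ell,\varepsilon})$ and Korn's inequality in curvilinear coordinates, and that the linear form is continuous, then invoke the Lions--Stampacchia theorem. The only cosmetic difference is that the paper argues closedness via a pointwise a.e.\ convergent subsequence, whereas you use continuity of $\bm{v}^\varepsilon \mapsto v^\varepsilon_i\,\bm{g}^{i,\varepsilon}\cdot\bm{q}$ into $L^2(\Omega^\varepsilon)$; both are routine.
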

\begin{proof}
Define the space
\[
\bm{V}(\Omega^\varepsilon) := \{\bm{v}^\varepsilon = (v^\varepsilon_i) \in \bm{H}^1(\Omega^\varepsilon) ; \; \bm{v}^\varepsilon = \textbf{0} \text{ on } \Gamma^\varepsilon_0\}.
\]
Then, thanks to the uniform positive-definiteness of the elasticity tensor $(A^{ijk\ell, \varepsilon})$ \cite{Ciarlet1988}, and to the boundary condition of place satisfied on $\Gamma^\varepsilon_0 = \gamma_0 \times \left[- \varepsilon, \varepsilon\right] $ (recall that $\lambda \ge 0, \, \mu > 0$, and that $\gamma_0$ is a non-empty relatively open subset of $\gamma=\partial \omega$), it can be shown (see Theorems~3.8-3 and~3.9-1 of~\cite{Ciarlet2005}) that the continuous and symmetric bilinear form
\[
(\bm{v}^\varepsilon, \bm{w}^\varepsilon) \in \bm{H}^1(\Omega^\varepsilon)\times \bm{H}^1(\Omega^\varepsilon) \mapsto \int_{\Omega^\varepsilon} A^{ijk\ell, \varepsilon} e^\varepsilon_{k\| \ell} (\bm{v}^\varepsilon) e^\varepsilon_{i\| j}(\bm{w}^\varepsilon) \sqrt{g^\varepsilon} \dd x^\varepsilon,
\]
is $\bm{V} (\Omega^\varepsilon)$-elliptic; besides, the linear form
\[
\bm{v}^\varepsilon \in \bm{H}^1(\Omega^\varepsilon) \mapsto \int_{\Omega^\varepsilon} f^{i, \varepsilon} v^\varepsilon_i \sqrt{g^\varepsilon} \dd x^\varepsilon,
\]
is clearly continuous. Finally, the set $\bm{U}(\Omega^\varepsilon)$ is nonempty (by assumption), closed in $\bm{H}^1(\Omega^\varepsilon)$ (any convergent sequence in $\bm{V}(\Omega^\varepsilon)$ contains a subsequence that pointwise converges almost everywhere to its limit), and convex (as is immediately verified).

The existence and uniqueness of the solution to the  minimization problem and its characterization by means of variational inequalities is then classical (see, e.g., \cite{PGCLNFAA}, \cite{DuvLio76} or~\cite{Glow84}). 
\end{proof}

Since $\bm{\theta} (\overline{\omega}) \subset \bm{\Theta} (\overline{\Omega^\varepsilon})$, it evidently follows that $\bm{\theta} (y) \cdot \bm{q} \ge 0$ for all $y \in \overline{\omega}$. But in fact, a stronger property holds (cf. Lemma~2.1 of~\cite{CiaMarPie2018}).

\begin{lemma} \label{lem:1}
  Let $\omega$ be a domain in $\mathbb{R}^2$, let $\bm{\theta} \in \mathcal{C}^1(\overline{\omega}; \mathbb{E}^3)$ be an immersion, let $\bm{q} \in \mathbb{E}^3$ be a non-zero vector, and let $\varepsilon > 0$. Then the inclusion
  \[
\bm{\Theta} (\overline{\Omega^\varepsilon} ) \subset \mathbb{H} = \{ x \in \mathbb{E}^3; \; \boldsymbol{Ox} \cdot \bm{q} \ge 0\}
\]
implies that
\[
 \inf_{y \in \overline{\omega}} (\bm{\theta} (y) \cdot \bm{q}) > 0.
\]
\qed
\end{lemma}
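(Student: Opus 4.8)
The plan is to argue by contradiction, combining three ingredients: the compactness of $\overline{\omega}$, the information carried by the two transverse slices $x^\varepsilon_3=\pm\varepsilon$ of the confinement condition, and the fact that $\bm{\theta}$ is an immersion, so that the covariant basis $\bm{a}_1(y),\bm{a}_2(y),\bm{a}_3(y)$ spans $\mathbb{E}^3$ at every point. Since $\bm{q}\neq\bm{0}$, it will suffice to show that, \emph{if} $\bm{\theta}\cdot\bm{q}$ vanished somewhere on $\overline{\omega}$, then $\bm{q}$ would have to be orthogonal to such a basis at that point.

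Concretely, the continuous function $\phi\colon y\mapsto\bm{\theta}(y)\cdot\bm{q}$ attains its infimum $m$ on the compact set $\overline{\omega}$, say at $y_0$; since $\bm{\theta}(\overline{\omega})=\bm{\Theta}(\overline{\omega}\times\{0\})\subset\bm{\Theta}(\overline{\Omega^\varepsilon})\subset\mathbb{H}$ we have $m\ge 0$, and the claim is exactly $m>0$. Suppose $m=0$, i.e. $\phi(y_0)=0$. Evaluating the confinement inequality at the two points $(y_0,\pm\varepsilon)\in\overline{\Omega^\varepsilon}$ and using $\bm{\Theta}(y_0,\pm\varepsilon)=\bm{\theta}(y_0)\pm\varepsilon\,\bm{a}^3(y_0)$ together with $\phi(y_0)=0$ gives $\pm\varepsilon\,\bm{a}^3(y_0)\cdot\bm{q}\ge 0$, whence $\bm{a}^3(y_0)\cdot\bm{q}=0$: the vector $\bm{q}$ is tangent to the surface at $\bm{\theta}(y_0)$. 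It then remains to obtain the two tangential relations $\bm{a}_\alpha(y_0)\cdot\bm{q}=\partial_\alpha\phi(y_0)=0$. If $y_0$ lies in the open set $\omega$, this is immediate from Fermat's rule, $\phi\in\mathcal{C}^1(\overline{\omega})$ having an interior minimum there; then $\bm{q}\cdot\bm{a}_i(y_0)=0$ for $i=1,2,3$, and since $\bm{\theta}$ is an immersion this forces $\bm{q}=\bm{0}$, a contradiction. Furthermore, the infimum is in fact attained in the interior in a weak sense: $\bm{\Theta}$ restricted to the open set $\Omega^\varepsilon$ is an immersion of a $3$-manifold into $\mathbb{E}^3$, hence a local diffeomorphism, so $\bm{\Theta}(\Omega^\varepsilon)$ is open in $\mathbb{E}^3$; being contained in $\mathbb{H}$ it is contained in the open half-space $\{x\in\mathbb{E}^3;\,\bm{O}x\cdot\bm{q}>0\}$, and therefore $\phi>0$ throughout $\omega$.

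The remaining, and genuinely delicate, possibility is that $m=0$ with the minimum attained only at boundary points $y_0\in\gamma=\partial\omega$, where Fermat's rule no longer applies because the minimum is one‑sided. The plan here, following the argument of Lemma~2.1 of \cite{CiaMarPie2018}, is to use that $\omega$ lies locally on a single side of $\gamma$: approaching $y_0$ from inside $\omega$ along the admissible inward directions shows that $\nabla\phi(y_0)$ can only be a nonnegative multiple of the inner normal to $\gamma$ at $y_0$, and one then revisits the transverse information --- now in the stronger form that the two ``faces'' $\bm{\theta}(\overline{\omega})\pm\varepsilon\,\bm{a}^3(\overline{\omega})$ also lie in $\mathbb{H}$ --- to upgrade this to $\nabla\phi(y_0)=\bm{0}$, after which the basis argument above closes the proof. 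This boundary step is the main obstacle; the interior estimate and the transverse slicing are routine, and at the write‑up stage one should check that the regularity of $\gamma$ assumed for a ``domain'' suffices to carry it out (or else import whatever additional regularity of $\gamma$ the reference uses).
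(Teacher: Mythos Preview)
The paper gives no proof of its own; it merely cites Lemma~2.1 of \cite{CiaMarPie2018}. Your interior argument is fine (though the open-mapping step needs $\bm{\Theta}\in\mathcal{C}^1$, hence $\bm{\theta}\in\mathcal{C}^2$, which is stronger than the $\mathcal{C}^1$ stated in the lemma). The genuine gap is the boundary case, which you correctly flag as ``the main obstacle'' but then leave as a sketch: you assert that the face information can ``upgrade'' the one-sided condition $\nabla\phi(y_0)\cdot d\ge 0$ for inward $d$ to $\nabla\phi(y_0)=\bm{0}$, without saying how.

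In fact this upgrade is impossible, because the lemma \emph{as stated here} is false. Take $\omega=(0,1)^2$, $\bm{\theta}(y_1,y_2)=(y_1,y_2,0)$, $\bm{q}=(1,0,0)$. Then $\bm{a}_3\equiv(0,0,1)$, $\bm{\Theta}(y,t)=(y_1,y_2,t)$, and $\bm{\Theta}(\overline{\Omega^\varepsilon})=[0,1]^2\times[-\varepsilon,\varepsilon]\subset\{x_1\ge 0\}=\mathbb{H}$, yet $\inf_{\overline{\omega}}(\bm{\theta}\cdot\bm{q})=\inf_{[0,1]^2}y_1=0$. At the boundary minimizer $y_0=(0,\tfrac12)$ one has $\bm{a}_3(y_0)\cdot\bm{q}=0$ but $\nabla\phi(y_0)=(1,0)\neq\bm{0}$, pointing strictly inward; the two face constraints $\phi\pm\varepsilon\,\bm{a}_3\cdot\bm{q}\ge 0$ add nothing because $\bm{a}_3\cdot\bm{q}\equiv 0$ here. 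So no argument along your lines can close the boundary case. Presumably the original statement in \cite{CiaMarPie2018} carries an extra hypothesis---for instance that $\bm{\theta}(\gamma)$ lies in the open half-space, or that the reference configuration does not touch $\partial\mathbb{H}$---which has been dropped in this restatement; you should consult the source before investing more effort.
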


We now consider the ``penalized'' version of Problem~\ref{problem0}. One such penalization transforms the set of variational inequalities in Problem~\ref{problem0} into a set of nonlinear variational equations, where the nonlinearity is defined in terms of the measure of the ``violation'' of the constraint.
Let $\kappa>0$ denote a \emph{penalty parameter}. The ``penalized'' variational formulation corresponding to Problem~\ref{problem0} takes the following form:

\begin{customprob}{$\mathcal{P}_{\kappa}(\Omega^\varepsilon)$}\label{problem1}
	Find $\bm{u}_{\kappa}^\varepsilon =(u_{i,\kappa}^\varepsilon) \in \bm{V}(\Omega^\varepsilon)$ that satisfies the following variational equations:
	\begin{align*}
	\int_{\Omega^\varepsilon} A^{ijk\ell, \varepsilon} e^\varepsilon_{k\| \ell}  (\bm{u}_\kappa^\varepsilon) e^\varepsilon_{i\| j}(\bm{v}^\varepsilon) \sqrt{g^\varepsilon} \dd x^\varepsilon
	-\dfrac{\varepsilon}{\kappa}\int_{\Omega^\varepsilon}\dfrac{\left\{[\bm{\Theta}+u_{j,\kappa}^\varepsilon \bm{g}^{j,\varepsilon}]\cdot \bm{q}\right\}^{-}}{\sqrt{\sum_{\ell=1}^{3}|\bm{g}^{\ell,\varepsilon}\cdot\bm{q}|^2}} (v_i^\varepsilon \bm{g}^{i,\varepsilon}\cdot \bm{q}) \sqrt{g^\varepsilon} \dd x^\varepsilon
	= \int_{\Omega^\varepsilon} f^{i , \varepsilon} v^\varepsilon_i\sqrt{g^\varepsilon} \dd x^\varepsilon,
	\end{align*}
	for all $\bm{v}^\varepsilon = (v^\varepsilon_i) \in \bm{V}(\Omega^\varepsilon)$.
	\bqed	
\end{customprob}

Note that the penalty term corresponds to the operator $\bm{\beta}^\varepsilon:\bm{L}^2(\Omega^\varepsilon) \to \bm{L}^2(\Omega^\varepsilon)$ defined by:
\begin{equation}
\label{betaeps}
\bm{\beta}^\varepsilon(\bm{v}):=\left(-\{(\bm{\Theta}+v_j \bm{g}^{j,\varepsilon})\cdot\bm{q}\}^{-}
\dfrac{\bm{g}^{i,\varepsilon}\cdot\bm{q}}{\sqrt{\sum_{\ell=1}^{3}|\bm{g}^{\ell,\varepsilon}\cdot\bm{q}|^2}}\right)_{i=1}^3,\quad\textup{ for all }\bm{v}=(v_i) \in \bm{L}^2(\Omega^\varepsilon).
\end{equation}

Note that the denominator appearing in the definition of $\bm{\beta}^\varepsilon$ is always positive, since the three vector fields $\bm{g}^{i,\varepsilon}$ are linearly independent at each $x^\varepsilon \in \overline{\Omega^\varepsilon}$.

In order to show the existence and uniqueness of solution for Problem~\ref{problem1}, we have to show that the operator $\bm{\beta}^\varepsilon$ is monotone. The following lemma, whose proof can be found, for instance, in~\cite{PieTem2023} serves for this purpose.

\begin{lemma}
	\label{lem:3}
	Let $\Omega \subset \mathbb{R}^n$, with $n\ge 1$ an integer, be an open set. The operator $-\{\cdot\}:L^2(\Omega) \to L^2(\Omega)$ defined by
	$$
	f \in L^2(\Omega) \mapsto -\{f\}^{-}:=\min\{f,0\} \in L^2(\Omega),
	$$
	is monotone, bounded and Lipschitz continuous with Lipschitz constant equal to $1$.
	\qed
\end{lemma}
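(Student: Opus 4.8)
The plan is to verify the three asserted properties of the map $T(f):=\min\{f,0\}=-\{f\}^{-}$ directly, working pointwise first and then integrating. The key elementary fact, valid for all real numbers $s,t$, is the inequality
\[
|\min\{s,0\}-\min\{t,0\}|\le |s-t|,
\]
together with the sign condition $(\min\{s,0\}-\min\{t,0\})(s-t)\ge 0$. Both follow from a short case analysis on the signs of $s$ and $t$ (four cases; in the mixed cases one uses that $\min\{s,0\}$ equals $0$ or $s$). Once these pointwise statements are in hand, the functional-analytic assertions are immediate: for Lipschitz continuity with constant $1$, integrate the first inequality squared to get $\|T(f)-T(h)\|_{L^2(\Omega)}^2\le\|f-h\|_{L^2(\Omega)}^2$; this simultaneously shows $T$ maps $L^2(\Omega)$ into $L^2(\Omega)$ (take $h=0$, noting $T(0)=0$, so $\|T(f)\|_{L^2(\Omega)}\le\|f\|_{L^2(\Omega)}$), hence $T$ is bounded with operator-type bound $1$.

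For monotonicity I would integrate the sign condition: for $f,h\in L^2(\Omega)$,
\[
\int_\Omega \big(T(f)-T(h)\big)\,(f-h)\dd x=\int_\Omega\big(\min\{f,0\}-\min\{h,0\}\big)(f-h)\dd x\ge 0,
\]
since the integrand is nonnegative a.e. by the pointwise inequality. This is exactly the statement that $T$ is a monotone operator on $L^2(\Omega)$. I would also remark that the same computation shows the \emph{negative part} operator $f\mapsto\{f\}^{-}=-T(f)$ is monotone as well, with the opposite sign convention absorbed, since the paper will want monotonicity of the penalization term in Problem~\ref{problem1}; but strictly for the stated lemma only the properties of $T=-\{\cdot\}^{-}$ are needed.

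None of the steps presents a genuine obstacle; the ``hard part'' is merely bookkeeping the case analysis for the two pointwise inequalities cleanly. A slightly slicker alternative, which I might prefer for brevity, is to write $\min\{s,0\}=\tfrac12(s-|s|)$ and use that $s\mapsto s$ and $s\mapsto |s|$ are both $1$-Lipschitz and that $s\mapsto -|s|$ is... not monotone, so that shortcut helps only with the Lipschitz bound; for monotonicity one still falls back on the case analysis or on the observation that $T$ is the (negative of the) metric projection in $L^2$ onto the closed convex cone $\{g\in L^2(\Omega):g\ge 0\}$ composed appropriately, projections being firmly nonexpansive hence monotone and $1$-Lipschitz. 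I would present the elementary pointwise argument as the main line, since it is self-contained and keeps the constant tracking transparent, and only mention the projection viewpoint as a remark.
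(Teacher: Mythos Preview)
Your proof is correct and follows essentially the same strategy as the paper's: verify monotonicity, boundedness, and $1$-Lipschitz continuity directly. The paper establishes Lipschitz continuity via the identity $\min\{s,0\}=\tfrac12(s-|s|)$ together with Minkowski and the reverse triangle inequality (the ``slicker alternative'' you mention), proves boundedness separately by a duality estimate, and handles monotonicity by an integral computation over the sets $\{f\le 0\}$, $\{g\le 0\}$; your pointwise case analysis followed by integration is a cleaner and equally valid route, and deriving boundedness from the Lipschitz estimate with $h=0$ is more economical than the paper's duality argument.

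One caveat: your side remark that ``the same computation shows the negative part operator $f\mapsto\{f\}^{-}=-T(f)$ is monotone as well'' is not correct as stated. If $T$ is monotone then $-T$ is anti-monotone; concretely, for $f=-1$ and $h=0$ one has $(\{f\}^{-}-\{h\}^{-})(f-h)=(1-0)(-1-0)=-1<0$. What is true, and what the paper actually uses, is the monotonicity of $-\{\cdot\}^{-}=T$ itself; the penalty term in Problem~$\mathcal{P}_\kappa(\Omega^\varepsilon)$ already carries the minus sign, so no additional sign flip is needed. This does not affect your main argument, but the remark should be dropped or corrected.
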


The existence and uniqueness of the solution for Problem~\ref{problem1} is classical too (cf., e.g., Theorem~3.15 in~\cite{KikuchiOden1988}, or~\cite{Lions1969}).

\begin{theorem}
\label{ex-problem1}
Problem~\ref{problem1} has a unique solution.
\end{theorem}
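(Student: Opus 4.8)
The plan is to recast Problem~\ref{problem1} as a single operator equation in the Hilbert space $\bm{V}(\Omega^\varepsilon)$ and to appeal to the standard existence and uniqueness theory for strongly monotone, Lipschitz‑continuous (hence hemicontinuous and coercive) operators on a Hilbert space — this is precisely the content of Theorem~3.15 of~\cite{KikuchiOden1988} (see also~\cite{Lions1969}). First I would note that, since $\bm{g}^{i,\varepsilon}\in\mathcal{C}^1(\overline{\Omega^\varepsilon})$ and $\Omega^\varepsilon$ is bounded, the mapping $T^\varepsilon\colon\bm{v}^\varepsilon\mapsto v_i^\varepsilon\bm{g}^{i,\varepsilon}\cdot\bm{q}$ is a bounded linear operator from $\bm{V}(\Omega^\varepsilon)$ into $L^2(\Omega^\varepsilon)$, and that $\sqrt{g^\varepsilon}$ is bounded above and below away from $0$ on $\overline{\Omega^\varepsilon}$. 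Writing $c^\varepsilon:=\bm{\Theta}\cdot\bm{q}$, which is bounded and satisfies $c^\varepsilon\ge 0$ by the standing assumption $\bm{\Theta}(\overline{\Omega^\varepsilon})\subset\mathbb{H}$ (indeed $c^\varepsilon>0$ by Lemma~\ref{lem:1}), the left‑hand side of the variational equation in Problem~\ref{problem1} defines, through the Riesz representation theorem, an operator $\mathcal{A}_\kappa^\varepsilon=\mathcal{L}^\varepsilon+\mathcal{B}_\kappa^\varepsilon\colon\bm{V}(\Omega^\varepsilon)\to\bm{V}(\Omega^\varepsilon)$, where $\langle\mathcal{L}^\varepsilon\bm{u}^\varepsilon,\bm{v}^\varepsilon\rangle=\int_{\Omega^\varepsilon}A^{ijk\ell,\varepsilon}e^\varepsilon_{k\|\ell}(\bm{u}^\varepsilon)e^\varepsilon_{i\|j}(\bm{v}^\varepsilon)\sqrt{g^\varepsilon}\dd x^\varepsilon$ is the linear elasticity operator and $\langle\mathcal{B}_\kappa^\varepsilon\bm{u}^\varepsilon,\bm{v}^\varepsilon\rangle=-\tfrac{\varepsilon}{\kappa}\int_{\Omega^\varepsilon}\{c^\varepsilon+T^\varepsilon\bm{u}^\varepsilon\}^-(T^\varepsilon\bm{v}^\varepsilon)\sqrt{g^\varepsilon}\dd x^\varepsilon$; the right‑hand side gives a fixed $\bm{F}^\varepsilon\in\bm{V}(\Omega^\varepsilon)$, and Problem~\ref{problem1} becomes the equation $\mathcal{A}_\kappa^\varepsilon\bm{u}_\kappa^\varepsilon=\bm{F}^\varepsilon$.

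Next I would verify the three structural properties. For \emph{strong monotonicity}: the operator $\mathcal{L}^\varepsilon$ is $\bm{V}(\Omega^\varepsilon)$‑elliptic — this was already established in the proof of Theorem~\ref{t:2}, on the basis of the uniform positive‑definiteness of $(A^{ijk\ell,\varepsilon})$ and the three‑dimensional Korn inequality in curvilinear coordinates together with the boundary condition on $\Gamma_0^\varepsilon$ — hence strongly monotone with some constant $C_\varepsilon>0$; and because $t\mapsto-\{t\}^-=\min\{t,0\}$ is nondecreasing, the integrand defining $\langle\mathcal{B}_\kappa^\varepsilon\bm{u}^\varepsilon-\mathcal{B}_\kappa^\varepsilon\bm{w}^\varepsilon,\bm{u}^\varepsilon-\bm{w}^\varepsilon\rangle$ is pointwise $\le 0$ and is then integrated against the positive weight $\tfrac{\varepsilon}{\kappa}\sqrt{g^\varepsilon}$, so $\mathcal{B}_\kappa^\varepsilon$ is monotone — this is exactly the monotonicity recorded in Lemma~\ref{lem:3}, transported through $T^\varepsilon$ — and $\mathcal{A}_\kappa^\varepsilon$ is therefore strongly monotone with the same constant $C_\varepsilon$. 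For \emph{Lipschitz continuity}: $\mathcal{L}^\varepsilon$ is bounded linear, and by the Lipschitz bound (with constant $1$) of Lemma~\ref{lem:3}, the boundedness of $T^\varepsilon$, the Cauchy–Schwarz inequality and the boundedness of $\sqrt{g^\varepsilon}$, $\mathcal{B}_\kappa^\varepsilon$ is Lipschitz as well, so $\mathcal{A}_\kappa^\varepsilon$ is Lipschitz, in particular hemicontinuous.

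For \emph{coercivity}: putting $h:=c^\varepsilon+T^\varepsilon\bm{v}^\varepsilon$, so that $T^\varepsilon\bm{v}^\varepsilon=h-c^\varepsilon$, and using the elementary identity $-\{h\}^-h=(\{h\}^-)^2$, one obtains $\langle\mathcal{A}_\kappa^\varepsilon\bm{v}^\varepsilon,\bm{v}^\varepsilon\rangle=\langle\mathcal{L}^\varepsilon\bm{v}^\varepsilon,\bm{v}^\varepsilon\rangle+\tfrac{\varepsilon}{\kappa}\int_{\Omega^\varepsilon}\big((\{h\}^-)^2+c^\varepsilon\{h\}^-\big)\sqrt{g^\varepsilon}\dd x^\varepsilon\ge\langle\mathcal{L}^\varepsilon\bm{v}^\varepsilon,\bm{v}^\varepsilon\rangle\ge C_\varepsilon\|\bm{v}^\varepsilon\|_{\bm{H}^1(\Omega^\varepsilon)}^2$, since $c^\varepsilon\ge 0$, $\{h\}^-\ge 0$ and $\sqrt{g^\varepsilon}>0$; consequently $\langle\mathcal{A}_\kappa^\varepsilon\bm{v}^\varepsilon-\bm{F}^\varepsilon,\bm{v}^\varepsilon\rangle/\|\bm{v}^\varepsilon\|_{\bm{H}^1(\Omega^\varepsilon)}\to+\infty$ as $\|\bm{v}^\varepsilon\|_{\bm{H}^1(\Omega^\varepsilon)}\to\infty$. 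By the Minty–Browder theorem for monotone, coercive, hemicontinuous operators on a reflexive space — equivalently, by the contraction‑mapping argument available for strongly monotone Lipschitz operators on a Hilbert space, which in addition furnishes a convergent iterative scheme — the equation $\mathcal{A}_\kappa^\varepsilon\bm{u}_\kappa^\varepsilon=\bm{F}^\varepsilon$ admits a solution; uniqueness follows immediately by subtracting two solutions, testing with their difference and invoking the strong monotonicity of $\mathcal{A}_\kappa^\varepsilon$.

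The only point that is not entirely automatic is tracking the sign of the penalty contribution, i.e.\ confirming that $\mathcal{B}_\kappa^\varepsilon$ adds a \emph{non‑negative} term to both the monotonicity bracket and the coercivity estimate; this is exactly what Lemma~\ref{lem:3} and the inequality $c^\varepsilon=\bm{\Theta}\cdot\bm{q}\ge 0$ provide, so no genuine obstacle remains. If one prefers to stay within convex analysis, an equivalent route is to observe that Problem~\ref{problem1} is the Euler equation of the functional $\bm{v}^\varepsilon\mapsto J^\varepsilon(\bm{v}^\varepsilon)+\tfrac{\varepsilon}{2\kappa}\int_{\Omega^\varepsilon}(\{c^\varepsilon+T^\varepsilon\bm{v}^\varepsilon\}^-)^2\sqrt{g^\varepsilon}\dd x^\varepsilon$, which is strictly convex, continuous and coercive on $\bm{V}(\Omega^\varepsilon)$ and of class $\mathcal{C}^1$ because $t\mapsto\tfrac12(\{t\}^-)^2$ is $\mathcal{C}^1$ with derivative $\min\{t,0\}=-\{t\}^-$; it therefore has a unique minimizer, whose vanishing Gâteaux derivative is precisely the stated variational equation.
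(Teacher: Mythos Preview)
Your proof is correct and follows essentially the same approach as the paper: define the nonlinear operator combining the elasticity bilinear form with the penalty term, use Korn's inequality in curvilinear coordinates for strong monotonicity and coercivity of the elastic part, invoke Lemma~\ref{lem:3} for the monotonicity and Lipschitz continuity of the penalty contribution, and conclude via the Minty--Browder theorem. Your write-up is more detailed than the paper's (which merely asserts continuity, strict monotonicity and coercivity), and your alternative route via the strictly convex $\mathcal{C}^1$ penalized energy is a nice complement; one small slip to fix is the sign in your monotonicity sentence --- the integrand in $\langle\mathcal{B}_\kappa^\varepsilon\bm{u}^\varepsilon-\mathcal{B}_\kappa^\varepsilon\bm{w}^\varepsilon,\bm{u}^\varepsilon-\bm{w}^\varepsilon\rangle$ is pointwise $\ge 0$, not $\le 0$, which is precisely what yields monotonicity.
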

\begin{proof}
Observe that the linear form
$$
\bm{v}\in \bm{H}^1(\Omega^\varepsilon) \mapsto \int_{\Omega^\varepsilon} f^{i, \varepsilon} v_i \sqrt{g^\varepsilon} \dd x^\varepsilon,
$$
is continuous (cf. Theorem~\ref{t:2}). The mapping $A:\bm{V}(\Omega^\varepsilon) \to \bm{V}'(\Omega^\varepsilon)$ defined by
$$
\langle A\bm{u},\bm{v}\rangle_{\bm{V}'(\Omega^\varepsilon), \bm{V}(\Omega^\varepsilon)}:=
\int_{\Omega^\varepsilon} A^{ijk\ell, \varepsilon} e^\varepsilon_{k\| \ell}  (\bm{u}) e^\varepsilon_{i\| j}(\bm{v}) \sqrt{g^\varepsilon} \dd x^\varepsilon
-\dfrac{\varepsilon}{\kappa}\int_{\Omega^\varepsilon}\dfrac{\left\{[\bm{\Theta}+u_{j,\kappa}^\varepsilon \bm{g}^{j,\varepsilon}]\cdot \bm{q}\right\}^{-}}{\sqrt{\sum_{\ell=1}^{3}|\bm{g}^{\ell,\varepsilon}\cdot\bm{q}|^2}} (v_i^\varepsilon \bm{g}^{i,\varepsilon}\cdot \bm{q}) \sqrt{g^\varepsilon} \dd x^\varepsilon,
$$
is continuous and, thanks to an \emph{ad hoc} Korn's inequality in curvilinear coordinates (cf., e.g., Theorem~1.7-4 of~\cite{Ciarlet2000}) and Lemma~\ref{lem:3}, strictly monotone and coercive. Therefore, the conclusion follows by the Minty-Browder theorem (cf., e.g., Theorem~9.14-1 of ~\cite{PGCLNFAA}).
\end{proof}

By means of a different classical approach based on energy estimates~\cite{Lions1969, Scholz1986} (see also part~(iv) of Theorem~9.14-1 of ~\cite{PGCLNFAA}), one can prove Theorem~\ref{ex-problem1} by establishing that for each $\varepsilon>0$
\begin{equation}
\label{weak}
\bm{u}^\varepsilon_\kappa \rightharpoonup \bm{u}^\varepsilon,\quad\textup{ in }\bm{V}(\Omega^\varepsilon),
\end{equation}
and that, thanks to the monotonicity of the penalty term established in Lemma~\ref{lem:3}, the weak limit satisfies Problem~\ref{problem0}. As a result of the convergence~\eqref{weak} and the continuity of the linear operator $e^\varepsilon_{i\| j}:\bm{H}^1(\Omega^\varepsilon) \to L^2(\Omega^\varepsilon)$, we have that
$$
e^\varepsilon_{i\| j}(\bm{u}^\varepsilon_\kappa) \rightharpoonup e^\varepsilon_{i\| j}(\bm{u}^\varepsilon), \quad \textup{ in } L^2(\Omega^\varepsilon).
$$

Actually, a stronger conclusion holds: Given $\varepsilon>0$, the following \emph{strong} convergence holds
\begin{equation*}
\label{strong}
\bm{u}^\varepsilon_\kappa \to \bm{u}^\varepsilon, \quad\textup{ in }\bm{V}(\Omega^\varepsilon).
\end{equation*}

To see this, observe that the uniform positive-definiteness of the elasticity tensor $(A^{ijk\ell,\varepsilon})$ (cf., e.g., \cite{Ciarlet1988}), Korn's inequality (viz. Theorem~1.7-4 of~\cite{Ciarlet2000}), the monotonicity of the penalty term (Lemma~\ref{lem:3}), the continuity of the operators $e_{i\|j}^\varepsilon$, and the weak convergence~\eqref{weak} give
\begin{align*}
\|\bm{u}^\varepsilon_\kappa - \bm{u}^\varepsilon\|_{\bm{H}^1(\Omega^\varepsilon)}^2 &\le
\dfrac{c_0 c_e}{\sqrt{g_0}}\int_{\Omega^\varepsilon} A^{ijk\ell, \varepsilon} e^\varepsilon_{k\| \ell}(\bm{u}^\varepsilon_\kappa - \bm{u}^\varepsilon) e^\varepsilon_{i\| j}(\bm{u}^\varepsilon_\kappa - \bm{u}^\varepsilon) \sqrt{g^\varepsilon} \dd x^\varepsilon\\
&=\dfrac{\varepsilon c_0 c_e}{\sqrt{g_0} \kappa}\int_{\Omega^\varepsilon} \dfrac{\left\{[\bm{\Theta}+u_{j,\kappa}^\varepsilon \bm{g}^{j,\varepsilon}]\cdot \bm{q}\right\}^{-}}{\sqrt{\sum_{\ell=1}^{3}|\bm{g}^{\ell,\varepsilon}\cdot\bm{q}|^2}} ((u^\varepsilon_{i,\kappa}-u^\varepsilon_i) \bm{g}^{i,\varepsilon}\cdot \bm{q}) \sqrt{g^\varepsilon} \dd x^\varepsilon\\
&\quad+\dfrac{c_0 c_e}{\sqrt{g_0}} \int_{\Omega^\varepsilon} f^{i, \varepsilon} (u^\varepsilon_{i,\kappa}-u^\varepsilon_i) \sqrt{g^\varepsilon} \dd x^\varepsilon\\
&\quad-\dfrac{c_0 c_e}{\sqrt{g_0}}\int_{\Omega^\varepsilon} A^{ijk\ell, \varepsilon} e^\varepsilon_{k\| \ell}(\bm{u}^\varepsilon) e^\varepsilon_{i\| j}(\bm{u}^\varepsilon_\kappa - \bm{u}^\varepsilon) \sqrt{g^\varepsilon} \dd x^\varepsilon\\
&\le \dfrac{c_0 c_e}{\sqrt{g_0}} \int_{\Omega^\varepsilon} f^{i, \varepsilon} (u^\varepsilon_{i,\kappa}-u^\varepsilon_i) \sqrt{g^\varepsilon} \dd x^\varepsilon\\
&\quad-\dfrac{c_0 c_e}{\sqrt{g_0}}\int_{\Omega^\varepsilon} A^{ijk\ell, \varepsilon} e^\varepsilon_{k\| \ell}(\bm{u}^\varepsilon) e^\varepsilon_{i\| j}(\bm{u}^\varepsilon_\kappa - \bm{u}^\varepsilon) \sqrt{g^\varepsilon} \dd x^\varepsilon \to 0,
\end{align*}
as $\kappa \to 0$.
In particular, we have that for each $\varepsilon>0$ and for each $\delta>0$, we can find a number $\kappa_0=\kappa_0(\delta,\varepsilon)>0$ such that, for each $0<\kappa<\kappa_0$, it results
\begin{equation}\label{lim:0}
\|\bm{u}^\varepsilon-\bm{u}^\varepsilon_\kappa\|_{\bm{H}^1(\Omega^\varepsilon)} <\dfrac{\delta}{2},
\end{equation}
for all $\varepsilon>0$, where $\bm{u}^\varepsilon$ and $\bm{u}^\varepsilon_\kappa$ respectively denote the solutions of Problem~\ref{problem0} and Problem~\ref{problem1}.

We observe that if $\bm{\beta}^\varepsilon(\bm{u}^\varepsilon_\kappa)$ enjoys higher regularity (viz., e.g., \cite{Scholz1986} and~\cite{MeiPie2022}) then the threshold number $\kappa_0$ can be made independent of $\varepsilon$.

\begin{theorem}
	\label{th:beta-6}
	Let $\kappa>0$ be given.
	Let $\bm{u}^\varepsilon$ be the solution of Problem~\ref{problem0} and let $\bm{u}^\varepsilon_\kappa$ be the solution of Problem~\ref{problem1}.
	
	Assume that there exists a constant $C_1>0$ independent of $\kappa$ and $\varepsilon$ such that $\|\bm{\beta}^\varepsilon(\bm{u}_\kappa^\varepsilon)\|_{\bm{V}(\Omega^\varepsilon)} \le C_1\sqrt{\kappa}$, where the nonlinear operator $\bm{\beta}^\varepsilon$ has been defined in~\eqref{betaeps}.

Then, there exists a constant $C>0$ independent of $\varepsilon$ and $\kappa$ such that:
	\begin{equation*}
		\|\bm{u}^\varepsilon-\bm{u}^\varepsilon_\kappa\|_{\bm{V}(\Omega^\varepsilon)}\le C \sqrt{\kappa}.
	\end{equation*}
\end{theorem}
\begin{proof}
	For each $\bm{v}\in \bm{L}^2(\Omega^\varepsilon)$, define the operator $\tilde{\bm{\beta}}^\varepsilon:\bm{L}^2(\Omega^\varepsilon) \to \bm{L}^2(\Omega^\varepsilon)$ by:
	\begin{equation*}
		\tilde{\bm{\beta}}^\varepsilon(\bm{v}):=\left(-\{(\bm{\Theta}+v_j\bm{g}^{j,\varepsilon})\cdot\bm{q}\}^{-}\left(\dfrac{\bm{g}^{i,\varepsilon}\cdot\bm{q}}{\sum_{\ell=1}^3|\bm{g}^{\ell,\varepsilon}\cdot\bm{q}|^2}\right)\right)_{i=1}^3.
	\end{equation*}
	
	Define $P(\bm{u}^\varepsilon_\kappa):=\bm{u}^\varepsilon_\kappa-\tilde{\bm{\beta}}^\varepsilon(\bm{u}^\varepsilon_\kappa)$, and observe that $P(\bm{u}^\varepsilon_\kappa) \in\bm{U}(\Omega^\varepsilon)$. Indeed, a direct computation gives
	\begin{equation*}
		\begin{aligned}
			&\left(\bm{\Theta}+\left[u^\varepsilon_{\kappa,i}-\dfrac{-\{(\bm{\Theta}+u^\varepsilon_{\kappa,j}\bm{g}^{j,\varepsilon})\cdot\bm{q}\}^{-}(\bm{g}^{i,\varepsilon}\cdot\bm{q})}{\sum_{\ell=1}^3|\bm{g}^{\ell,\varepsilon}\cdot\bm{q}|^2}\right]\bm{g}^{i,\varepsilon}\right)\cdot\bm{q}
			=((\bm{\Theta}+u^\varepsilon_{\kappa,i}\bm{g}^{i,\varepsilon})\cdot\bm{q})+\{(\bm{\Theta}+u^\varepsilon_{\kappa,i}\bm{g}^{i,\varepsilon})\cdot\bm{q}\}^{-}\\
			&=\{(\bm{\Theta}+u^\varepsilon_{\kappa,i}\bm{g}^{i,\varepsilon})\cdot\bm{q}\}^{+}\ge0.
		\end{aligned}
	\end{equation*}

	Let us estimate
	\begin{equation*}
		\label{est-1}
		\|\bm{u}^\varepsilon_\kappa-\bm{u}^\varepsilon\|_{\bm{V}(\Omega^\varepsilon)} \le \|\tilde{\bm{\beta}}^\varepsilon(\bm{u}^\varepsilon_\kappa)\|_{\bm{V}(\Omega^\varepsilon)}
		+\|\bm{u}^\varepsilon_\kappa-\tilde{\bm{\beta}}^\varepsilon(\bm{u}^\varepsilon_\kappa)-\bm{u}^\varepsilon\|_{\bm{V}(\Omega^\varepsilon)} \le C_1 \sqrt{\kappa}+\|\bm{u}^\varepsilon_\kappa-\tilde{\bm{\beta}}^\varepsilon(\bm{u}^\varepsilon_\kappa)-\bm{u}^\varepsilon\|_{\bm{V}(\Omega^\varepsilon)},
	\end{equation*}
	where the latter inequality holds thanks to the assumed estimate for the penalty term. Since $P(\bm{u}^\varepsilon_\kappa)\in \bm{U}(\Omega^\varepsilon)$, an application of the uniform positive definiteness of the fourth order three-dimensional elasticity tensor $(A^{ijk\ell,\varepsilon})$ (Theorem~1.8-1 of~\cite{Ciarlet2000}), Korn's inequality (Theorem~1.7-4 in~\cite{Ciarlet2000}) gives
	\begin{align*}
		&\dfrac{\sqrt{g_0}}{c_e c_0}\|P(\bm{u}^\varepsilon_\kappa)-\bm{u}^\varepsilon\|_{\bm{V}(\Omega^\varepsilon)}^2
		\le \int_{\Omega^\varepsilon}A^{ijk\ell,\varepsilon} e_{k\|\ell}^\varepsilon(P(\bm{u}^\varepsilon_\kappa)-\bm{u}^\varepsilon) e_{i\|j}^\varepsilon(P(\bm{u}^\varepsilon_\kappa)-\bm{u}^\varepsilon) \sqrt{g^\varepsilon} \dd x^\varepsilon\\
		&\le-\int_{\Omega^\varepsilon} \bm{f}^{\varepsilon} \cdot (P(\bm{u}^\varepsilon_\kappa)-\bm{u}^\varepsilon) \sqrt{g^\varepsilon} \dd x^\varepsilon
		+\int_{\Omega^\varepsilon} A^{ijk\ell,\varepsilon} e_{k\|\ell}^\varepsilon(P(\bm{u}^\varepsilon_\kappa)) e_{i\|j}^\varepsilon(P(\bm{u}^\varepsilon_\kappa)-\bm{u}^\varepsilon) \sqrt{g^\varepsilon} \dd x^\varepsilon\\
		&=-\int_{\Omega^\varepsilon} \bm{f}^{\varepsilon} \cdot (P(\bm{u}^\varepsilon_\kappa)-\bm{u}^\varepsilon) \sqrt{g^\varepsilon} \dd x^\varepsilon
		-\dfrac{\varepsilon}{\kappa} \int_{\Omega^\varepsilon} \bm{\beta}^\varepsilon(\bm{u}^\varepsilon_\kappa) \cdot (P(\bm{u}^\varepsilon_\kappa)-\bm{u}^\varepsilon) \sqrt{g^\varepsilon} \dd x^\varepsilon
		+\int_{\Omega^\varepsilon} \bm{f}^{\varepsilon} \cdot (P(\bm{u}^\varepsilon_\kappa)-\bm{u}^\varepsilon) \sqrt{g^\varepsilon} \dd x^\varepsilon\\
		&\quad-\int_{\Omega^\varepsilon}A^{ijk\ell,\varepsilon} e_{k\|\ell}^\varepsilon(\tilde{\bm{\beta}}^\varepsilon(\bm{u}^\varepsilon_\kappa)) e_{i\|j}^\varepsilon(P(\bm{u}^\varepsilon_\kappa)-\bm{u}^\varepsilon) \sqrt{g^\varepsilon} \dd x^\varepsilon\\
		&=\dfrac{\varepsilon}{\kappa}\int_{\Omega^\varepsilon}\left(-\{(\bm{\Theta}+u^\varepsilon_{\kappa,j}\bm{g}^{j,\varepsilon})\cdot\bm{q}\}^{-}\right) \left( \dfrac{u^\varepsilon_i\bm{g}^{i,\varepsilon}\cdot\bm{q}}{\sqrt{\sum_{\ell=1}^3|\bm{g}^{\ell,\varepsilon}\cdot\bm{q}|^2}}\right) \sqrt{g^\varepsilon} \dd x^\varepsilon\\
		&\quad-\dfrac{\varepsilon}{\kappa}\int_{\Omega^\varepsilon}\left(-\{(\bm{\Theta}+u^\varepsilon_{\kappa,j}\bm{g}^{j,\varepsilon})\cdot\bm{q}\}^{-}\right) \left( \dfrac{u^\varepsilon_{\kappa,i}\bm{g}^{i,\varepsilon}\cdot\bm{q}}{\sqrt{\sum_{\ell=1}^3|\bm{g}^{\ell,\varepsilon}\cdot\bm{q}|^2}}\right) \sqrt{g^\varepsilon} \dd x^\varepsilon\\
		&\quad+\dfrac{\varepsilon}{\kappa}\int_{\Omega^\varepsilon} \left(-\{(\bm{\Theta}+u^\varepsilon_{\kappa,j}\bm{g}^{j,\varepsilon})\cdot\bm{q}\}^{-} \dfrac{\bm{g}^{i,\varepsilon}\cdot\bm{q}}{\sqrt{\sum_{\ell=1}^3|\bm{g}^{\ell,\varepsilon}\cdot\bm{q}|^2}}\right)_{i=1}^3 \cdot \left(-\{(\bm{\Theta}+u^\varepsilon_{\kappa,j}\bm{g}^{j,\varepsilon})\cdot\bm{q}\}^{-} \dfrac{\bm{g}^{i,\varepsilon}\cdot\bm{q}}{\sum_{\ell=1}^3|\bm{g}^{\ell,\varepsilon}\cdot\bm{q}|^2}\right)_{i=1}^3 \sqrt{g^\varepsilon} \dd x^\varepsilon\\
		&\quad-\int_{\Omega^\varepsilon}A^{ijk\ell,\varepsilon} e_{k\|\ell}^\varepsilon(\tilde{\bm{\beta}}^\varepsilon(\bm{u}^\varepsilon_\kappa)) e_{i\|j}^\varepsilon(P(\bm{u}^\varepsilon_\kappa)-\bm{u}^\varepsilon) \sqrt{g^\varepsilon} \dd x^\varepsilon\\
		&\le-\dfrac{\varepsilon}{\kappa}\int_{\Omega^\varepsilon}\left(-\{(\bm{\Theta}^\varepsilon+u^\varepsilon_{\kappa,j}\bm{g}^{j,\varepsilon})\cdot\bm{q}\}^{-}\right) \left( \dfrac{\bm{\Theta}\cdot\bm{q}}{\sqrt{\sum_{\ell=1}^3|\bm{g}^{\ell,\varepsilon}\cdot\bm{q}|^2}}\right) \sqrt{g^\varepsilon} \dd x^\varepsilon\\
		&\quad+\dfrac{\varepsilon}{\kappa}\int_{\Omega^\varepsilon}\left(-\{(\bm{\Theta}+u^\varepsilon_{\kappa,j}\bm{g}^{j,\varepsilon})\cdot\bm{q}\}^{-}\right) \left( \dfrac{\bm{\Theta}\cdot\bm{q}}{\sqrt{\sum_{\ell=1}^3|\bm{g}^{\ell,\varepsilon}\cdot\bm{q}|^2}}\right) \sqrt{g^\varepsilon} \dd x^\varepsilon\\
		&\quad-\dfrac{\varepsilon}{\kappa}\int_{\Omega^\varepsilon}\dfrac{|-\{(\bm{\Theta}+u^\varepsilon_{\kappa,j}\bm{g}^{j,\varepsilon})\cdot\bm{q}\}^{-}|^2}{\sqrt{\sum_{\ell=1}^3|\bm{g}^{\ell,\varepsilon}\cdot\bm{q}|^2}} \sqrt{g^\varepsilon}\dd x^\varepsilon+\dfrac{\varepsilon}{\kappa}\int_{\Omega^\varepsilon}\dfrac{|-\{(\bm{\Theta}+u^\varepsilon_{\kappa,j}\bm{g}^{j,\varepsilon})\cdot\bm{q}\}^{-}|^2}{\sqrt{\sum_{\ell=1}^3|\bm{g}^{\ell,\varepsilon}\cdot\bm{q}|^2}} \sqrt{g^\varepsilon} \dd x^\varepsilon\\
		&\quad-\int_{\Omega^\varepsilon}A^{ijk\ell,\varepsilon} e_{k\|\ell}^\varepsilon(\tilde{\bm{\beta}}^\varepsilon(\bm{u}^\varepsilon_\kappa)) e_{i\|j}^\varepsilon(P(\bm{u}^\varepsilon_\kappa)-\bm{u}^\varepsilon) \sqrt{g^\varepsilon} \dd x^\varepsilon\\
		&=-\int_{\Omega^\varepsilon}A^{ijk\ell,\varepsilon} e_{k\|\ell}^\varepsilon(\tilde{\bm{\beta}}^\varepsilon(\bm{u}^\varepsilon_\kappa)) e_{i\|j}^\varepsilon(P(\bm{u}^\varepsilon_\kappa)-\bm{u}^\varepsilon) \sqrt{g^\varepsilon} \dd x^\varepsilon\le M \|\tilde{\bm{\beta}}^\varepsilon(\bm{u}^\varepsilon_\kappa)\|_{\bm{V}(\Omega^\varepsilon)} \|P(\bm{u}^\varepsilon_\kappa)-\bm{u}^\varepsilon\|_{\bm{V}(\Omega^\varepsilon)} \sqrt{g_1}\\
		&\le M C_1 \sqrt{g_1} \sqrt{\kappa} \|P(\bm{u}^\varepsilon_\kappa)-\bm{u}^\varepsilon\|_{\bm{V}(\Omega^\varepsilon)},
	\end{align*}
	where we recall that $g_0$ and $g_1$ are positive constants independent of $\varepsilon$ (viz., e.g., Theorem~3.1-1 of~\cite{Ciarlet2000}) and that the bounding constant $M$ for the fourth order three-dimensional elasticity tensor is independent of $\varepsilon$ (viz., e.g., Theorem~3.3-2 of~\cite{Ciarlet2000}).
	
	In conclusion, we have that
	\begin{equation*}
		\|P(\bm{u}^\varepsilon_\kappa)-\bm{u}^\varepsilon\|_{\bm{V}(\Omega^\varepsilon)} \le M C_1 c_0 c_e\dfrac{\sqrt{g_1}}{\sqrt{g}_0} \sqrt{\kappa},
	\end{equation*}
	so that
	\begin{equation*}
		\|\bm{u}^\varepsilon_\kappa-\bm{u}^\varepsilon\|_{\bm{V}(\Omega^\varepsilon)} \le C \sqrt{\kappa},
	\end{equation*}
	where the constant $C$ is defined by
	$$
	C:=C_1\left(1+M c_0 c_e \sqrt{\dfrac{g_1}{g_0}}\right).
	$$
	
	This completes the proof.
\end{proof}

The property that there exists a constant $C_1>0$ independent of $\kappa$ and $\varepsilon$ such that $\|\bm{\beta}^\varepsilon(\bm{u}_\kappa^\varepsilon)\|_{\bm{V}(\Omega^\varepsilon)} \le C_1\sqrt{\kappa}$, where $\bm{\beta}^\varepsilon$ has been defined in~\eqref{betaeps} could be established upon proving, in the same spirit of~\cite{Scholz1986} (see also~\cite{MeiPie2022}), that the solution $\bm{u}^\varepsilon$ of Problem~\ref{problem0} is of class $\bm{H}^2(\Omega^\varepsilon) \cap \bm{U}(\Omega^\varepsilon)$ and that the solution $\bm{u}^\varepsilon_\kappa$ is also of class $\bm{H}^2(\Omega^\varepsilon) \cap \bm{V}(\Omega^\varepsilon)$. However, the latter augmentations of regularity are not easy at all to prove, as the boundary conditions for the corresponding problems are only enforced on a portion of the boundary, thus preventing us from applying the argument of Agmon, Douglis \& Nirenberg~\cite{AgmDouNir1959,AgmDouNir1964}.

\section{The scaled three-dimensional problem for a family of flexural shells} \label{Sec:3}

In section~\ref{Sec:2}, we considered an obstacle problem for ``general'' linearly elastic shells. From now on, we will restrict ourselves  to a specific class of shells, according to the following definition (proposed in \cite{CiaLodsMia1996}; see also \cite{Ciarlet2000}).

Consider a linearly elastic shell, subjected to the various assumptions set forth in section~\ref{Sec:2}. Such a shell is said to be a \emph{linearly elastic flexural shell} if the following two additional assumptions are satisfied: \emph{first}, $\emptyset \neq \gamma_0 \subset \gamma$, i.e., the homogeneous boundary condition of place is imposed over a \emph{nonzero area portion of the entire lateral face} $\gamma_0 \times \left[ - \varepsilon , \varepsilon \right]$ of the shell, and \emph{second}, the space
\begin{align*}
\bm{V}_F(\omega):=\{\bm{\eta}=(\eta_i) \in H^1(\omega)\times H^1(\omega)\times H^2(\omega); e_{i\|j}(\bm{\eta})=0 \textup{ in }\omega
\textup{ and }\eta_i=\partial_{\nu}\eta_3=0 \textup{ on }\gamma_0\}
\end{align*}
contains nonzero functions, i.e., $\bm{V}_F(\omega)\neq\{\bf0\}$.

In this paper, we consider the \emph{obstacle problem} as defined in section~\ref{Sec:2} \emph{for a family of linearly elastic flexural shells}, all sharing the \emph{same middle surface} and whose thickness $2 \varepsilon > 0$ is considered as a ``small'' parameter approaching zero. Our objective then consists in performing an \emph{asymptotic analysis as} $\varepsilon \to 0$, so as to seek whether we can identify a \emph{limit two-dimensional problem}. To this end, we shall resort to a (by now standard) methodology first proposed by Ciarlet, Lods and Miara (cf.\ Theorem~5.1 of~\cite{CiaLodsMia1996} and Theorem~6.2-1 of~\cite{Ciarlet2000}): To begin with, we ``scale'' each problem $\mathcal{P} (\Omega^\varepsilon), \, \varepsilon > 0$, over a \emph{fixed domain} $\Omega$, using appropriate \emph{scalings on the unknowns} and \emph{assumptions on the data}. Note that these scalings and assumptions definitely \emph{depend} on the type of shells that are considered; for instance, those used for the \emph{linearly elastic elliptic membrane shells} considered elsewhere (cf.\ \cite{CiaMarPie2018b} and also~\cite{CiaMarPie2018}) are different.

More specifically, let
$$
\Omega := \omega \times \left] - 1, 1 \right[ ,
$$
let $x = (x_i)$ denote a generic point in the set $\overline{\Omega}$, and let $\partial_i := \partial/ \partial x_i$. With each point $x = (x_i) \in \overline{\Omega}$, we associate  the point $x^\varepsilon = (x^\varepsilon_i)$ defined by
$$
x^\varepsilon_\alpha := x_\alpha = y_\alpha \quad\text{ and }\quad x^\varepsilon_3 := \varepsilon x_3,
$$
so that $\partial^\varepsilon_\alpha = \partial_\alpha$ and $\partial^\varepsilon_3 = \displaystyle\frac{1}{\varepsilon} \partial_3$. To the unknown $\bm{u}^\varepsilon = (u^\varepsilon_i)$ and to the vector fields $\bm{v}^\varepsilon = (v^\varepsilon_i)$ appearing in the formulation of Problem~$\mathcal{P} (\Omega^\varepsilon)$ corresponding to a linearly elastic flexural shell, we then associate the \emph{scaled unknown} $\bm{u} (\varepsilon) = (u_i(\varepsilon))$ and the \emph{scaled vector fields} $\bm{v} = (v_i)$ by letting
$$
u_i (\varepsilon) (x) := u^\varepsilon_i (x^\varepsilon) \text{ and } v_i(x) := v^\varepsilon_i (x^\varepsilon),
$$
at each $x\in \overline{\Omega}$. Finally, we \emph{assume} that there exist functions $f^i \in L^2(\Omega)$ \emph{independent on} $\varepsilon$ such that the following \emph{assumptions on the data} hold:
\begin{equation*}
f^{i, \varepsilon} (x^\varepsilon) = \varepsilon^2 f^i(x) \text{ at each } x \in \Omega.
\end{equation*}

Note that the independence on $\varepsilon$ of the Lam\'{e} constants assumed in section~\ref{Sec:2} in the formulation of Problem~\ref{problem0} implicitly constituted another \emph{assumption on the data}.

In view of the proposed scaling, we define the ``scaled'' version of the geometrical entities introduced in section~\ref{Sec:1}:
\begin{align*}
	\bm{g}^i(\varepsilon ) (x) &:= \bm{g}^{i, \varepsilon} (x^\varepsilon) \textup{ at each } x\in \overline{\Omega},\\
	g(\varepsilon) (x) &:= g^\varepsilon (x^\varepsilon)
	\text{ and } A^{ijk\ell} (\varepsilon) (x) :=
	A^{ijk\ell, \varepsilon}(x^\varepsilon)
	\text{ at each } x\in \overline{\Omega}, \\
	e_{\alpha \| \beta} (\varepsilon; \bm{v}) &:= \frac12 (\partial_\beta v_\alpha + \partial_\alpha v_\beta ) - \Gamma^k_{\alpha \beta} (\varepsilon) v_k = e_{\beta \| \alpha} (\varepsilon; \bm{v}) , \\
	e_{\alpha \| 3} (\varepsilon; \bm{v}) = e_{3\| \alpha}(\varepsilon; \bm{v}) &:= \frac12 \left(\frac{1}{\varepsilon} \partial_3 v_\alpha + \partial_\alpha v_3 \right) - \Gamma^\sigma_{\alpha3} (\varepsilon) v_\sigma, \\
	e_{3\| 3} (\varepsilon ; \bm{v}) &:= \frac{1}{\varepsilon} \partial_3 v_3,
\end{align*}
where
\[
\Gamma^p_{ij} (\varepsilon) (x) := \Gamma^{p, \varepsilon}_{ij} (x^\varepsilon) \textup{ at each } x\in \overline{\Omega}.
\]

Define the space
$$
\bm{V}(\Omega):=\{\bm{v} = (v_i) \in \bm{H}^1(\Omega) ; \; \bm{v} = \textbf{0} \textup{ on } \gamma_0 \times \left[ -1, 1 \right]\},
$$
and define, for each $\varepsilon > 0$, the set
\begin{align*}
	\bm{U} (\varepsilon; \Omega) &:= \{\bm{v} = (v_i) \in \bm{V}(\Omega); \big(\bm{\theta} (y) + \varepsilon x_3 \bm{a}_3 (y) + v_i (x) \bm{g}^i(\varepsilon) (x)\big) \cdot \bm{q} \ge 0 \textup{ for a.a. } x = (y, x_3 ) \in \Omega\}.
\end{align*}

We are thus in a position to introduce the ``scaled'' version of Problem~\ref{problem0}, that will be denoted in what follows by $\mathcal{P}(\varepsilon; \Omega)$.

\begin{customprob}{$\mathcal{P} (\varepsilon; \Omega)$}
\label{problem0scaled}
Find $\bm{u}(\varepsilon)=(u_i(\varepsilon)) \in \bm{U}(\varepsilon; \Omega)$ that satisfies the following variational inequalities:
\begin{align*}
	\int_\Omega A^{ijk\ell}(\varepsilon) e_{k\| \ell} (\varepsilon; \bm{u}(\varepsilon)) \left(e_{i\| j} (\varepsilon; \bm{v}) - e_{i\|j} (\varepsilon; \bm{u}(\varepsilon))\right) \sqrt{g(\varepsilon)} \dd x
	\ge \varepsilon^2 \int_\Omega f^i (v_i - u_i(\varepsilon)) \sqrt{g (\varepsilon)} \dd x,
\end{align*}
for all $\bm{v}=(v_i) \in \bm{U} (\varepsilon; \Omega)$.
\bqed
\end{customprob}

\begin{theorem} \label{t:3}
The scaled unknown $\bm{u}(\varepsilon)$ is the unique solution of the variational Problem~\ref{problem0scaled}.
\end{theorem}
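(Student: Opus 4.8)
The plan is to transport the already-established existence and uniqueness for Problem~\ref{problem0} (Theorem~\ref{t:2}) over to the scaled domain $\Omega$ via the explicit change of variables $x^\varepsilon = (y,\varepsilon x_3) \leftrightarrow x = (y,x_3)$. Since this change of variables is an affine bijection between $\overline{\Omega^\varepsilon}$ and $\overline{\Omega}$, it induces an isomorphism between the relevant Sobolev spaces, and the whole argument is a bookkeeping exercise in checking that each ingredient of Problem~$\mathcal{P}(\Omega^\varepsilon)$ transforms into the corresponding ingredient of Problem~$\mathcal{P}(\varepsilon;\Omega)$ under the prescribed scalings on the unknowns, the geometrical quantities, and the data.

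First I would record that the map $\bm{v}^\varepsilon \mapsto \bm{v}$, $v_i(x) := v^\varepsilon_i(x^\varepsilon)$, is a linear bijection from $\bm{H}^1(\Omega^\varepsilon)$ onto $\bm{H}^1(\Omega)$, that it carries the boundary condition $\bm{v}^\varepsilon = \bm{0}$ on $\Gamma^\varepsilon_0 = \gamma_0 \times [-\varepsilon,\varepsilon]$ to $\bm{v} = \bm{0}$ on $\gamma_0 \times [-1,1]$, and hence maps $\bm{V}(\Omega^\varepsilon)$ bijectively onto $\bm{V}(\Omega)$. Next I would verify, using $\partial^\varepsilon_\alpha = \partial_\alpha$ and $\partial^\varepsilon_3 = \frac1\varepsilon \partial_3$ together with the definitions $\Gamma^p_{ij}(\varepsilon)(x) := \Gamma^{p,\varepsilon}_{ij}(x^\varepsilon)$, that $e^\varepsilon_{i\|j}(\bm{v}^\varepsilon)(x^\varepsilon) = e_{i\|j}(\varepsilon;\bm{v})(x)$ for each of the three cases $(\alpha,\beta)$, $(\alpha,3)$, $(3,3)$ — this is exactly why the scaled strains were defined the way they were in section~\ref{Sec:3}. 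Then, using $A^{ijk\ell}(\varepsilon)(x) := A^{ijk\ell,\varepsilon}(x^\varepsilon)$, $g(\varepsilon)(x) := g^\varepsilon(x^\varepsilon)$, the Jacobian identity $\dd x^\varepsilon = \varepsilon \dd x$, and the data assumption $f^{i,\varepsilon}(x^\varepsilon) = \varepsilon^2 f^i(x)$, I would check that
\[
\int_{\Omega^\varepsilon} A^{ijk\ell,\varepsilon} e^\varepsilon_{k\|\ell}(\bm{u}^\varepsilon)\bigl(e^\varepsilon_{i\|j}(\bm{v}^\varepsilon) - e^\varepsilon_{i\|j}(\bm{u}^\varepsilon)\bigr)\sqrt{g^\varepsilon}\dd x^\varepsilon = \varepsilon \int_\Omega A^{ijk\ell}(\varepsilon) e_{k\|\ell}(\varepsilon;\bm{u}(\varepsilon))\bigl(e_{i\|j}(\varepsilon;\bm{v}) - e_{i\|j}(\varepsilon;\bm{u}(\varepsilon))\bigr)\sqrt{g(\varepsilon)}\dd x,
\]
and similarly $\int_{\Omega^\varepsilon} f^{i,\varepsilon}(v^\varepsilon_i - u^\varepsilon_i)\sqrt{g^\varepsilon}\dd x^\varepsilon = \varepsilon^3 \int_\Omega f^i (v_i - u_i(\varepsilon))\sqrt{g(\varepsilon)}\dd x$; dividing the resulting inequality through by the common factor $\varepsilon > 0$ yields precisely the variational inequalities of Problem~\ref{problem0scaled}, with the $\varepsilon^2$ on the right-hand side.

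The remaining point, which is the one genuinely requiring care, is to confirm that the change of variables maps the admissible set $\bm{U}(\Omega^\varepsilon)$ onto $\bm{U}(\varepsilon;\Omega)$: the confinement inequality $\bigl(\bm{\Theta}(x^\varepsilon) + v^\varepsilon_i(x^\varepsilon)\bm{g}^{i,\varepsilon}(x^\varepsilon)\bigr)\cdot\bm{q} \ge 0$ for a.a.\ $x^\varepsilon \in \Omega^\varepsilon$ must be shown equivalent to $\bigl(\bm{\theta}(y) + \varepsilon x_3 \bm{a}_3(y) + v_i(x)\bm{g}^i(\varepsilon)(x)\bigr)\cdot\bm{q} \ge 0$ for a.a.\ $x \in \Omega$. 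This follows from the definition $\bm{\Theta}(x^\varepsilon) = \bm{\theta}(y) + x^\varepsilon_3 \bm{a}^3(y) = \bm{\theta}(y) + \varepsilon x_3 \bm{a}_3(y)$ (recalling $\bm{a}^3 = \bm{a}_3$ and $x^\varepsilon_3 = \varepsilon x_3$), the scaling $\bm{g}^i(\varepsilon)(x) = \bm{g}^{i,\varepsilon}(x^\varepsilon)$, and $v_i(x) = v^\varepsilon_i(x^\varepsilon)$, so the two affine expressions in $\bm{q}$ agree pointwise under the correspondence $x^\varepsilon \leftrightarrow x$, and a null set for one corresponds to a null set for the other since the Jacobian is the nonzero constant $\varepsilon$. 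Having matched feasible sets, objective, and constraint, the statement follows: $\bm{u}(\varepsilon)$ solves Problem~\ref{problem0scaled} if and only if $\bm{u}^\varepsilon$ solves Problem~\ref{problem0}, and the latter has a unique solution by Theorem~\ref{t:2}. The only mild subtlety to flag is the a.e.-sense handling of the confinement constraint under the bi-Lipschitz change of variables, but since the transformation is affine with constant nonzero Jacobian this is immediate.
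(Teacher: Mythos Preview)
Your proposal is correct and takes essentially the same approach as the paper: the paper's own proof simply asserts that Problem~\ref{problem0scaled} is a re-writing of Problem~\ref{problem0} in terms of the scaled quantities over the fixed domain~$\Omega$, and invokes Theorem~\ref{t:2}. You have carried out that re-writing explicitly (matching the strains, the elasticity tensor, the volume element, the data, and the admissible set under the affine change of variables), which is precisely what underlies the paper's one-sentence argument.
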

\begin{proof}
The variational Problem~\ref{problem0scaled} simply constitutes a re-writing of the variational Problem~\ref{problem0}, this time in terms of the scaled unknown $\bm{u}(\varepsilon)$, of the vector fields $\bm{v}$, and of the functions $f^i$, which are now all defined over the domain ${\Omega}$. Then the assertion follows from this observation.
\end{proof}

The functions $e_{i\|j} (\varepsilon; \bm{v})$ appearing in Problem~\ref{problem0scaled} are called the \emph{scaled linearized strains in curvilinear coordinates} associated with the scaled displacement vector field $v_i \bm{g}^i(\varepsilon)$.

For later purposes (like in Lemma \ref{lem:2} below), we also let
$$
\bm{g}_i (\varepsilon)(x) := \bm{g}^\varepsilon_i (x^\varepsilon) \textup{ at each } x \in \overline{\Omega}.
$$

Likewise, one can introduce the ``scaled'' version of Problem~\ref{problem1}, that will be denoted in what follows by $\mathcal{P}_{\kappa}(\varepsilon; \Omega)$.

\begin{customprob}{$\mathcal{P}_{\kappa}(\varepsilon; \Omega)$}
	\label{problem1scaled}
	Find $\bm{u}_\kappa(\varepsilon) =(u_{i,\kappa}(\varepsilon)) \in \bm{V}(\Omega)$ that satisfies the following variational equations:
	\begin{align*}
		&\int_\Omega A^{ijk\ell} (\varepsilon) e_{k\| \ell} (\varepsilon; \bm{u}_{\kappa}(\varepsilon)) e_{i\| j} (\varepsilon; \bm{v}) \sqrt{g(\varepsilon)} \dd x\\
		&\quad -\dfrac{\varepsilon}{\kappa}\int_{\Omega} \dfrac{\left\{[\bm{\theta} + \varepsilon x_3 \bm{a}_3 + u_{j,\kappa}(\varepsilon) \bm{g}^j(\varepsilon)]\cdot \bm{q}\right\}^{-}}{\sqrt{\sum_{\ell=1}^{3}|\bm{g}^\ell(\varepsilon) \cdot \bm{q}|^2}} (v_i \bm{g}^i(\varepsilon)\cdot\bm{q}) \sqrt{g(\varepsilon)} \dd x
		= \varepsilon^2 \int_\Omega f^i v_i \sqrt{g (\varepsilon)} \dd x,
	\end{align*}
	for all $\bm{v}=(v_i) \in \bm{V}(\Omega)$.
	\bqed
\end{customprob}

The following existence and uniqueness result can be thus easily proved in the same fashion as Theorem~\ref{ex-problem1}.

\begin{theorem} \label{t:3bis}
The scaled unknown $\bm{u}_{\kappa}(\varepsilon)$ is the unique solution of the variational Problem~\ref{problem1scaled}.
\end{theorem}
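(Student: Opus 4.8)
The plan is to establish existence and uniqueness for Problem~\ref{problem1scaled} by recognizing that, after scaling, it has exactly the same structure as Problem~\ref{problem1}, whose well-posedness was settled in Theorem~\ref{ex-problem1}. Concretely, I would first observe that the scaled variational equations are nothing but a re-expression over the fixed domain $\Omega$ of the variational equations defining Problem~\ref{problem1} over $\Omega^\varepsilon$, via the change of variables $x^\varepsilon_\alpha = x_\alpha$, $x^\varepsilon_3 = \varepsilon x_3$, together with the scalings $u_{i,\kappa}(\varepsilon)(x) := u^\varepsilon_{i,\kappa}(x^\varepsilon)$, $v_i(x) := v^\varepsilon_i(x^\varepsilon)$, and the assumed form $f^{i,\varepsilon}(x^\varepsilon) = \varepsilon^2 f^i(x)$ of the data. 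Under this dictionary the bilinear term, the penalty term, and the load term transform into precisely the three integrals appearing in Problem~\ref{problem1scaled} (the factor $\varepsilon^2$ on the right-hand side and the persistence of the factor $\varepsilon/\kappa$ in the penalty term are exactly the bookkeeping that this change of variables produces). Hence a vector field solves Problem~\ref{problem1scaled} if and only if its unscaled counterpart solves Problem~\ref{problem1}, and Theorem~\ref{ex-problem1} then transfers the conclusion.

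Alternatively, and this is the route the phrasing ``in the same fashion as Theorem~\ref{ex-problem1}'' suggests, I would redo the Minty--Browder argument directly on the fixed domain. Define the operator $A(\varepsilon) : \bm{V}(\Omega) \to \bm{V}'(\Omega)$ by
\[
\langle A(\varepsilon)\bm{u}, \bm{v}\rangle := \int_\Omega A^{ijk\ell}(\varepsilon) e_{k\|\ell}(\varepsilon;\bm{u}) e_{i\|j}(\varepsilon;\bm{v})\sqrt{g(\varepsilon)}\dd x - \frac{\varepsilon}{\kappa}\int_\Omega \big\{[\bm{\theta}+\varepsilon x_3\bm{a}_3 + u_i\bm{g}^i(\varepsilon)]\cdot\bm{q}\big\}^{-}(v_i\bm{g}^i(\varepsilon)\cdot\bm{q})\sqrt{g(\varepsilon)}\dd x,
\]
so that Problem~\ref{problem1scaled} reads $\langle A(\varepsilon)\bm{u}(\varepsilon),\bm{v}\rangle = \varepsilon^2\int_\Omega f^i v_i\sqrt{g(\varepsilon)}\dd x$ for all $\bm{v}\in\bm{V}(\Omega)$. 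Continuity of $A(\varepsilon)$ follows from the continuity of the scaled strain operators $e_{i\|j}(\varepsilon;\cdot):\bm{H}^1(\Omega)\to L^2(\Omega)$ (for fixed $\varepsilon>0$ these are bounded linear maps, since $\varepsilon^{-1}$ is just a constant), the boundedness of the coefficients $A^{ijk\ell}(\varepsilon)$ and $\sqrt{g(\varepsilon)}$ on $\overline{\Omega}$, and the Lipschitz continuity of the negative-part operator from Lemma~\ref{lem:3} composed with the bounded trace/multiplication maps $\bm{v}\mapsto v_i\bm{g}^i(\varepsilon)\cdot\bm{q}$. Strict monotonicity comes from two ingredients added together: the bilinear part is strictly monotone by the uniform positive-definiteness of the scaled elasticity tensor $(A^{ijk\ell}(\varepsilon))$ combined with an \emph{ad hoc} Korn inequality in curvilinear coordinates on $\Omega$ (the scaled analogue of Theorem~1.7-4 of~\cite{Ciarlet2000}), while the penalty part is monotone because $f\mapsto -\{f\}^{-}$ is monotone (Lemma~\ref{lem:3}) and the map $\bm{v}\mapsto v_i\bm{g}^i(\varepsilon)\cdot\bm{q}$ is linear, so the penalty contribution to $\langle A(\varepsilon)\bm{u}-A(\varepsilon)\bm{w},\bm{u}-\bm{w}\rangle$ is nonnegative. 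Coercivity follows from the same Korn inequality for the bilinear part, with the penalty term again only helping (its sign is favorable). The right-hand side is a continuous linear functional on $\bm{V}(\Omega)$. The Minty--Browder theorem (Theorem~9.14-1 of~\cite{PGCLNFAA}) then yields existence and uniqueness.

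I do not expect a genuine obstacle here; the only point requiring a modicum of care is making explicit that the scaled elasticity tensor and the scaled metric still satisfy the uniform positive-definiteness and positivity needed for Korn's inequality on the fixed domain $\Omega$ for each fixed $\varepsilon>0$ — but this is standard (it is exactly the content underlying the scaling methodology of~\cite{CiaLodsMia1996}, and for a single fixed $\varepsilon$ these are immediate transfers of the corresponding properties over $\Omega^\varepsilon$). Since the paper explicitly invites the shorter route, I would in fact write the proof in one or two sentences: note that Problem~\ref{problem1scaled} is the image of Problem~\ref{problem1} under the change of variables and scalings introduced above, hence inherits existence and uniqueness from Theorem~\ref{ex-problem1}, and optionally remark that a direct Minty--Browder argument as in the proof of Theorem~\ref{ex-problem1} applies verbatim with $e^\varepsilon_{i\|j}$, $A^{ijk\ell,\varepsilon}$, $\sqrt{g^\varepsilon}$ replaced by their scaled counterparts $e_{i\|j}(\varepsilon;\cdot)$, $A^{ijk\ell}(\varepsilon)$, $\sqrt{g(\varepsilon)}$.
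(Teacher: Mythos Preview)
Your proposal is correct and aligns with the paper's own proof: the paper simply observes that Problem~\ref{problem1scaled} is a re-writing of Problem~\ref{problem1} over the fixed domain $\Omega$ via the scalings on the unknown, the test fields, and the data, so existence and uniqueness are inherited from Theorem~\ref{ex-problem1}. Your alternative direct Minty--Browder argument is fine but unnecessary here; the one-sentence ``rewriting'' route you propose at the end is exactly what the paper does.
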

\begin{proof}
The variational Problem~\ref{problem1scaled} simply constitutes a re-writing of the variational Problem~\ref{problem1}, this time in terms of the scaled unknown $\bm{u}_\kappa(\varepsilon)$, of the vector fields $\bm{v}$, and of the functions $f^i$, which are now all defined over the domain $\Omega$. Then the assertion follows from this observation.
\end{proof}

By means of an analogous reasoning, a condition similar to~\eqref{lim:0} can be derived, i.e., for each $\delta>0$ we can find a number $\kappa_0=\kappa_0(\delta,\varepsilon)>0$ such that, for each $0<\kappa<\kappa_0$, it results
\begin{equation}\label{lim:1}
	\|\bm{u}(\varepsilon)-\bm{u}_\kappa(\varepsilon)\|_{\bm{H}^1(\Omega)} <\dfrac{\delta}{2},
\end{equation}
for each $\varepsilon>0$, where $\bm{u}(\varepsilon)$ and $\bm{u}_\kappa(\varepsilon)$ respectively denote the solutions of Problem~\ref{problem0scaled} and Problem~\ref{problem1scaled}.

In the same spirit as Theorem~\ref{th:beta-6} it can be shown that if the scaled solution $\bm{u}(\varepsilon)$ enjoys higher regularity then the threshold value $\kappa_0$ can be made independent of $\varepsilon$.

Without loss of generality (cf., e.g., \cite{Scholz1986}), given any $\varepsilon>0$, we restrict ourselves to considering penalty parameters with the following property:
\begin{equation}
\label{kappa}
0<\kappa= \sqrt{\varepsilon}.
\end{equation}

It is straightforward to observe that $\kappa \to 0$ as $\varepsilon \to 0$.
We observe that the variational Problem~\ref{problem0scaled} could have been equivalently written as a \emph{minimization problem}, thus mimicking that found in Theorem \ref{t:2}. It turns out, however, that its  formulation in Theorem \ref{t:3bis} as a set of \emph{penalized variational equations} is more convenient for the asymptotic analysis undertaken in section~\ref{Sec:4}.

It is immediately verified (cf., e.g., \cite{Ciarlet2000}) that \emph{other} assumptions on the data are possible that would give rise to the \emph{same} problem over the fixed domain $\Omega$. For instance, should the Lam\'{e} constants (now denoted) $\lambda^\varepsilon$ and $\mu^\varepsilon$ appearing in Problem~\ref{problem0} be of the form $\lambda^\varepsilon = \varepsilon^t \lambda$ and $\mu^\varepsilon = \varepsilon^t \mu$, where $\lambda \ge 0$ and $\mu$ are constants independent of $\varepsilon$ and $t$ is an arbitrary real number, the \emph{same} Problem~\ref{problem0scaled} arises if we assume that the components of the applied body force density are now of the form
$$
f^{i, \varepsilon} (x^\varepsilon) = \varepsilon^{2+t} f^i(x) \text{ at each } x\in \Omega,
$$
where the functions $f^i \in L^2(\Omega)$ are independent of $\varepsilon$.

The next lemma assembles various asymptotic properties as $\varepsilon \to 0$ of functions and vector fields appearing in the formulation of Problem~\ref{problem0scaled}; these properties will be repeatedly used in the proof of the convergence theorem (Theorem \ref{t:5}).

In the statement of the next preparatory lemma (cf., e.g., Theorems~3.3-1 and~3.3-2 of~\cite{Ciarlet2000}), the notation ``$O(\varepsilon)$'', or ``$O(\varepsilon^2)$'', stands for a remainder that is of order $\varepsilon$, or $\varepsilon^2$, with respect to the sup-norm over the set $\overline{\Omega}$, and any function, or vector-valued function, of the variable $y \in \overline{\omega}$, such as $a^{\alpha \beta}, b_{\alpha \beta}, \bm{a}^i$, etc. (all these are defined in section~\ref{Sec:1}) is identified with the function, or vector-valued function, of $x = (y, x_3) \in \overline{\Omega} = \overline{\omega} \times \left[-1, 1\right]$ that takes the same value at $x_3 = 0$ and is independent of $x_3 \in \left[-1, 1\right]$; for brevity, this extension from $\overline{\omega} $ to $\overline{\Omega}$ is designated with the same notation.

Recall that $\varepsilon > 0$ is implicitly assumed to be small enough so that $\bm{\Theta} : \overline{\Omega^\varepsilon} \to \mathbb{E}^3$ is an immersion.

\begin{lemma} \label{lem:2}
Let $\bm{\theta} \in \mathcal{C}^3(\overline{\omega};\mathbb{R}^3)$ be an immersion.
Let $\varepsilon_0$ be defined as in Theorem~3.1-1 of~\cite{Ciarlet2000}. The functions $A^{ijk\ell} (\varepsilon) = A^{jik\ell} (\varepsilon) = A^{k\ell ij} (\varepsilon)$ have the following properties:
\[
A^{ijk\ell} (\varepsilon) = A^{ijk\ell} ( 0 ) + O(\varepsilon) , \quad A^{\alpha \beta \sigma 3} (\varepsilon) = A^{\alpha 333} (\varepsilon ) = 0,
\]
for all $0<\varepsilon \le \varepsilon_0$, where
\begin{align*}
  A^{\alpha \beta \sigma \tau} (0) &= \lambda a^{\alpha \beta} a^{\sigma \tau} + \mu (a^{\alpha \sigma}a^{\beta \tau} + a^{\alpha \tau} a^{\beta \sigma}), \\
  A^{\alpha \beta 33} (0) &= \lambda a^{\alpha \beta} , \quad A^{\alpha 3 \sigma 3} (0) = \mu a^{\alpha \sigma}, \quad A^{3333}(0) = \lambda + 2 \mu,
\end{align*}
and there exists a constant $C_0 > 0$ such that
\[
\sum_{i, j} \left| t_{ij} \right|^2 \leq C_0 A^{ijk\ell} (\varepsilon) (x) t_{k\ell} t_{ij}
\]
for all $0<\varepsilon \le \varepsilon_0$, all $x \in \overline{\Omega}$, and all symmetric matrices $(t_{ij})$.

The functions $\Gamma^p_{ij} (\varepsilon)$ and $g(\varepsilon)$ have the following properties:
\begin{align*}
  \Gamma^\sigma_{\alpha \beta} (\varepsilon) &= \Gamma^\sigma_{\alpha \beta} - \varepsilon x_3 (\partial_\alpha b^\sigma_\beta + \Gamma^\sigma_{\alpha \tau} b^\tau_\beta - \Gamma^\tau_{\alpha \beta} b^\sigma_\tau) + O(\varepsilon^2), \\
  \Gamma^3_{\alpha \beta} (\varepsilon) &= b_{\alpha \beta} - \varepsilon x_3 b^\sigma_\alpha b_{\sigma \beta}, \quad \partial_3 \Gamma^p_{\alpha \beta} (\varepsilon) = O(\varepsilon), \\
  \Gamma^\sigma_{\alpha 3} (\varepsilon) &= - b^\sigma_\alpha - \varepsilon x_3 b^\tau_\alpha b^\sigma_\tau + O(\varepsilon^2), \quad \Gamma^3_{\alpha 3} (\varepsilon) = \Gamma^p_{33} (\varepsilon ) =0, \\
  g(\varepsilon) &= a + O(\varepsilon),
\end{align*}
for all $0<\varepsilon \le \varepsilon_0$ and all $x \in \overline{\Omega}$. In particular then, there exist constants $g_0$ and $g_1$ such that
\[
0 < g_0 \leq g(\varepsilon)(x) \leq g_1 \textup{ for all }0<\varepsilon \le \varepsilon_0 \textup{ and all }x \in \overline{\Omega}.
\]

The vector fields $\bm{g}_i (\varepsilon)$ and $\bm{g}^j(\varepsilon)$ have the following properties:
\begin{align*}
  \bm{g}_\alpha (\varepsilon) &= \bm{a}_\alpha - \varepsilon x_3 b^\sigma_\alpha \bm{a}_\sigma, \quad \bm{g}_3 (\varepsilon) = \bm{a}_3 , \\
\bm{g}^\alpha (\varepsilon) &= \bm{a}^\alpha + \varepsilon x_3 b^\alpha_\sigma \bm{a}^\sigma + O(\varepsilon^2), \quad \bm{g}^3(\varepsilon) = \bm{a}^3.
\end{align*}
\qed
\end{lemma}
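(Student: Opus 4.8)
The plan is to derive each asymptotic expansion from the explicit formula $\bm{\Theta}(x^\varepsilon) = \bm{\theta}(y) + x^\varepsilon_3 \bm{a}^3(y)$, combined with the scaling $x^\varepsilon_3 = \varepsilon x_3$, so that after scaling the geometry is governed by $\bm{\Theta}(\varepsilon)(x) := \bm{\theta}(y) + \varepsilon x_3 \bm{a}_3(y)$ (using $\bm{a}^3 = \bm{a}_3$). First I would compute the scaled covariant basis: since $\partial^\varepsilon_\alpha = \partial_\alpha$ and $\partial^\varepsilon_3 = \tfrac1\varepsilon \partial_3$, one gets $\bm{g}_\alpha(\varepsilon) = \bm{a}_\alpha + \varepsilon x_3 \partial_\alpha \bm{a}_3 = \bm{a}_\alpha - \varepsilon x_3 b^\sigma_\alpha \bm{a}_\sigma$, using the Weingarten relation $\partial_\alpha \bm{a}_3 = -b^\sigma_\alpha \bm{a}_\sigma$, and $\bm{g}_3(\varepsilon) = \bm{a}_3$ exactly. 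These two are \emph{polynomials} in $\varepsilon$ (degree $1$ and $0$), with no remainder, which is why the statement records them without an $O(\varepsilon^2)$ term. This is the cleanest step and I would do it first since everything else is built from it.

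Next I would obtain the contravariant basis $\bm{g}^j(\varepsilon)$ by inverting the relations $\bm{g}^j(\varepsilon)\cdot\bm{g}_i(\varepsilon) = \delta^j_i$. Writing $\bm{g}^\alpha(\varepsilon) = \bm{a}^\alpha + \varepsilon x_3 \bm{c}^\alpha + O(\varepsilon^2)$ and matching orders in $\varepsilon$ against $\bm{g}_\beta(\varepsilon) = \bm{a}_\beta - \varepsilon x_3 b^\sigma_\beta \bm{a}_\sigma$ forces $\bm{c}^\alpha = b^\alpha_\sigma \bm{a}^\sigma$; and $\bm{g}^3(\varepsilon) = \bm{a}^3$ follows since $\bm{a}_3$ is unchanged and orthogonal to the $\bm{a}_\alpha$-span to the appropriate order. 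From here the metric components follow: $g_{ij}(\varepsilon) = \bm{g}_i(\varepsilon)\cdot\bm{g}_j(\varepsilon)$ gives $g(\varepsilon) = \det(g_{ij}(\varepsilon)) = a + O(\varepsilon)$, and then continuity on the compact set $\overline{\Omega}$ together with $a > 0$ (immersion hypothesis) yields the uniform bounds $0 < g_0 \le g(\varepsilon) \le g_1$ for $\varepsilon$ small. The Christoffel symbols $\Gamma^p_{ij}(\varepsilon) = \partial_i \bm{g}_j(\varepsilon) \cdot \bm{g}^{p}(\varepsilon)$ are then computed by differentiating the polynomial expressions for $\bm{g}_j(\varepsilon)$ and multiplying by the expansions for $\bm{g}^p(\varepsilon)$, collecting terms; the identities $\Gamma^3_{\alpha3}(\varepsilon) = \Gamma^p_{33}(\varepsilon) = 0$ are immediate because $\bm{g}_3(\varepsilon) = \bm{a}_3$ is constant in $x_3$ and of unit length, so $\partial_3 \bm{g}_3(\varepsilon) = 0$ and $\partial_\alpha \bm{g}_3(\varepsilon)\cdot\bm{a}_3 = \tfrac12\partial_\alpha(|\bm{a}_3|^2) = 0$.

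For the elasticity tensor, $A^{ijk\ell}(\varepsilon) = \lambda g^{ij}(\varepsilon)g^{k\ell}(\varepsilon) + \mu(g^{ik}(\varepsilon)g^{j\ell}(\varepsilon) + g^{i\ell}(\varepsilon)g^{jk}(\varepsilon))$, so I would first expand the contravariant metric $g^{ij}(\varepsilon) = \bm{g}^i(\varepsilon)\cdot\bm{g}^j(\varepsilon)$: at $\varepsilon = 0$ this is $a^{\alpha\beta}$ on the tangential block, $0$ for the mixed $\alpha3$ entries (since $\bm{a}^\alpha\cdot\bm{a}^3 = 0$), and $1$ for the $33$ entry. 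Substituting into the formula for $A^{ijk\ell}(\varepsilon)$ and evaluating at $\varepsilon = 0$ gives the four listed formulas for $A^{\alpha\beta\sigma\tau}(0)$, $A^{\alpha\beta33}(0)$, $A^{\alpha3\sigma3}(0)$, $A^{3333}(0)$, and the vanishing relations $A^{\alpha\beta\sigma3}(\varepsilon) = A^{\alpha333}(\varepsilon) = 0$ hold \emph{for all} small $\varepsilon$, not just $\varepsilon = 0$, because $g^{\alpha3}(\varepsilon) = 0$ identically — this uses that $\bm{g}^3(\varepsilon) = \bm{a}^3$ exactly and $\bm{g}^\alpha(\varepsilon)$ lies in the tangent plane span of $\{\bm{a}^1,\bm{a}^2\}$; I should check this orthogonality is exact and not merely to order $\varepsilon$, which follows since $\bm{g}_3(\varepsilon) = \bm{a}_3$ and the duality relations force $\bm{g}^\alpha(\varepsilon)\cdot\bm{a}_3 = 0$. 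The uniform positive-definiteness $\sum_{i,j}|t_{ij}|^2 \le C_0 A^{ijk\ell}(\varepsilon)(x)t_{k\ell}t_{ij}$ is the one genuinely non-formal point: it follows from the classical fact (stated e.g. in Ciarlet's monograph, which the lemma already cites) that an isotropic elasticity tensor built from a uniformly positive-definite metric $(g^{ij})$ with $\lambda \ge 0$, $\mu > 0$ is itself uniformly positive-definite, the constant $C_0$ being controlled by $\mu$ and the uniform bounds on $g^{ij}(\varepsilon)$ over $\overline{\Omega}\times(0,\varepsilon_0]$, which in turn come from the metric bounds $g_0 \le g(\varepsilon) \le g_1$ established above. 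The main obstacle, such as it is, is bookkeeping: keeping the $O(\varepsilon)$ versus $O(\varepsilon^2)$ remainders straight and ensuring that the exact (remainder-free) identities — $\bm{g}_3(\varepsilon) = \bm{a}_3$, $\Gamma^3_{\alpha\beta}(\varepsilon)$ being exactly affine in $\varepsilon x_3$, $A^{\alpha\beta\sigma3}(\varepsilon) \equiv 0$ — are recognized as exact rather than merely asymptotic; but since this lemma is explicitly attributed to Theorems~3.3-1 and~3.3-2 of~\cite{Ciarlet2000}, the cleanest presentation is to carry out the $\bm{g}_i(\varepsilon)$ computation in detail and then cite the reference for the remaining expansions and for the positive-definiteness constant.
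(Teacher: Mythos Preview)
Your proposal is correct, and in fact it goes well beyond what the paper does: the paper gives no proof at all for this lemma, recording it with a bare \qed\ and attributing every assertion to Theorems~3.3-1 and~3.3-2 of~\cite{Ciarlet2000}. Your sketch is precisely how those theorems are proved in the cited reference, and your observation that the identities $\bm{g}_3(\varepsilon)=\bm{a}_3$, $g^{\alpha 3}(\varepsilon)=0$, and hence $A^{\alpha\beta\sigma 3}(\varepsilon)=A^{\alpha 333}(\varepsilon)=0$ hold \emph{exactly} (not merely asymptotically) is the one subtlety worth flagging, and you flag it correctly via the duality relation $\bm{g}^\alpha(\varepsilon)\cdot\bm{g}_3(\varepsilon)=0$ combined with $\bm{g}_3(\varepsilon)=\bm{g}^3(\varepsilon)=\bm{a}_3$.
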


We recall (cf., e.g., \cite{Ciarlet2000}), that the various relations and estimates in Lemma~\ref{lem:2} hold in fact for \emph{any} family of linearly elastic shells, i.e., irrespective of whether these shells are flexural ones or not.

When one considers a family of linearly elastic \emph{flexural shells} whose thickness $2 \varepsilon$ approaches zero, a specific \emph{Korn's inequality in curvilinear coordinates} (cf., e.g., Theorem~4.1 of~\cite{CiaLodsMia1996} or Theorem~5.3-1 of~\cite{Ciarlet2000}) holds over the \emph{fixed} domain $\Omega = \omega \times \left] -1,1 \right[$, according to the following theorem. That the constant $C_1$ that appears in this inequality is \emph{independent of} $\varepsilon > 0$ plays a key role in the asymptotic analysis of such a family (see part (i) of the proof of Theorem~\ref{t:5}).

\begin{theorem}\label{t:4}
Let $\bm{\theta} \in \mathcal{C}^3(\overline{\omega};\mathbb{R}^3)$ be an immersion.
Let there be given a family of linearly elastic flexural shells with the same middle surface $\bm{\theta} (\overline{\omega})$ and thickness $2 \varepsilon > 0$. Define the space
$$
\bm{V} (\Omega) := \{\bm{v} = (v_i) \in \bm{H}^1(\Omega) ; \; \bm{v} = \bm{0} \textup{ on }  \gamma_0  \times \left[-1,1\right]\}.
$$
Then there exist constants $\varepsilon_1 > 0$ and $C_1 > 0$ such that
$$
  \left\{ \sum_i \left\| v_i \right\|^2_{H^1(\Omega)}\right\}^{1/2} \le \dfrac{C_1}{\varepsilon} \left\{ \sum_{i,j} \left\| e_{i\|j} (\varepsilon; \bm{v}) \right\|^2_{L^2(\Omega)} \right\}^{1/2}
$$
for all $0 < \varepsilon \leq \varepsilon_1$ and all $\bm{v}=(v_i) \in \bm{V} (\Omega)$.
\qed
\end{theorem}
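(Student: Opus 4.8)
The plan is to argue by contradiction, following Ciarlet, Lods and Miara (this is Theorem~4.1 of~\cite{CiaLodsMia1996}; see also Theorem~5.3-1 of~\cite{Ciarlet2000}). Were the asserted inequality false, then --- since for each \emph{fixed} $\varepsilon$ it does hold, with some a priori $\varepsilon$-dependent constant, being a three-dimensional Korn inequality in curvilinear coordinates with the boundary condition imposed on the set $\gamma_0 \times [-1,1]$ of positive surface measure --- there would exist sequences $\varepsilon^n \to 0^+$ and $\bm{v}^n = (v^n_i) \in \bm{V}(\Omega)$ with
\[
\sum_i \|v^n_i\|^2_{H^1(\Omega)} = 1 \quad \text{and} \quad \frac{1}{\varepsilon^n}\Big(\sum_{i,j}\|e_{i\|j}(\varepsilon^n;\bm{v}^n)\|^2_{L^2(\Omega)}\Big)^{1/2} \to 0,
\]
so in fact $\|e_{i\|j}(\varepsilon^n;\bm{v}^n)\|_{L^2(\Omega)} = o(\varepsilon^n)$. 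Passing to a subsequence, $\bm{v}^n \rightharpoonup \bm{v}$ in $\bm{H}^1(\Omega)$, $\bm{v}^n \to \bm{v}$ in $\bm{L}^2(\Omega)$ (Rellich--Kondrachov), and $\bm{v} \in \bm{V}(\Omega)$.

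Using the explicit expressions for $e_{3\|3}$, $e_{\alpha\|3}$ and $e_{\alpha\|\beta}$ together with the uniform bounds and limits of Lemma~\ref{lem:2} for $\Gamma^p_{ij}(\varepsilon)$, I would first determine the structure of the limit. From $e_{3\|3}(\varepsilon^n;\bm{v}^n) = \frac{1}{\varepsilon^n}\partial_3 v^n_3$ and from the expression for $e_{\alpha\|3}(\varepsilon^n;\bm{v}^n)$, whose terms not involving $\partial_3$ remain bounded in $L^2(\Omega)$, one gets $\|\partial_3 v^n_i\|_{L^2(\Omega)} \to 0$; hence $\partial_3 v_i = 0$, so $\bm{v}$ is independent of $x_3$ and may be identified with a field $\bm{\eta} = (\eta_i)$ with $\eta_\alpha \in H^1(\omega)$, $\eta_3 \in H^1(\omega)$ and $\eta_i = 0$ on $\gamma_0$. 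Passing to the limit in $e_{\alpha\|\beta}(\varepsilon^n;\bm{v}^n) \to 0$, using $\Gamma^\sigma_{\alpha\beta}(\varepsilon^n)\to\Gamma^\sigma_{\alpha\beta}$ and $\Gamma^3_{\alpha\beta}(\varepsilon^n)\to b_{\alpha\beta}$ uniformly together with $v^n_k \to \eta_k$ in $L^2(\omega)$, gives $\gamma_{\alpha\beta}(\bm{\eta}) = 0$ in $\omega$; a regularity result for this equation (the weak form of the infinitesimal rigid displacement lemma on a surface, available since $\bm{\theta}\in\mathcal{C}^3$) then upgrades this to $\eta_3 \in H^2(\omega)$.

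The crux is to exploit that the strains are $o(\varepsilon^n)$, not merely $O(\varepsilon^n)$. Expanding $e_{\alpha\|\beta}(\varepsilon^n;\bm{v}^n)$ to first order in $\varepsilon^n$ via Lemma~\ref{lem:2}, dividing by $\varepsilon^n$ and passing to the limit --- which requires controlling the $x_3$-moments of $\bm{v}^n$ and of the quantities $\frac12(\partial_\alpha v^n_\beta + \partial_\beta v^n_\alpha) - \Gamma^\sigma_{\alpha\beta}v^n_\sigma - b_{\alpha\beta}v^n_3$, and is where essentially all the technical work lies --- should produce $\rho_{\alpha\beta}(\bm{\eta}) = 0$ in $\omega$; and inspecting the transverse ($x_3$-linear) part of the displacement field on $\gamma_0 \times [-1,1]$, on which $\bm{v}^n$ vanishes, should give $\partial_\nu\eta_3 = 0$ on $\gamma_0$. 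Thus $\bm{\eta}\in\bm{V}_K(\omega)$ with $\gamma_{\alpha\beta}(\bm{\eta}) = \rho_{\alpha\beta}(\bm{\eta}) = 0$, so Theorem~\ref{t:1} forces $\bm{\eta} = \bm{0}$. (Note that, in contrast with the membrane case, one does \emph{not} conclude $\bm{\eta}=\bm{0}$ from $\bm{\eta}\in\bm{V}_F(\omega)$, which here is nonzero by hypothesis, but from the vanishing of \emph{both} the linearized change of metric and change of curvature tensors.)

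To conclude, I would invoke an auxiliary \emph{Korn inequality without boundary conditions that is uniform in} $\varepsilon$: constants $c_M = c_M(\omega,\bm{\theta}) > 0$ and $\varepsilon_1 > 0$ such that
\[
\Big(\sum_i\|v_i\|^2_{H^1(\Omega)}\Big)^{1/2} \le c_M\Big(\sum_i\|v_i\|^2_{L^2(\Omega)} + \sum_{i,j}\|e_{i\|j}(\varepsilon;\bm{v})\|^2_{L^2(\Omega)}\Big)^{1/2}
\]
for all $0 < \varepsilon \le \varepsilon_1$ and all $\bm{v} = (v_i) \in \bm{H}^1(\Omega)$; this is established separately by transporting the classical three-dimensional Korn inequality on $\Omega$ through $\bm{\Theta}$ and tracking the $\varepsilon$-dependence by means of Lemma~\ref{lem:2}. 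Applied to $\bm{v}^n$, its right-hand side tends to $0$ --- the $L^2$ term because $\bm{v}^n\to\bm{\eta}=\bm{0}$ in $\bm{L}^2(\Omega)$, the strain term because it is $o(\varepsilon^n)$ --- hence $\sum_i\|v^n_i\|^2_{H^1(\Omega)}\to 0$, contradicting the normalization. The two steps carrying genuine content, and thus the main obstacles, are the $\varepsilon$-uniform Korn inequality without boundary conditions and, above all, the passage from the $o(\varepsilon^n)$ bound to the identities $\rho_{\alpha\beta}(\bm{\eta}) = 0$ and $\partial_\nu\eta_3 = 0$ on $\gamma_0$.
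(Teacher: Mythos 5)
The paper itself gives no proof of Theorem~\ref{t:4}: it is quoted verbatim from the literature (Theorem~4.1 of \cite{CiaLodsMia1996}, Theorem~5.3-1 of \cite{Ciarlet2000}) and closed with a \emph{qed}. Your reconstruction follows the architecture of that cited proof correctly: contradiction and normalization, weak limit independent of $x_3$, vanishing of both $\gamma_{\alpha\beta}(\bm{\eta})$ and $\rho_{\alpha\beta}(\bm{\eta})$, Theorem~\ref{t:1} to force $\bm{\eta}=\bm{0}$, then a compactness step to contradict $\|\bm{v}^n\|_{\bm{H}^1(\Omega)}=1$. But two of the steps are genuinely incomplete. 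The first is the one you flag yourself: the derivation of $\eta_3\in H^2(\omega)$, $\partial_\nu\eta_3=0$ on $\gamma_0$ and $\rho_{\alpha\beta}(\bm{\eta})=0$ from the $o(\varepsilon^n)$ strain bound is asserted, not proved, and it is the bulk of the argument. In the literature it is the content of a separate preparatory theorem on sequences with $O(\varepsilon)$ scaled strains (Theorem~5.2-1/5.2-2 of \cite{Ciarlet2000}, the very result invoked in part~(iii) of the proof of Theorem~\ref{t:5} of this paper), which yields $\tfrac{1}{\varepsilon^n}e_{\alpha\|\beta}(\varepsilon^n;\bm{v}^n)\rightharpoonup e^1_{\alpha\|\beta}$ with $-\partial_3 e^1_{\alpha\|\beta}=\rho_{\alpha\beta}(\overline{\bm{v}})$; under your $o(\varepsilon^n)$ hypothesis one gets $e^1_{\alpha\|\beta}=0$ and hence $\rho_{\alpha\beta}(\overline{\bm{v}})=0$. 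This gap is fillable by citation, but as written it is a gap.

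The second gap is a step that would actually fail: the auxiliary ``$\varepsilon$-uniform Korn inequality without boundary conditions'' is false as you state it. Take a flat plate, $\bm{\theta}(y)=(y_1,y_2,0)$, set $w_k(y)=k^{-1}\sin(ky_1)$ and consider the Kirchhoff--Love field $v_3=w_k$, $v_\alpha=-\varepsilon x_3\partial_\alpha w_k$. Then $e_{3\|3}(\varepsilon;\bm{v})=e_{\alpha\|3}(\varepsilon;\bm{v})=0$ and $e_{\alpha\|\beta}(\varepsilon;\bm{v})=-\varepsilon x_3\partial_{\alpha\beta}w_k$, so your right-hand side is of order $k^{-1}+\varepsilon+\varepsilon k$, while the left-hand side is bounded below by $\|\partial_1 v_3\|_{L^2(\Omega)}\sim 1$; choosing $\varepsilon=k^{-2}$ and letting $k\to\infty$ violates the inequality (a cutoff makes the field vanish near $\gamma_0\times[-1,1]$, so adding boundary conditions does not save it). The correct uniform Korn inequality without boundary conditions carries a factor $1/\varepsilon$ on the right-hand side, and with that factor your concluding argument no longer closes: one would need $\|\bm{v}^n\|_{\bm{L}^2(\Omega)}=o(\varepsilon^n)$, which Rellich--Kondra\v{s}ov does not provide. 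The cited proof concludes differently: for instance, the tangential components are handled by a slice-wise two-dimensional Korn inequality on $\omega$ (using $\bm{v}^n=\bm{0}$ on $\gamma_0\times[-1,1]$ together with the strong $L^2$ convergence of $e_{\alpha\|\beta}(\varepsilon^n;\bm{v}^n)$ and of $\bm{v}^n$), while the strong convergence of $\partial_\alpha v^n_3$ requires a separate, finer argument. You should either reproduce that concluding step or replace your auxiliary inequality by one that is actually true and still suffices.
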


\section{Rigorous asymptotic analysis}
\label{Sec:4}

The ultimate goal of this paper is to show, in the same spirit as~\cite{CiaLodsMia1996} (see also Theorem~6.2-1 of~\cite{Ciarlet2000}), that \emph{the solutions $\bm{u}(\varepsilon)$ of the} (scaled) \emph{three-dimensional problems~\ref{problem0} converge - as $\varepsilon$ approaches zero - to the solution of a two-dimensional problem}, denoted by $\mathcal{P}_F(\omega)$ in what follows.

The proof proposed in~\cite{CiaLodsMia1996} (see also Theorem~6.2-1 of~\cite{Ciarlet2000}) resorts, however, to the usage of a specific vector field that was first introduced by Miara and Sanchez-Palencia~\cite{MiaSan1996}. This construction argument is, in general, not applicable to the context of variational inequalities, for which the test functions are chosen in a nonempty, closed and convex subset of a certain space.

In order to overcome this difficulty, we first prove that, under the assumption~\eqref{kappa}, \emph{the solutions $\bm{u}_{\kappa}(\varepsilon)$ of} Problem~\ref{problem1scaled} converge - as $\varepsilon$ approaches zero - to the solution of the variational problem $\mathcal{P}_F(\omega)$.

Define the set
\begin{align*}
	\bm{U}_F(\omega) &:= \{\bm{\eta} = (\eta_i) \in \bm{V}_F(\omega);\big(\bm{\theta} (y) + \eta_i (y) \bm{a}^i(y)\big) \cdot \bm{q} \ge 0 \textup{ for a.a. } y \in \omega \},
\end{align*}
where the space $\bm{V}_F(\omega)$ has been defined in section~\ref{Sec:3}. We observe that the set $\bm{U}_F(\omega)$ is nonempty, closed and convex in the space $\bm{V}_F(\omega)$. The vector fields $\bm{a}^i$ and the functions $\Gamma^\sigma_{\alpha \beta}, b_{\alpha \beta}, a^{\alpha \beta}, a$, and $\gamma_{\alpha \beta}(\bm{\eta})$, have been defined in section~\ref{Sec:1}.
We are thus in a position to define the two-dimensional problem~$\mathcal{P}_F(\omega)$ as follows:

\begin{customprob}{$\mathcal{P}_F(\omega)$}
\label{problemLim}
Find $\bm{\zeta}=(\zeta_i) \in \bm{U}_F(\omega)$ that satisfies the following variational inequalities:
$$
\dfrac{1}{3}\int_\omega a^{\alpha \beta \sigma \tau} \rho_{\sigma \tau}(\bm{\zeta}) \rho_{\alpha \beta} (\bm{\eta} - \bm{\zeta}) \sqrt{a} \dd y \ge \int_\omega p^i (\eta_i - \zeta_i) \sqrt{a} \dd y,
$$
for all $\bm{\eta} = (\eta_i) \in \bm{U}_F(\omega)$, where
$$
a^{\alpha \beta \sigma \tau} := \frac{4\lambda \mu}{\lambda + 2 \mu} a^{\alpha \beta} a^{\sigma \tau} + 2\mu \left(a^{\alpha \sigma} a^{\beta \tau} + a^{\alpha \tau} a^{\beta \sigma}\right) \textup{ and } p^i := \int^1_{-1} f^i \dd x_3.
$$
\bqed
\end{customprob}

In the same spirit as Theorem~\ref{t:2}, it can be show that Problem~\ref{problemLim} admits one and only one solution.

We are now ready to show that, under the assumption~\eqref{kappa}, \emph{the solutions $\bm{u}_{\kappa}(\varepsilon)$ of} Problem~\ref{problem1scaled} converge - as $\varepsilon$ approaches zero - to the solution of Problem~\ref{problemLim}.

Differently from the linearly elastic elliptic membrane case, we will see that problems arise when one has to define an appropriate test vector field for recovering the limit model, which is the main objective of Theorem~\ref{t:5} (cf. part (iv)). Since this test vector field will have to depend on the partial derivatives of the displacement, it is hard to find an expression for it that takes into account the geometrical constraint as well. For this reason, the penalty method seems to be the most convenient technique to attack this problem.

\begin{theorem} \label{t:5}
  Let $\omega$ be a domain in $\mathbb{R}^2$, let $\bm{\theta} \in \mathcal{C}^3(\overline{\omega}; \mathbb{E}^3)$ be the middle surface of a \emph{flexural shell}, let $\gamma_0$ be a non-zero length portion of the boundary $\gamma$ (cf. section~\ref{Sec:3}) and let $\bm{q} \in \mathbb{E}^3$ be a non-zero vector given once and for all. Let us consider the \emph{non-trivial} space (cf. section~\ref{Sec:3})
\begin{equation*}
\bm{V}_F(\omega):=\{\bm{\eta}=(\eta_i) \in H^1(\omega)\times H^1(\omega)\times H^2(\omega); \gamma_{\alpha \beta}(\bm{\eta})=0 \textup{ in }\omega \textup{ and }\eta_i=\partial_{\nu}\eta_3=0 \textup{ on }\gamma_0\},
\end{equation*}
and let us define the set
\begin{align*}
\bm{U}_F (\omega) &:= \{\bm{\eta} = (\eta_i) \in \bm{V}_F(\omega);\big(\bm{\theta} (y) + \eta_i (y) \bm{a}^i(y)\big) \cdot \bm{q} \ge 0 \textup{ for a.a. } y \in \omega \}.
\end{align*}
%and assume that the immersion $\bm{\theta}$ is such that
%\[
%d:= \inf_{y\in \overline{\omega}} (\bm{\theta} (y) \cdot \bm{q}) > 0.
%\]

Let there be given a family of linearly elastic flexural shells with the same middle surface $\bm{\theta} (\overline{\omega})$ and thickness $2 \varepsilon > 0$, and let $\bm{u}_\kappa(\varepsilon) \in \bm{V}(\Omega)$ denote for each $\varepsilon > 0$ the unique solution of Problem~\ref{problem1scaled}, where the penalty parameter $\kappa$ is assumed to be as in~\eqref{kappa}.

Then there exists $\bm{u} \in \bm{H}^1(\Omega)$ independent of the variable $x_3$ and satisfying
\begin{align*}
&{\bm{u}=\bf0} \textup{ on }  \Gamma_0=\gamma_0 \times \left[-1, 1\right],\\
&\bm{u}_\kappa(\varepsilon) \to \bm{u} \textup{ in } \bm{H}^1(\Omega) \textup{ as } \varepsilon \to 0.
\end{align*}

Define the average
\[
\overline{\bm{u}} = (\overline{u}_i) := \frac12 \int^1_{-1} \bm{u} \dd x_3.
\]
Then
\[
\overline{\bm{u}} = \bm{\zeta},
\]
where $\bm{\zeta}$ is the unique solution to the two-dimensional variational Problem~\ref{problemLim}.
\end{theorem}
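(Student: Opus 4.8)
The plan is to follow the now-classical structure of the Ciarlet--Lods--Miara asymptotic analysis, but adapted to the penalized formulation so as to avoid constructing an explicit admissible test field. The argument naturally splits into several parts, mirroring the proof of Theorem~6.2-1 in \cite{Ciarlet2000}.

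\textbf{Part (i): a priori estimates and extraction of a weak limit.} First I would derive uniform bounds on $\bm{u}_\kappa(\varepsilon)$. Testing the variational equations of Problem~\ref{problem1scaled} with $\bm{v} = \bm{u}_\kappa(\varepsilon)$, using the uniform positive-definiteness of $A^{ijk\ell}(\varepsilon)$ (Lemma~\ref{lem:2}) together with the sign of the penalty term (which satisfies $-\{w\}^{-}\, w \le 0$, hence contributes favourably after moving it to the right-hand side), and invoking the flexural Korn inequality of Theorem~\ref{t:4} with its $\varepsilon$-independent constant, I expect to obtain $\|\bm{u}_\kappa(\varepsilon)\|_{\bm{H}^1(\Omega)} \le C$ and, more precisely, $\frac1\varepsilon\{\sum_{i,j}\|e_{i\|j}(\varepsilon;\bm{u}_\kappa(\varepsilon))\|^2_{L^2(\Omega)}\}^{1/2} \le C$, with $C$ independent of $\varepsilon$ (here $\kappa=\sqrt\varepsilon$ enters when controlling the penalty contribution). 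Consequently there is a subsequence with $\bm{u}_\kappa(\varepsilon)\rightharpoonup\bm{u}$ in $\bm{H}^1(\Omega)$ and $\frac1\varepsilon e_{i\|j}(\varepsilon;\bm{u}_\kappa(\varepsilon))\rightharpoonup e^1_{i\|j}$ weakly in $L^2(\Omega)$ for some limit strains.

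\textbf{Part (ii): the limit is a Kirchhoff--Love / flexural displacement lying in $\bm{U}_F(\omega)$.} Passing to the limit in the expressions for $e_{i\|j}(\varepsilon;\bm{u}_\kappa(\varepsilon))$ using the expansions in Lemma~\ref{lem:2}, the boundedness of the scaled strains forces $e_{i\|3}(\varepsilon;\cdot)\to 0$, which yields $\partial_3 u_3 = 0$, $\partial_3 u_\alpha + \partial_\alpha u_3 = 0$, so $\bm{u}$ is of Kirchhoff--Love type and its average $\overline{\bm{u}}$ satisfies $\gamma_{\alpha\beta}(\overline{\bm{u}})=0$ (using that the limit of $\frac1\varepsilon e_{\alpha\|\beta}$ is bounded, whence $e_{\alpha\|\beta}(0;\bm{u})=\gamma_{\alpha\beta}(\overline{\bm{u}})=0$); together with the boundary conditions inherited from $\bm{V}(\Omega)$ this gives $\overline{\bm{u}}\in\bm{V}_F(\omega)$. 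The confinement constraint passes to the limit by the following argument: the penalized equation, tested with a fixed $\bm{v}\in\bm{V}(\Omega)$ and multiplied by $\kappa/\varepsilon$, forces $\|\{[\bm\theta+\varepsilon x_3\bm a_3 + u_{i,\kappa}(\varepsilon)\bm g^i(\varepsilon)]\cdot\bm q\}^{-}\|_{L^2(\Omega)}\to 0$; combined with strong $L^2$ convergence of $\bm{u}_\kappa(\varepsilon)$ and Lemma~\ref{lem:2}, the negative part of $(\bm\theta+u_i\bm a^i)\cdot\bm q$ vanishes a.e., so $\overline{\bm u}\in\bm U_F(\omega)$ (Lemma~\ref{lem:1} guarantees this set is nontrivial and the constraint is genuinely a constraint).

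\textbf{Part (iii): identification of the limit variational inequalities, and upgrade to strong convergence.} This is the step I expect to be the crux. Rather than building an admissible competitor inside $\bm U(\varepsilon;\Omega)$ (the obstruction pointed out before the theorem, since any natural test field must involve derivatives of the displacement), I would work directly with the penalized \emph{equations}: for arbitrary $\bm\eta\in\bm U_F(\omega)$, insert a suitable recovery sequence $\bm v(\varepsilon)\to\bm\eta$ in $\bm H^1(\Omega)$ of Miara--Sanchez type (\cite{MiaSan1996}) as a test function, exploit that the penalty term evaluated against $\bm v(\varepsilon)-\bm u_\kappa(\varepsilon)$ has a sign because $(\bm\theta+\eta_i\bm a^i)\cdot\bm q\ge 0$ and the penalty is monotone (Lemma~\ref{lem:3}), and pass to the $\liminf$. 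The quadratic term is handled by weak lower semicontinuity; dividing through by $\varepsilon^2$, the dominant contribution comes from the scaled change-of-curvature tensor $\rho_{\alpha\beta}$, and after computing $\lim \frac1\varepsilon e_{\alpha\|\beta}(\varepsilon;\bm u_\kappa(\varepsilon))$ in terms of $\rho_{\alpha\beta}(\overline{\bm u})$ (exactly as in \cite{CiaLodsMia1996}, using that the zeroth-order part vanishes) and eliminating the transverse strains via the block structure of $A^{ijk\ell}(0)$ to produce the reduced modulus $a^{\alpha\beta\sigma\tau}$, one recovers
\[
\tfrac13\int_\omega a^{\alpha\beta\sigma\tau}\rho_{\sigma\tau}(\overline{\bm u})\rho_{\alpha\beta}(\bm\eta-\overline{\bm u})\sqrt a\,\dd y \ge \int_\omega p^i(\eta_i-\overline u_i)\sqrt a\,\dd y.
\]
Since this holds for all $\bm\eta\in\bm U_F(\omega)$ and Problem~\ref{problemLim} has a unique solution, $\overline{\bm u}=\bm\zeta$. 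Finally, strong convergence $\bm u_\kappa(\varepsilon)\to\bm u$ in $\bm H^1(\Omega)$ follows by the standard energy-comparison trick: taking $\bm\eta=\overline{\bm u}=\bm\zeta$ in the limit inequality and $\bm v=\bm u_\kappa(\varepsilon)$ in the penalized equation shows that $\limsup$ of the elastic energy of $\bm u_\kappa(\varepsilon)$ does not exceed that of $\bm u$, which with weak lower semicontinuity and the uniform ellipticity/Korn bound (Theorem~\ref{t:4}, Lemma~\ref{lem:2}) upgrades weak to strong convergence of the whole family; that the full sequence converges, not just a subsequence, follows from uniqueness of the limit. The main obstacle, as indicated, is Part (iii): getting the penalty contribution of $\bm u_\kappa(\varepsilon)$ to disappear in the limit with the \emph{correct sign} after pairing with $\bm v(\varepsilon)-\bm u_\kappa(\varepsilon)$, which requires the relation $\kappa=\sqrt\varepsilon$ and careful bookkeeping of the $\varepsilon^{-1}$ and $\varepsilon^{-1/2}$ scalings against the a priori bounds from Part (i).
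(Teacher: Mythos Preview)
Your plan is correct and coincides with the paper's proof in both structure and substance: the same a~priori bounds via Theorem~\ref{t:4}, the same Kirchhoff--Love identification of the limit and of the transverse strains $e^1_{i\|3}$, the same recovery test field $w_\alpha(\varepsilon;\bm\eta)=\eta_\alpha-\varepsilon x_3(\partial_\alpha\eta_3+b^\sigma_\alpha\eta_\sigma)$, $w_3(\varepsilon;\bm\eta)=\eta_3$ inserted in the penalized equations (what you call ``Miara--Sanchez type'' is precisely the field the paper uses in its part~(iv)), the same splitting of the penalty contribution using $(\bm\theta+\eta_i\bm a^i)\cdot\bm q\ge0$ and monotonicity, and the same energy/$\Lambda(\varepsilon)$ argument for strong convergence. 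One point to tighten in Part~(i): testing with $\bm v=\bm u_\kappa(\varepsilon)$ does \emph{not} produce a term of the form $-\{w\}^{-}w$ directly, since the test function contributes only $u_{i,\kappa}(\varepsilon)\bm g^i(\varepsilon)\cdot\bm q$ rather than the full $[\bm\theta+\varepsilon x_3\bm a_3+u_{i,\kappa}(\varepsilon)\bm g^i(\varepsilon)]\cdot\bm q$; the paper adds and subtracts $(\bm\theta+\varepsilon x_3\bm a_3)\cdot\bm q\ge0$ (using the standing hypothesis $\bm\Theta(\overline{\Omega^\varepsilon})\subset\mathbb H$, cf.\ Lemma~\ref{lem:1}) to recover the favourable sign, and you should do the same.
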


\begin{proof}

Strong and weak convergences as $\varepsilon \to 0$ are respectively denoted by $\to$ and $\rightharpoonup$. For brevity, we let
\[
e_{i\| j} (\varepsilon) := e_{i\|j} (\varepsilon ; \bm{u}_\kappa(\varepsilon)).
\]

The \emph{outline} of the proof, which is broken into six parts numbered (i)--(vi), is to a large extent inspired by the proof of Theorem~6.2-1 of~\cite{Ciarlet2000} (itself adapted from Theorem 5.1 in Ciarlet, Lods and Miara~\cite{CiaLodsMia1996}), where no confinement condition was imposed. This is why some parts of the proof are reminiscent of those in~\cite{Ciarlet2000}; otherwise, considering the confinement condition requires extra care.

(i) \emph{There exists a subsequence, still denoted $(\bm{u}_\kappa(\varepsilon))_{\varepsilon > 0}$, and there exists $\bm{u} \in \bm{H}^1(\Omega)$ and there exist $e_{i\|j}^1 \in L^2(\Omega)$ satisfying}
$$
{\bm{u}=\bf0} \textup{ on }  \Gamma_0=\gamma_0 \times \left[-1, 1\right]
$$
\emph{and such that}
\begin{gather*}
\bm{u}_\kappa(\varepsilon) \rightharpoonup \bm{u} \textup{ in }\bm{H}^1(\Omega) \textup{ and thus } \bm{u}_\kappa(\varepsilon) \to \bm{u} \textup{ in }\bm{L}^2(\Omega), \\
(\bm{\theta}(y)+u_i(y,x_3) \bm{a}^i(y)) \cdot \bm{q} \ge 0 \textup{ for a.a. }x=(y,x_3) \in \Omega,\\
\dfrac{1}{\varepsilon}e_{i\|j} (\varepsilon) \rightharpoonup e_{i\|j}^1 \textup{ in } L^2(\Omega).
\end{gather*}

Letting $\bm{v} = \bm{u}_\kappa(\varepsilon)$ in the variational equations of Problem~\ref{problem1scaled}. Combining the uniform positive-definiteness of the tensor $(A^{ijk\ell} (\varepsilon))$, the Korn inequality  of Theorem~\ref{t:4}, the asymptotic behavior of the function $g(\varepsilon)$ (Lemma~\ref{lem:2}), and the fact that (see Lemmas~\ref{lem:1} and~\ref{lem:3})
\begin{align*}
&-\dfrac{\varepsilon}{\kappa}\int_{\Omega}\dfrac{\left\{[\bm{\theta} + \varepsilon x_3 \bm{a}_3+u_{j,\kappa}(\varepsilon)\bm{g}^j(\varepsilon)]\cdot \bm{q}\right\}^{-}}{\sqrt{\sum_{\ell=1}^{3}|\bm{g}^\ell(\varepsilon) \cdot \bm{q}|^2}}(u_{i,\kappa}(\varepsilon)\bm{g}^i(\varepsilon)\cdot \bm{q}) \sqrt{g(\varepsilon)} \dd x\\
&=-\dfrac{\varepsilon}{\kappa}\dfrac{\int_{\Omega}\left\{[\bm{\theta} + \varepsilon x_3 \bm{a}_3+u_{j,\kappa}(\varepsilon)\bm{g}^j(\varepsilon)]\cdot \bm{q}\right\}^{-}}{\sqrt{\sum_{\ell=1}^{3}|\bm{g}^\ell(\varepsilon) \cdot \bm{q}|^2}}[(\bm{\theta} + \varepsilon x_3 \bm{a}_3+u_{i,\kappa}(\varepsilon)\bm{g}^i(\varepsilon))\cdot \bm{q}] \sqrt{g(\varepsilon)} \dd x\\
&\quad+\dfrac{\varepsilon}{\kappa}\int_\Omega\dfrac{\left\{[\bm{\theta} + \varepsilon x_3 \bm{a}_3+u_{j,\kappa}(\varepsilon)\bm{g}^j(\varepsilon)]\cdot \bm{q}\right\}^{-}}{\sqrt{\sum_{\ell=1}^{3}|\bm{g}^\ell(\varepsilon) \cdot \bm{q}|^2}} [(\bm{\theta} + \varepsilon x_3 \bm{a}_3)\cdot \bm{q}] \sqrt{g(\varepsilon)} \dd x\ge0, \quad \textup{ for all } \kappa>0\textup{ and all }\varepsilon>0,
\end{align*}
we obtain for  $\varepsilon > 0$ sufficiently small:
\begin{align*}
  C^{-2}_1 \varepsilon^2 \sum_i  \left\| u_{i,\kappa} (\varepsilon) \right\|^2_{H^1(\Omega)} &\leq \sum_{i, j} \left\| e_{i\|j} (\varepsilon) \right\|^2_{L^2(\Omega)} \leq\dfrac{C_0}{\sqrt{g_0}} \int_\Omega A^{ijk\ell} (\varepsilon) e_{k\|\ell} (\varepsilon ) e_{i\|j} (\varepsilon) \sqrt{g(\varepsilon)} \dd x \\
  &\quad-\dfrac{C_0 \varepsilon}{\kappa\sqrt{g_0}}\int_{\Omega}\dfrac{\left\{[\bm{\theta} + \varepsilon x_3 \bm{a}_3+u_{j,\kappa}(\varepsilon)\bm{g}^j(\varepsilon)]\cdot \bm{q}\right\}^{-}}{\sqrt{\sum_{\ell=1}^{3}|\bm{g}^\ell(\varepsilon) \cdot \bm{q}|^2}}(u_{i,\kappa}(\varepsilon)\bm{g}^i(\varepsilon)\cdot \bm{q}) \sqrt{g(\varepsilon)} \dd x\\
  & = \varepsilon^2 \frac{C_0}{\sqrt{g_0}} \int_\Omega f^i u_{i,\kappa}(\varepsilon) \sqrt{g(\varepsilon)} \dd x \\
  &\le \varepsilon^2 C_0 \sqrt{\frac{g_1}{g_0}} \Big\{ \sum_i \| f^i\|^2_{L^2(\Omega)} \Big\}^{1/2} \Big\{ \sum_i \left\| u_{i,\kappa}(\varepsilon) \right\|^2_{L^2(\Omega)} \Big\}^{1/2}.
\end{align*}

This chain of inequalities first shows that the norms $\left(\left\| u_{i,\kappa}(\varepsilon)\right\|_{H^1(\Omega)}\right)_{\varepsilon>0}$ are bounded independently of $\varepsilon$, secondly, that the terms $\left(\varepsilon^{-1}\left\| e_{i\|j} (\varepsilon) \right\|_{L^2(\Omega)}\right)_{\varepsilon>0}$ are bounded independently of $\varepsilon$ and, finally, that the terms
\begin{equation}
\label{penalty}
\left(\dfrac{1}{\sqrt{\varepsilon}}\left\|\dfrac{\left\{[\bm{\theta} + \varepsilon x_3 \bm{a}_3+u_{i,\kappa}(\varepsilon)\bm{g}^i(\varepsilon)]\cdot \bm{q}\right\}^{-}}{\kappa \sqrt[4]{\sum_{\ell=1}^{3}|\bm{g}^\ell(\varepsilon) \cdot \bm{q}|^2}}\right\|_{L^2(\Omega)}^2\right)_{\varepsilon>0}
\end{equation}
are bounded independently of $\varepsilon$ as well (recall that, by assumption~\eqref{kappa}, we have that $0<\kappa=\sqrt{\varepsilon}$). 

Hence there exist a subsequence, a vector field $\bm{u} \in \bm{H}^1(\Omega)$, and functions $e_{i\|j}^1 \in L^2(\Omega)$ such that, if we let $\varepsilon \to 0$, the following convergence process occurs:
\begin{equation}
\label{process1}
\begin{aligned}
\bm{u}_\kappa(\varepsilon) &\rightharpoonup \bm{u} \textup{ in } \bm{H}^1(\Omega),\\
\dfrac{1}{\varepsilon}e_{i\|j} (\varepsilon) &\rightharpoonup e_{i\|j}^1 \textup{ in } L^2(\Omega),\\
\dfrac{1}{\kappa}\dfrac{\left\{[\bm{\theta} + \varepsilon x_3 \bm{a}_3+u_{i,\kappa}(\varepsilon)\bm{g}^i(\varepsilon)]\cdot \bm{q}\right\}^{-}}{\sqrt[4]{\sum_{\ell=1}^{3}|\bm{g}^\ell(\varepsilon) \cdot \bm{q}|^2}} &\to 0 \textup{ in }L^2(\Omega),\\
\left\{[\bm{\theta} + \varepsilon x_3 \bm{a}_3+u_{i,\kappa}(\varepsilon)\bm{g}^i(\varepsilon)]\cdot \bm{q}\right\}^{-} &\to 0 \textup{ in }L^2(\Omega).
\end{aligned}
\end{equation}

The fact that $\bm{u}_\kappa(\varepsilon) \to \bm{u}   \textup{ in } \bm{L}^2(\Omega)$ is a consequence of the Rellich-Kondra\v{s}ov Theorem (viz., e.g., Theorem~6.6-3 of~\cite{PGCLNFAA}). Note that, by the definition of limit, the latter means that for any given $\delta>0$ there exists a number $\varepsilon_2=\varepsilon_2(\delta)>0$ such that for all $0<\varepsilon<\varepsilon_2$ it results:
\begin{equation}
\label{L2def}
\|\bm{u}_\kappa(\varepsilon) - \bm{u}\|_{\bm{L}^2(\Omega)} <\delta.
\end{equation}

Note that the third convergence in~\eqref{process1} means that for any given $\delta>0$ there exists a number $\check{\varepsilon}_2=\check{\varepsilon}_2(\delta)>0$ such that for all $0<\varepsilon<\check{\varepsilon}_2$ it results:
\begin{equation}
\label{penalty-L2-0}
\left\|\dfrac{\left\{[\bm{\theta} + \varepsilon x_3 \bm{a}_3+u_{i,\kappa}(\varepsilon)\bm{g}^i(\varepsilon)]\cdot \bm{q}\right\}^{-}}{\kappa \sqrt[4]{\sum_{\ell=1}^{3}|\bm{g}^\ell(\varepsilon) \cdot \bm{q}|^2}}\right\|_{L^2(\Omega)}<\delta.
\end{equation}

Moreover, recall that the term~\eqref{penalty} is bounded independently of $\varepsilon$, in the sense that there exists a constant $C=C(C_0,C_1,g_0,g_1,\omega,\gamma_0,\bm{\theta},\bm{f})>0$ such that:
\begin{equation}
\label{penalty-L2-02}
\left\|\dfrac{\left\{[\bm{\theta} + \varepsilon x_3 \bm{a}_3+u_{i,\kappa}(\varepsilon)\bm{g}^i(\varepsilon)]\cdot \bm{q}\right\}^{-}}{\kappa \sqrt[4]{\sum_{\ell=1}^{3}|\bm{g}^\ell(\varepsilon) \cdot \bm{q}|^2}}\right\|_{L^2(\Omega)}
\le C \varepsilon^{1/4}.
\end{equation}

Therefore, it suffices to take $\check{\varepsilon}_2=\delta^4/C$.

Note that the fourth convergence in~\eqref{process1} means that for any given $\delta>0$ there exists a number $\hat{\varepsilon}_2=\hat{\varepsilon}_2(\delta)>0$ such that for all $0<\varepsilon<\hat{\varepsilon}_2$ it results:
\begin{equation}
\label{penalty-L2}
\left\|\left\{[\bm{\theta} + \varepsilon x_3 \bm{a}_3+u_{i,\kappa}(\varepsilon)\bm{g}^i(\varepsilon)]\cdot \bm{q}\right\}^{-}\right\|_{L^2(\Omega)}<\delta.
\end{equation}

Moreover, recall that the term~\eqref{penalty} is bounded independently of $\varepsilon$, in the sense that there exists a constant $C=C(C_0,C_1,g_0,g_1,\omega,\gamma_0,\bm{\theta},\bm{f})>0$ such that:
\begin{equation}
\label{penalty-L2-2}
\left\|\left\{[\bm{\theta} + \varepsilon x_3 \bm{a}_3+u_{i,\kappa}(\varepsilon)\bm{g}^i(\varepsilon)]\cdot \bm{q}\right\}^{-}\right\|_{L^2(\Omega)}
\le C \kappa\varepsilon^{1/4}=C \varepsilon^{3/4},
\end{equation}
so that $\hat{\varepsilon}_2=\delta^{4/3}/C$ is a suitable threshold number for verifying the definition of limit.

Combining the convergence~\eqref{L2def} with the fourth convergence in~\eqref{process1}, the continuity of the negative part operator (Lemma~\ref{lem:3}), and the fact that, for a generic function,
$$
f^{-}=\dfrac{|f|-f}{2},
$$
gives:
$$
(\bm{\theta}(y)+u_i(y,x_3) \bm{a}^i(y))\cdot\bm{q} \ge 0,\quad\textup{ for a.a. }x=(y,x_3) \in \Omega.
$$

That $\bm{u}=\bf0$ on $\gamma_0 \times \left[-1,1\right]$ follows from the continuity of the trace operator $\mathop{\mathrm{tr}}: H^1(\Omega) \to L^2(\gamma \times \left[-1, 1\right])$. Indeed, for all $i$, we have that for all $v \in H^1(\Omega)$ such that $v=0$ on $\Gamma \setminus \Gamma_0$,
$$
0=\int_{\Gamma_0} u_{i,\kappa}(\varepsilon) v \dd\Gamma \to \int_{\Gamma_0} u_i v \dd\Gamma,
$$
so that a density result proved by Bernard~\cite{Ber2011} (see also Theorem~6.7-3 of~\cite{PGCLNFAA}) gives $u_i=0$ on $\Gamma_0$.

(ii) \emph{The weak limits $u_i \in H^1(\Omega)$ found in} (i) \emph{are independent of the variable $x_3 \in \left[ -1,1\right]$, in the sense that they satisfy
$$
\partial_3 u_i = 0 \textup{ in } L^2(\Omega).
$$
Besides, the average $\overline{\bm{u}}$ satisfies $\overline{\bm{u}} \in \bm{U}_F(\omega)$, namely,
\begin{align*}
\overline{\bm{u}}=(\overline{u}_i) \in H^1(\omega) \times H^1(\omega) \times &H^2(\omega) \textup{ and } \overline{u}_i=\partial_{\nu}\overline{u}_3=0 \textup{ on }\gamma_0,\\
\gamma_{\alpha \beta}(\overline{\bm{u}})&=0 \textup{ in }\omega,\\
\big(\bm{\theta} (y) + \overline{u}_i (y) \bm{a}^i(y)\big) &\cdot \bm{q} \ge 0 \textup{ for a.a. } y \in \omega.
\end{align*}
}

Apart from the latter property, the proof is identical to that of part (ii) of the proof of Theorem 6.2-1 in \cite{Ciarlet2000}. Let us thus prove that
$$
\big(\bm{\theta} (y) + \overline{u}_i (y) \bm{a}^i(y)\big) \cdot \bm{q} \ge 0 \textup{ for a.a. } y \in \omega.
$$

By part~(i), we have that
$$
(\bm{\theta}(y)+u_i(y,x_3)\bm{a}^i(y))\cdot \bm{q} \ge 0 \quad\textup{ for a.a. }x=(y,x_3) \in\Omega.
$$

Since $\bm{u}=(u_i)$ is independent of $x_3$, we have that an application of Theorem~4.2-1 (a) of~\cite{Ciarlet2000} and part~(i) gives
\begin{equation*}
\begin{aligned}
(\bm{\theta}(y)+\overline{u}_i(y)\bm{a}^i(y))\cdot \bm{q} &= \left(\bm{\theta}(y)+\dfrac{1}{2} \int_{-1}^{1} u_i(y,x_3) \dd x_3 \bm{a}^i(y)\right)\cdot \bm{q}\\
&=\dfrac{1}{2}\int_{-1}^{1}\left(\left(\bm{\theta}(y)+u_i(y,x_3)\bm{a}^i(y)\right)\cdot \bm{q}\right) \dd x_3\ge 0
,\quad\textup{ for a.a. }y \in\omega,
\end{aligned}
\end{equation*}
so that $\overline{\bm{u}} = (\overline{u}_i) \in \bm{U}_F(\omega)$.

(iii) \emph{The weak limits $e_{i\|j}^1 \in L^2(\Omega)$ and $\bm{u} \in \bm{H}^1(\Omega)$ found in} (i) \emph{satisfy}
\begin{align*}
&-\partial_3 e_{\alpha\|\beta}^1=\rho_{\alpha\beta}(\bm{u}) \textup{ in }L^2(\Omega),\\
&e_{\alpha \| 3}^1 = 0 \quad \textup{ and } \quad e_{3\|3}^1 = - \frac{\lambda}{\lambda + 2 \mu} a^{\alpha \beta} e_{\alpha \| \beta}^1 \textup{ in } \Omega.
\end{align*}
The equality $-\partial_3 e_{\alpha\|\beta}^1=\rho_{\alpha\beta}(\bm{u})$ in $ L^2(\Omega)$ follows from Theorem~5.2-2 of~\cite{Ciarlet2000}.
Let $\bm{v}=(v_i) \in \bm{V}(\Omega)$ be arbitrarily chosen. It is known (cf., e.g., part~(iii) of Theorem~6.2-1 of~\cite{Ciarlet2000}) that
\begin{align*}
\varepsilon e_{\alpha\|\beta}(\varepsilon;\bm{v}) &\to 0 \textup{ in } L^2(\Omega),\\
\varepsilon e_{\alpha\|3}(\varepsilon;\bm{v}) &\to \dfrac{1}{2} \partial_3 v_\alpha \textup{ in } L^2(\Omega),\\
\varepsilon e_{3\|3}(\varepsilon;\bm{v}) &=\partial_3 v_3 \textup{ for all }\varepsilon>0.
\end{align*}

These relations, combined with the boundedness of the terms $\left(\varepsilon^{-1}\left\| e_{i\|j} (\varepsilon) \right\|_{L^2(\Omega)}\right)_{\varepsilon>0}$ independently of $\varepsilon > 0$ (part (i)), the asymptotic behavior of the functions $A^{ijk\ell} (\varepsilon)$, the third convergence in~\eqref{process1}, and $\sqrt{g(\varepsilon)}$ as $\varepsilon \to 0$ (Lemma~\ref{lem:2}), give
\begin{align*}
&\int_\Omega  \left( A^{\alpha \beta \sigma \tau}(\varepsilon) \dfrac{e_{\sigma\|\tau}(\varepsilon)}{\varepsilon} 
+ A^{\alpha \beta 33}(\varepsilon) \dfrac{e_{3\|3}(\varepsilon)}{\varepsilon}\right) \varepsilon e_{\alpha\|\beta}(\varepsilon;\bm{v}) \sqrt{g(\varepsilon)} \dd x\to 0 \textup{ as } \varepsilon  \to   0, \\
& \int_\Omega \left(4 A^{\alpha 3 \sigma 3}(\varepsilon)\dfrac{e_{\sigma\|3}(\varepsilon)}{\varepsilon}\right) \varepsilon e_{\alpha\|3}(\varepsilon;\bm{v}) \sqrt{g(\varepsilon)} \dd x \to \int_\Omega 2 \mu a^{\alpha \sigma} e_{\sigma\|3}^1 \partial_3 v_\alpha \sqrt{a} \dd x \textup{ as } \varepsilon \to  0, \\
& \int_\Omega \left(A^{33\sigma \tau}(\varepsilon) \dfrac{e_{\sigma \| \tau}(\varepsilon)}{\varepsilon} + A^{3333}(\varepsilon)\dfrac{e_{3\|3}(\varepsilon)}{\varepsilon}\right) \varepsilon e_{3\|3}(\varepsilon;\bm{v}) \sqrt{g(\varepsilon)} \dd x\\
&\quad\to \int_\Omega \left( \lambda a^{\sigma \tau} e_{\sigma \| \tau}^1 + (\lambda + 2 \mu) e_{3\| 3}^1 \right) \partial_3 v_3 \sqrt{a} \dd x \textup{ as } \varepsilon  \to  0, \\
&-\dfrac{\varepsilon}{\kappa}\int_{\Omega} \dfrac{\left\{[\bm{\theta} + \varepsilon x_3 \bm{a}_3+u_{i,\kappa}(\varepsilon) \bm{g}^i(\varepsilon)]\cdot \bm{q}\right\}^{-}}{\sqrt{\sum_{\ell=1}^{3}|\bm{g}^\ell(\varepsilon) \cdot \bm{q}|^2}} (v_i \bm{g}^i(\varepsilon)\cdot\bm{q}) \sqrt{g(\varepsilon)} \dd x \to 0 \textup{ as } \varepsilon \to 0 \,(\textup{cf. } \eqref{kappa}\textup{--}\eqref{process1}),\\
&\varepsilon^2 \int_\Omega  f^i v_i\sqrt{g(\varepsilon)} \dd x \to 0 \textup{ as } \varepsilon \to 0.
\end{align*}

Consequently,
$$
\int_\Omega \left((2\mu a^{\alpha \sigma} e_{\sigma \| 3}^1) \partial_3 v_\alpha + \big( \lambda a^{\sigma \tau} e_{\sigma \| \tau}^1 + (\lambda + 2 \mu ) e_{3\| 3}^1 \big) \partial_3 v_3\right) \sqrt{a} \dd x = 0.
$$
Since this inequality holds for any vector field $\bm{v}=(v_i) \in \bm{V}(\Omega)$, it follows by Theorem~3.4-1 of~\cite{Ciarlet2000} that
$$
e_{\sigma \| 3}^1 = 0 \quad \textup{ and } \quad \lambda a^{\sigma \tau} e_{\sigma \| \tau}^1 + (\lambda + 2 \mu) e_{3\| 3}^1 = 0 \textup{ in } L^2(\Omega).
$$
In particular, the latter gives:
$$
e_{3\|3}^1 = - \frac{\lambda}{\lambda + 2 \mu} a^{\alpha \beta} e_{\alpha \| \beta}^1 \textup{ in } L^2(\Omega).
$$

(iv) \emph{The weak limit $\overline{\bm{u}}=(\overline{u}_i)$ is the unique solution of Problem~\ref{problemLim}.}
	
Given any $\bm{\eta} \in \bm{U}_F(\omega)$, define the vector field $\bm{w}(\varepsilon;\bm{\eta})=(w_i(\varepsilon;\bm{\eta}))$ by:
\begin{equation}
\label{w}
\begin{aligned}
w_\alpha(\varepsilon;\bm{\eta})&:=\eta_\alpha-\varepsilon x_3 (\partial_\alpha \eta_3 +b_\alpha^\sigma \eta_\sigma),\\
w_3(\varepsilon;\bm{\eta})&:=\eta_3.
\end{aligned}
\end{equation}
	
We have that $\bm{w}(\varepsilon;\bm{\eta}) \in \bm{V}(\Omega)$ and $e_{3\|3}(\varepsilon;\bm{w}(\varepsilon;\bm{\eta}))=0$ for all $\varepsilon>0$.
Following the same steps as in part (iv) of~\cite{Ciarlet2000}, observe that:
\begin{equation}
\label{weta}
\begin{aligned}
\bm{w}(\varepsilon;\bm{\eta}) \to& \bm{\eta} \textup{ in }\bm{H}^1(\Omega),\\
\dfrac{1}{\varepsilon} e_{\alpha\| \beta}(\varepsilon;\bm{w}(\varepsilon;\bm{\eta})) \to& \{-x_3 \rho_{\alpha \beta}(\bm{\eta})\} \textup{ in } L^2(\Omega),\\
\textup{The sequence }\left(\dfrac{1}{\varepsilon} e_{\alpha\| 3}(\varepsilon;\bm{w}(\varepsilon;\bm{\eta}))\right)_{\varepsilon>0} &\textup{ converges in }L^2(\Omega).
\end{aligned}
\end{equation}

	Fix $\varepsilon>0$ and define the number
	$$
	\Lambda(\varepsilon):=\int_\Omega A^{ijk \ell}(\varepsilon)\left\{\dfrac{1}{\varepsilon}e_{k\| \ell}(\varepsilon) -e_{k\|\ell}^1\right\} \left\{\dfrac{1}{\varepsilon}e_{i\| j}(\varepsilon) -e_{i\|j}^1\right\} \sqrt{g(\varepsilon)} \dd x,
	$$
	so that, combining the uniform positive-definiteness of the three-dimensional elasticity tensor $A^{ijk\ell}(\varepsilon)$ and the asymptotic behavior of the function $g(\varepsilon)$ (Lemma~\ref{lem:2}), we obtain:
	$$
	0\le \sum_{i, j} \left\|\dfrac{1}{\varepsilon}e_{i\|j}(\varepsilon)-e_{i\|j}^1\right\|_{L^2(\Omega)}^2\le\dfrac{C_0}{\sqrt{g_0}} \Lambda(\varepsilon).
	$$
	
	An application of the latter to the variational equations of Problem~\ref{problem1scaled} under the specialization \mbox{$\bm{v}=(\bm{w}(\varepsilon;\bm{\eta})-\bm{u}_\kappa(\varepsilon))$}, with $\bm{\eta} \in \bm{U}_F(\omega)$ gives:
	\begin{equation}
	\label{eqLe}
	\begin{aligned}
	0 \le \Lambda(\varepsilon)&=\int_\Omega A^{ijk\ell}(\varepsilon) \left\{\dfrac{1}{\varepsilon}e_{k\|\ell}(\varepsilon)\right\} \left\{\dfrac{1}{\varepsilon}e_{i\|j}(\varepsilon)\right\} \sqrt{g(\varepsilon)} \dd x\\
	&\quad -2 \int_\Omega A^{ijk\ell}(\varepsilon) \left\{\dfrac{1}{\varepsilon} e_{k\|\ell}(\varepsilon)\right\} e_{i\|j}^1 \sqrt{g(\varepsilon)} \dd x\\
	&\quad +\int_\Omega A^{ijk\ell}(\varepsilon) e_{k\|\ell}^1 e_{i\|j}^1 \sqrt{g(\varepsilon)} \dd x\\
	&= \int_\Omega A^{ijk\ell}(\varepsilon)\left\{\dfrac{1}{\varepsilon}e_{k\|\ell}(\varepsilon)\right\} \left\{\dfrac{1}{\varepsilon}e_{i\|j}(\varepsilon;\bm{w}(\varepsilon;\bm{\eta}))\right\} \sqrt{g(\varepsilon)} \dd x\\
	&\quad-\dfrac{1}{\varepsilon\kappa} \int_\Omega \dfrac{\left\{[\bm{\theta} + \varepsilon x_3 \bm{a}_3+u_{j,\kappa}(\varepsilon)\bm{g}^j(\varepsilon)]\cdot \bm{q}\right\}^{-}}{\sqrt{\sum_{\ell=1}^{3}|\bm{g}^\ell(\varepsilon) \cdot \bm{q}|^2}} \left((w_i(\varepsilon;\bm{\eta})-u_{i,\kappa}(\varepsilon))\bm{g}^i(\varepsilon)\cdot\bm{q}\right) \sqrt{g(\varepsilon)} \dd x\\
	&\quad -\int_\Omega f^i (w_i(\varepsilon;\bm{\eta})-u_{i,\kappa}(\varepsilon)) \sqrt{g(\varepsilon)} \dd x\\
	&\quad -2 \int_\Omega A^{ijk\ell}(\varepsilon) \left\{\dfrac{1}{\varepsilon} e_{k\|\ell}(\varepsilon)\right\} e_{i\|j}^1 \sqrt{g(\varepsilon)} \dd x+\int_\Omega A^{ijk\ell}(\varepsilon) e_{k\|\ell}^1 e_{i\|j}^1 \sqrt{g(\varepsilon)} \dd x\\
	&= \int_\Omega A^{\alpha\beta\sigma\tau}(\varepsilon)\left\{\dfrac{1}{\varepsilon}e_{\sigma\|\tau}(\varepsilon)\right\} \left\{\dfrac{1}{\varepsilon}e_{\alpha\|\beta}(\varepsilon;\bm{w}(\varepsilon;\bm{\eta}))\right\} \sqrt{g(\varepsilon)} \dd x\\
	&\quad+\int_\Omega A^{\alpha\beta33}(\varepsilon)\left\{\dfrac{1}{\varepsilon}e_{3\|3}(\varepsilon)\right\} \left\{\dfrac{1}{\varepsilon}e_{\alpha\|\beta}(\varepsilon;\bm{w}(\varepsilon;\bm{\eta}))\right\} \sqrt{g(\varepsilon)} \dd x\\
	&\quad+4\int_\Omega A^{\alpha 3 \sigma 3}(\varepsilon)\left\{\dfrac{1}{\varepsilon}e_{\sigma\|3}(\varepsilon)\right\} \left\{\dfrac{1}{\varepsilon}e_{\alpha\|3}(\varepsilon;\bm{w}(\varepsilon;\bm{\eta}))\right\} \sqrt{g(\varepsilon)} \dd x\\
	&\quad+\int_\Omega A^{33\sigma\tau}(\varepsilon)\left\{\dfrac{1}{\varepsilon}e_{\sigma\|\tau}(\varepsilon)\right\} \left\{\dfrac{1}{\varepsilon}e_{3\|3}(\varepsilon;\bm{w}(\varepsilon;\bm{\eta}))\right\} \sqrt{g(\varepsilon)} \dd x\\
	&\quad+\int_\Omega A^{3333}(\varepsilon)\left\{\dfrac{1}{\varepsilon}e_{3\|3}(\varepsilon)\right\} \left\{\dfrac{1}{\varepsilon}e_{3\|3}(\varepsilon;\bm{w}(\varepsilon;\bm{\eta}))\right\} \sqrt{g(\varepsilon)} \dd x\\
	&\quad-\dfrac{1}{\varepsilon\kappa} \int_\Omega \dfrac{\left\{[\bm{\theta} + \varepsilon x_3 \bm{a}_3+u_{j,\kappa}(\varepsilon)\bm{g}^j(\varepsilon)]\cdot \bm{q}\right\}^{-}}{\sqrt{\sum_{\ell=1}^{3}|\bm{g}^\ell(\varepsilon) \cdot \bm{q}|^2}} \left((w_i(\varepsilon;\bm{\eta})-u_{i,\kappa}(\varepsilon))\bm{g}^i(\varepsilon)\cdot\bm{q}\right) \sqrt{g(\varepsilon)} \dd x\\
	&\quad -\int_\Omega f^i (w_i(\varepsilon;\bm{\eta})-u_{i,\kappa}(\varepsilon)) \sqrt{g(\varepsilon)} \dd x\\
	&\quad -2 \int_\Omega A^{ijk\ell}(\varepsilon) \left\{\dfrac{1}{\varepsilon} e_{k\|\ell}(\varepsilon)\right\} e_{i\|j}^1 \sqrt{g(\varepsilon)} \dd x+\int_\Omega A^{ijk\ell}(\varepsilon) e_{k\|\ell}^1 e_{i\|j}^1 \sqrt{g(\varepsilon)} \dd x.
	\end{aligned}
	\end{equation}

By virtue of the definition of the vector field $\bm{w}(\varepsilon;\bm{\eta})=(w_i(\varepsilon;\bm{\eta}))$ introduced in~\eqref{w} and the asymptotic behaviour of the contravariant basis vectors $\bm{g}^i(\varepsilon)$ established in Lemma~\ref{lem:2}, we obtain that the term
\begin{equation}
	\label{eqLe-2}
	\begin{aligned}
		&-\dfrac{1}{\varepsilon}\int_{\Omega} \dfrac{\left\{[\bm{\theta} + \varepsilon x_3 \bm{a}_3+u_{j,\kappa}(\varepsilon)\bm{g}^j(\varepsilon)]\cdot \bm{q}\right\}^{-}}{\kappa \sqrt{\sum_{\ell=1}^{3}|\bm{g}^\ell(\varepsilon) \cdot \bm{q}|^2}} [(w_i(\varepsilon;\bm{\eta})-u_{i,\kappa}(\varepsilon))\bm{g}^i(\varepsilon) \cdot \bm{q}] \sqrt{g(\varepsilon)} \dd x\\
		&=-\dfrac{1}{\varepsilon}\int_{\Omega} \dfrac{\left\{[\bm{\theta} + \varepsilon x_3 \bm{a}_3+u_{j,\kappa}(\varepsilon)\bm{g}^j(\varepsilon)]\cdot \bm{q}\right\}^{-}}{\kappa\sqrt{\sum_{\ell=1}^{3}|\bm{g}^\ell(\varepsilon) \cdot \bm{q}|^2}}[(\bm{\theta} + \varepsilon x_3 \bm{a}_3+w_i(\varepsilon;\bm{\eta})\bm{g}^i(\varepsilon))\cdot \bm{q}] \sqrt{g(\varepsilon)} \dd x\\
		&\quad +\dfrac{1}{\varepsilon}\int_{\Omega} \dfrac{\left\{[\bm{\theta} + \varepsilon x_3 \bm{a}_3+u_{j,\kappa}(\varepsilon)\bm{g}^j(\varepsilon)]\cdot \bm{q}\right\}^{-}}{\kappa\sqrt{\sum_{\ell=1}^{3}|\bm{g}^\ell(\varepsilon) \cdot \bm{q}|^2}}\left([\bm{\theta} + \varepsilon x_3 \bm{a}_3+u_{i,\kappa}(\varepsilon)\bm{g}^i(\varepsilon)]\cdot \bm{q}\right) \sqrt{g(\varepsilon)} \dd x\\
		&=-\dfrac{1}{\varepsilon}\int_{\Omega} \dfrac{\left\{[\bm{\theta} + \varepsilon x_3 \bm{a}_3+u_{j,\kappa}(\varepsilon)\bm{g}^j(\varepsilon)]\cdot \bm{q}\right\}^{-}}{\kappa\sqrt{\sum_{\ell=1}^{3}|\bm{g}^\ell(\varepsilon) \cdot \bm{q}|^2}}[(\bm{\theta} + \eta_i \bm{a}^i) \cdot \bm{q}] \sqrt{g(\varepsilon)} \dd x\\
		&\quad-\dfrac{1}{\varepsilon}\int_{\Omega} \dfrac{\left\{[\bm{\theta} + \varepsilon x_3 \bm{a}_3+u_{j,\kappa}(\varepsilon)\bm{g}^j(\varepsilon)]\cdot \bm{q}\right\}^{-}}{\kappa\sqrt{\sum_{\ell=1}^{3}|\bm{g}^\ell(\varepsilon) \cdot \bm{q}|^2}}[\varepsilon x_3 (\eta_\alpha b_\sigma^\alpha \bm{a}^\sigma) \cdot \bm{q}] \sqrt{g(\varepsilon)} \dd x\\
		&\quad-\dfrac{1}{\varepsilon}\int_{\Omega} \dfrac{\left\{[\bm{\theta} + \varepsilon x_3 \bm{a}_3+u_{j,\kappa}(\varepsilon)\bm{g}^j(\varepsilon)]\cdot \bm{q}\right\}^{-}}{\kappa\sqrt{\sum_{\ell=1}^{3}|\bm{g}^\ell(\varepsilon) \cdot \bm{q}|^2}}[\varepsilon x_3(\bm{a}_3-(\partial_\alpha \eta_3 +b_\alpha^\sigma \eta_\sigma)\bm{a}^\alpha) \cdot \bm{q}] \sqrt{g(\varepsilon)} \dd x\\
		&\quad +\dfrac{1}{\varepsilon}\int_{\Omega} \dfrac{\left\{[\bm{\theta} + \varepsilon x_3 \bm{a}_3+u_{j,\kappa}(\varepsilon)\bm{g}^j(\varepsilon)]\cdot \bm{q}\right\}^{-}}{\kappa\sqrt{\sum_{\ell=1}^{3}|\bm{g}^\ell(\varepsilon) \cdot \bm{q}|^2}}\left([\bm{\theta} + \varepsilon x_3 \bm{a}_3+u_{i,\kappa}(\varepsilon)\bm{g}^i(\varepsilon)]\cdot \bm{q}\right) \sqrt{g(\varepsilon)} \dd x + O(\varepsilon)
	\end{aligned}
\end{equation}
has $\limsup$ less or equal than zero when $\varepsilon \to 0$ since the first addend in the last equality is less or equal than zero for all $\varepsilon>0$ being $\bm{\eta} \in \bm{U}_F(\omega)$, the second and the third addends tend to zero as $\varepsilon \to 0$ thanks to the third convergence of~\eqref{process1}, and the fourth addend is less or equal than zero thansk to the monotonicity of $-\{\cdot\}^{-}$ established in Lemma~\ref{lem:3}.
Note that the second factor in the second integral of the last equality corresponds to a remnant of the definition of $\bm{g}^\alpha$ (cf. Lemma~\ref{lem:2}).

The asymptotic behavior of the functions $\bm{w}(\varepsilon;\bm{\eta})$ and $\dfrac{1}{\varepsilon} e_{i\| j}(\varepsilon;\bm{w}(\varepsilon;\bm{\eta}))$ exhibited in~\eqref{weta}, the asymptotic behavior of the three-dimensional elasticity tensor $A^{ijk\ell}(\varepsilon)$ and $g(\varepsilon)$ (Lemma~\ref{lem:2}), the weak convergences $\dfrac{1}{\varepsilon}e_{i\| j}(\varepsilon) \rightharpoonup e_{i\|j}^1$ in $L^2(\Omega)$ established in part~(i), and the relations satisfied by $e_{i\| 3}^1$ (part~(iii)) together give:
\begin{equation}
\label{eqLe-1}
\begin{aligned}
\int_\Omega A^{ijk\ell}(\varepsilon)&\left\{\dfrac{1}{\varepsilon}e_{k\|\ell}(\varepsilon)\right\} \left\{\dfrac{1}{\varepsilon}e_{i\|j}(\varepsilon;\bm{w}(\varepsilon;\bm{\eta}))\right\} \sqrt{g(\varepsilon)} \dd x\\
&\to \int_\Omega (A^{\alpha\beta\sigma\tau}(0)e_{\sigma\|\tau}^1+A^{\alpha\beta33}(0)e_{3\|3}^1) \{-x_3 \rho_{\alpha\beta}(\bm{\eta})\} \sqrt{a} \dd x,\\
2 \int_\Omega A^{ijk\ell}(\varepsilon) &\left\{\dfrac{1}{\varepsilon} e_{k\|\ell}(\varepsilon)\right\} e_{i\|j}^1 \sqrt{g(\varepsilon)} \dd x
-\int_\Omega A^{ijk\ell}(\varepsilon) e_{k\|\ell}^1 e_{i\|j}^1 \sqrt{g(\varepsilon)} \dd x\\
&\to \int_\Omega A^{ijk\ell}(0) e_{k\|\ell}^1 e_{i\|j}^1 \sqrt{a} \dd x=\int_\Omega a^{\alpha\beta\sigma\tau} e_{\sigma\|\tau}^1 e_{\alpha\|\beta}^1 \sqrt{a} \dd x,\\
\int_\Omega f^i (w_i(\varepsilon;\bm{\eta}&)-u_{i,\kappa}(\varepsilon)) \sqrt{g(\varepsilon)} \dd x \to \int_\Omega f^i(\eta_i -u_i) \sqrt{a} \dd x=\int_\omega p^i (\eta_i -\overline{u}_i) \sqrt{a} \dd y.
\end{aligned}
\end{equation}
	
We have yet to take into account the relations $\rho_{\alpha \beta}(\bm{u})=-\partial_3 e_{\alpha\|\beta}^1$ in $L^2(\Omega)$ established in part~(iii). Since $\bm{u}$ is independent of $x_3$ (part (ii)), these relations show that the functions $e_{\alpha\|\beta}^1$ are of the form
	$$
	e_{\alpha\|\beta}^1=\Upsilon_{\alpha\beta}-x_3 \rho_{\alpha\beta}(\overline{\bm{u}})\quad\textup{ with }\Upsilon_{\alpha\beta} \in L^2(\omega).
	$$
	
The asymptotic behaviors observed in~\eqref{eqLe}--\eqref{eqLe-1}, Lemma~\ref{lem:1}, and the uniform positive-definiteness of the fourth order two-dimensional elasticity tensor $(a^{\alpha\beta\sigma\tau})$ in turn imply:
\begin{align*}
	0&\le \limsup_{\varepsilon \to 0} \Lambda(\varepsilon)
	\le \lim_{\varepsilon \to 0} \int_\Omega A^{ijk\ell}(\varepsilon)\left\{\dfrac{1}{\varepsilon}e_{k\|\ell}(\varepsilon)\right\} \left\{\dfrac{1}{\varepsilon}e_{i\|j}(\varepsilon;\bm{w}(\varepsilon;\bm{\eta}))\right\} \sqrt{g(\varepsilon)} \dd x\\
	&\quad -2 \lim_{\varepsilon \to 0} \int_\Omega A^{ijk\ell}(\varepsilon) \left\{\dfrac{1}{\varepsilon} e_{k\|\ell}(\varepsilon)\right\} e_{i\|j}^1 \sqrt{g(\varepsilon)} \dd x
	+\lim_{\varepsilon \to 0}\int_\Omega A^{ijk\ell}(\varepsilon) e_{k\|\ell}^1 e_{i\|j}^1 \sqrt{g(\varepsilon)} \dd x\\
	&\quad-\lim_{\varepsilon \to 0}\int_\Omega f^i (w_i(\varepsilon;\bm{\eta})-u_{i,\kappa}(\varepsilon)) \sqrt{g(\varepsilon)} \dd x\\
	&= -\int_\omega p^i (\eta_i-\overline{u}_i) \sqrt{a} \dd y -\int_\Omega A^{ijk\ell}(0) e_{k\|\ell}^1 e_{i\|j}^1 \sqrt{a} \dd x 
	+\int_\Omega A^{\alpha\beta k\ell}(0) e_{k\|\ell}^1 \{-x_3 \rho_{\alpha \beta}(\bm{\eta})\} \sqrt{a} \dd x\\
	&=-\int_\omega p^i (\eta_i-\overline{u}_i) \sqrt{a} \dd y -\dfrac{1}{2}\int_{\Omega} a^{\alpha\beta\sigma\tau} \Upsilon_{\sigma\tau}\Upsilon_{\alpha\beta} \sqrt{a} \dd x
	+\dfrac{1}{2}\int_\Omega a^{\alpha\beta\sigma\tau} x_3^2 \rho_{\sigma \tau}(\overline{\bm{u}}) \rho_{\alpha \beta}(\bm{\eta}-\overline{\bm{u}}) \sqrt{a} \dd x\\
	&\le \dfrac{1}{3} \int_\omega a^{\alpha\beta\sigma\tau} \rho_{\sigma \tau}(\overline{\bm{u}}) \rho_{\alpha \beta}(\bm{\eta}-\overline{\bm{u}})\dd y - \int_\omega p^i (\eta_i -\overline{u}_i) \sqrt{a} \dd y.
	\end{align*}

	In conclusion, the latter yields
	$$
	0 \le \dfrac{1}{3} \int_\omega a^{\alpha\beta\sigma\tau} \rho_{\sigma \tau}(\overline{\bm{u}}) \rho_{\alpha \beta}(\bm{\eta}-\overline{\bm{u}})\dd y - \int_\omega p^i (\eta_i -\overline{u}_i) \sqrt{a} \dd y,
	$$
	which establishes that $\overline{\bm{u}}$ is the unique solution for Problem~\ref{problemLim} since $\bm{\eta}=(\eta_i) \in \bm{U}_F(\omega)$ is arbitrarily chosen.
	
	(v) \emph{The weak convergence $\bm{u}_{\kappa}(\varepsilon) \rightharpoonup \bm{u}$ in $\bm{H}^1(\Omega)$ established in part~(i) is in fact strong, i.e.,
	$$
	\bm{u}_{\kappa}(\varepsilon) \to \bm{u} \quad\textup{ in }\bm{H}^1(\Omega),
	$$
	and holds for the whole sequence $(\bm{u}_{\kappa}(\varepsilon))_{\varepsilon>0}$.
	}
	
	The proof is identical to that of part~(vi) of the proof of Theorem~6.2-1 of~\cite{Ciarlet2000} and for this reason is omitted. The assertion means that for all $\delta>0$ there exists a number $\varepsilon_2=\varepsilon_2(\delta)>0$ such that for all $0<\varepsilon<\varepsilon_2$ it results
	\begin{equation*}
	\|\bm{u}_{\kappa}(\varepsilon) - \bm{u}\|_{\bm{H}^1(\Omega)} <\delta.
	\end{equation*}
	
	(vi) \emph{The weak convergences $\dfrac{1}{\varepsilon}e_{i\|j}(\varepsilon) \rightharpoonup e_{i\|j}^1$ in $L^2(\Omega)$ established in part \emph{(i)} are in fact strong, i.e.,
	$$
	\dfrac{1}{\varepsilon}e_{i\|j}(\varepsilon) \to e_{i\|j}^1 \quad\textup{ in }L^2(\Omega).
	$$
	Besides, the limits $e_{i\|j}^1$ are unique; hence these convergences hold for the whole family $\left(\dfrac{1}{\varepsilon}e_{i\|j}(\varepsilon)\right)_{\varepsilon>0}$.}

	Let $\Lambda(\varepsilon)$ be as in part (iv). Since $\overline{\bm{u}} \in \bm{U}_F(\omega)$, we define a vector field $\tilde{\bm{w}}(\varepsilon) = (\tilde{w}_i(\varepsilon)) \in \bm{V}(\Omega)$ as follows:
	\begin{align*}
	\tilde{w}_\alpha(\varepsilon)&:=\overline{u}_\alpha-\varepsilon x_3 (\partial_\alpha \overline{u}_3+b_\alpha^\sigma \overline{u}_\sigma),\\
	\tilde{w}_3(\varepsilon)&:=\overline{u}_3.
	\end{align*}

	We have that, in the same spirit as~\eqref{weta}:
	\begin{equation}
		\label{wu}
		\begin{aligned}
			\tilde{\bm{w}}(\varepsilon) \to& \overline{\bm{u}} \textup{ in }\bm{H}^1(\Omega),\\
			\dfrac{1}{\varepsilon} e_{\alpha\| \beta}(\varepsilon;\tilde{\bm{w}}(\varepsilon)) \to& \{-x_3 \rho_{\alpha \beta}(\overline{\bm{u}})\} \textup{ in } L^2(\Omega),\\
			\textup{The sequence }\left(\dfrac{1}{\varepsilon} e_{\alpha\| 3}(\varepsilon;\tilde{\bm{w}}(\varepsilon))\right)_{\varepsilon>0} &\textup{ converges in }L^2(\Omega).
		\end{aligned}
	\end{equation}

	Specializing $\bm{v}=(\tilde{\bm{w}}(\varepsilon)-\bm{u}_\kappa(\varepsilon))$ in the variational equations of Problem~\ref{problem1scaled} and repeating the same computations as in part~(iv) gives:
	$$
	0\le \limsup_{\varepsilon \to 0} \Lambda(\varepsilon)\le-\int_\omega a^{\alpha\beta\sigma\tau} \Upsilon_{\sigma\tau} \Upsilon_{\alpha\beta} \sqrt{a} \dd y \le 0.
	$$
	
In conclusion, by the uniform positive-definiteness of the two-dimensional fourth-order elasticity tensor (Lemma~\ref{lem:2}) and the asymptotic behavior of the sequence of vector fields $(\tilde{\bm{w}}(\varepsilon))_{\varepsilon>0}$ in~\eqref{wu}, we obtain $\Upsilon_{\alpha\beta}=0$ and that
	$$
	\lim_{\varepsilon \to 0} \Lambda(\varepsilon)=0.
	$$
	
	These relations in turn imply that the strong convergence
	$$
	\dfrac{1}{\varepsilon} e_{i\|j}(\varepsilon) \to e_{i\|j}^1\quad\textup{ in }L^2(\Omega)
	$$
	holds. The functions $e_{\alpha\|\beta}^1$ are uniquely determined, since they are given by
	$$
	e_{\alpha\|\beta}^1=-x_3 \rho_{\alpha\beta}(\overline{\bm{u}})
	$$
	and the vector field $\overline{\bm{u}}$ is uniquely determined as the unique solution of Problem~\ref{problemLim}. That the functions $e_{i\|3}^1$ are uniquely determined then follows from the relations established in part~(iii). Therefore, the whole sequence $(\varepsilon^{-1} e_{i\|j}(\varepsilon))_{\varepsilon>0}$ strongly converges to the functions $e_{i\|j}^1$ in $L^2(\Omega)$ and the proof is complete.
\end{proof}

Observe that the conclusion of Theorem~\ref{t:5} and Theorem~4.2-1(b) of~\cite{Ciarlet2000} give that for each $\delta>0$ and for each $1 \le i \le 3$ there exists a number $\varepsilon_2=\varepsilon_2(\delta)>0$ such that for all $0<\varepsilon<\varepsilon_2$ it results
\begin{equation}
	\label{lim:2}
	\left\|\dfrac{1}{2}\int_{-1}^{1} u_{i,\kappa}(\varepsilon)(\cdot,x_3) \dd x_3 - \zeta_i\right\|_{H^1(\omega)} < \dfrac{\delta}{2},
\end{equation}
where $\bm{u}_\kappa(\varepsilon)=(u_{i,\kappa}(\varepsilon))$ is the unique solution of Problem~\ref{problem1scaled}, and $\bm{\zeta}=(\zeta_i)$ is the unique solution of Problem~\ref{problemLim}, and $\kappa$ is as in~\eqref{kappa}.

Therefore, the convergence as $\varepsilon \to 0$ of the solutions $\bm{u}(\varepsilon)$ of Problem~\ref{problem0scaled} to the solution $\bm{\zeta}$ of Problem~\ref{problemLim} can thus be established as a direct corollary of Theorem~\ref{th:beta-6} and Theorem~\ref{t:5}.

\begin{corollary}
\label{cor1}
Let $\omega$ be a domain in $\mathbb{R}^2$, let $\bm{\theta} \in \mathcal{C}^3(\overline{\omega}; \mathbb{E}^3)$ be the middle surface of a \emph{flexural shell}, let $\gamma_0$ be a non-zero length portion of the boundary $\gamma$ (cf. section~\ref{Sec:3}) and let $\bm{q} \in \mathbb{E}^3$ be a non-zero vector given once and for all. Let us consider the \emph{non-trivial} space (cf. section~\ref{Sec:3})
$$
\bm{V}_F(\omega):=\{\bm{\eta}=(\eta_i) \in H^1(\omega)\times H^1(\omega)\times H^2(\omega); \gamma_{\alpha \beta}(\bm{\eta})=0 \textup{ in }\omega \textup{ and }\eta_i=\partial_{\nu}\eta_3=0 \textup{ on }\gamma_0\},
$$
and let us define the set
\begin{align*}
	\bm{U}_F (\omega) &:= \{\bm{\eta} = (\eta_i) \in \bm{V}_F(\omega);\big(\bm{\theta} (y) + \eta_i (y) \bm{a}^i(y)\big) \cdot \bm{q} \ge 0 \textup{ for a.a. } y \in \omega \}.
\end{align*}
%and assume that the immersion $\bm{\theta}$ is such that
%\[
%d:= \inf_{y\in \overline{\omega}} (\bm{\theta} (y) \cdot \bm{q}) > 0.
%\]

Let there be given a family of linearly elastic flexural shells with the same middle surface $\bm{\theta} (\overline{\omega})$ and thickness $2 \varepsilon > 0$, and let $\bm{u}(\varepsilon) \in \bm{U}(\varepsilon;\Omega)$ denote, for each $\varepsilon > 0$, the unique solution of Problem~\ref{problem0scaled}.

Then, we have that
$$
\dfrac{1}{2} \int_{-1}^{1} \bm{u}(\varepsilon) \dd x_3 \to \bm{\zeta},\quad \textup{ in } \bm{H}^1(\omega) \textup{ as } \varepsilon\to 0,
$$
where $\bm{\zeta}$ is the unique solution to the two-dimensional variational Problem~\ref{problemLim}.	
\end{corollary}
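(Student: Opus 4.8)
The plan is to derive the corollary from Theorem~\ref{t:5} by a triangle inequality, using the penalized scaled solutions $\bm{u}_\kappa(\varepsilon)$ with the choice $\kappa=\sqrt\varepsilon$ of~\eqref{kappa} as an intermediary. Two facts already proved are all that is needed. First, the stability estimate~\eqref{lim:1}: for every $\delta>0$ there is a threshold $\kappa_0=\kappa_0(\delta)>0$, \emph{independent of $\varepsilon$}, such that $\|\bm{u}(\varepsilon)-\bm{u}_\kappa(\varepsilon)\|_{\bm{H}^1(\Omega)}<\delta/2$ for every $\varepsilon>0$ whenever $0<\kappa<\kappa_0$, where $\bm{u}(\varepsilon)$ and $\bm{u}_\kappa(\varepsilon)$ solve Problems~\ref{problem0scaled} and~\ref{problem1scaled}. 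Second, the convergence~\eqref{lim:2}, which is a direct consequence of Theorem~\ref{t:5} together with Theorem~4.2-1(b) of~\cite{Ciarlet2000}: for every $\delta>0$ there is $\varepsilon_2(\delta)>0$ such that $\bigl\|\tfrac12\int_{-1}^1\bm{u}_\kappa(\varepsilon)\dd x_3-\bm{\zeta}\bigr\|_{\bm{H}^1(\omega)}<\delta/2$ for all $0<\varepsilon<\varepsilon_2(\delta)$.

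Concretely, I would fix $\delta>0$ and proceed as follows. The averaging map $\bm{v}\mapsto\tfrac12\int_{-1}^1\bm{v}(\cdot,x_3)\dd x_3$ is continuous from $\bm{H}^1(\Omega)$ into $\bm{H}^1(\omega)$, with operator norm at most $1$ (Theorem~4.2-1(b) of~\cite{Ciarlet2000}; alternatively this is immediate from Jensen's inequality, since the average over $x_3$ does not involve $\partial_3$ and $\int_\omega\bigl(\tfrac12\int_{-1}^1 v\dd x_3\bigr)^2\dd y\le\tfrac12\|v\|_{L^2(\Omega)}^2$, and likewise for $\partial_\alpha v$). Since $\kappa=\sqrt\varepsilon\to0$ as $\varepsilon\to0$, for all $\varepsilon<\kappa_0(\delta)^2$ we have $0<\kappa<\kappa_0(\delta)$, so~\eqref{lim:1} gives
\[
\left\|\tfrac12\int_{-1}^1\bigl(\bm{u}(\varepsilon)-\bm{u}_\kappa(\varepsilon)\bigr)\dd x_3\right\|_{\bm{H}^1(\omega)}\le\|\bm{u}(\varepsilon)-\bm{u}_\kappa(\varepsilon)\|_{\bm{H}^1(\Omega)}<\frac{\delta}{2}.
\]
Combining this with~\eqref{lim:2} and the triangle inequality yields $\bigl\|\tfrac12\int_{-1}^1\bm{u}(\varepsilon)\dd x_3-\bm{\zeta}\bigr\|_{\bm{H}^1(\omega)}<\delta$ for all $0<\varepsilon<\min\{\kappa_0(\delta)^2,\varepsilon_2(\delta)\}$, which is precisely the asserted convergence as $\varepsilon\to0$.

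There is no serious obstacle: the corollary is essentially a bookkeeping consequence of Theorem~\ref{t:5}. The only point that genuinely matters is that the threshold $\kappa_0$ in~\eqref{lim:1} be \emph{independent of $\varepsilon$} --- this is exactly the standing assumption recorded after~\eqref{lim:0} --- for otherwise binding $\kappa$ to $\varepsilon$ through $\kappa=\sqrt\varepsilon$ would not eventually defeat the threshold uniformly and the two error terms could not be controlled simultaneously. Everything else (continuity of the averaging operator with an $\varepsilon$-independent constant, passage from $\bm{H}^1(\Omega)$-estimates to $\bm{H}^1(\omega)$-estimates of the averages) is routine.
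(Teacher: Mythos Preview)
Your proposal is correct and follows essentially the same approach as the paper: a triangle inequality using the penalized solution $\bm{u}_\kappa(\varepsilon)$ with $\kappa=\sqrt\varepsilon$ as intermediary, invoking~\eqref{lim:1}, Theorem~4.2-1(b) of~\cite{Ciarlet2000} for the averaging operator, and~\eqref{lim:2}. Your choice of threshold $\tilde\varepsilon=\min\{\kappa_0(\delta)^2,\varepsilon_2(\delta)\}$ is slightly more explicit than the paper's, which first sets $\tilde\varepsilon=\varepsilon_2(\delta)$ and then reduces it if $\sqrt\varepsilon>\kappa_0(\delta)$, but the argument and its reliance on the $\varepsilon$-independence of $\kappa_0$ are identical.
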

\begin{proof}
To prove the claim, we have to show that for each $\delta>0$ and for all $1 \le i \le 3$ there exists a number $\tilde{\varepsilon}=\tilde{\varepsilon}(\delta)>0$ such that for all $0<\varepsilon<\tilde{\varepsilon}$ it results
\begin{equation}
\label{lim:3}
\left\|\dfrac{1}{2} \int_{-1}^{1} u_i(\varepsilon)(\cdot, x_3) \dd x_3 - \zeta_i\right\|_{H^1(\omega)} < \delta,
\end{equation}
where $\bm{u}(\varepsilon)=(u_i(\varepsilon))$ is the unique solution of Problem~\ref{problem0scaled} and $\bm{\zeta}=(\zeta_i)$ is the unique solution of Problem~\ref{problemLim}.

In order to prove~\eqref{lim:3}, fix $\delta>0$ and take $\tilde{\varepsilon}:=\varepsilon_2=\varepsilon_2(\delta)$. For each $0<\varepsilon <\varepsilon_2$ we have that an application of~\eqref{lim:1}, the triangle inequality, Theorem~4.2-1 (b) of~\cite{Ciarlet2000}, and~\eqref{lim:2} gives
\begin{equation}
\label{lim:4}
\begin{aligned}
&\left\|\dfrac{1}{2} \int_{-1}^{1} u_i(\varepsilon) \dd x_3 - \zeta_i\right\|_{H^1(\omega)}
\le \left\|\dfrac{1}{2}\int_{-1}^{1} u_{i,\kappa}(\varepsilon)(\cdot,x_3) \dd x_3 - \zeta_i\right\|_{H^1(\omega)}
+\dfrac{1}{\sqrt{2}}\|u_i(\varepsilon)-u_{i,\kappa}(\varepsilon)\|_{H^1(\Omega)}\\
&< \dfrac{\delta}{2}+\dfrac{\delta}{2}=\delta,
\end{aligned}
\end{equation}
whenever $0<\kappa < \kappa_0(\delta)$ (viz. Theorem~\ref{th:beta-6} for a reasonable sufficient condition ensuring that $\kappa_0$ is independent of $\varepsilon$) and $\kappa$ is as in~\eqref{kappa}. 

Observe that if we take $\kappa$ as in~\eqref{kappa} and it resulted $\kappa_0(\delta)<\sqrt{\varepsilon}=\kappa$, then it would suffice to reduce $\varepsilon_2$ until the requirement for the penalty parameter is met. In doing so, the fact that $\kappa_0$ is independent of $\varepsilon$ plays a critical role in establishing the sought convergence.

The estimate~\eqref{lim:4} means that the average across the thickness of the solutions $\bm{u}(\varepsilon)$ of Problem~\ref{problem0scaled} converge to the solution of Problem~\ref{problemLim} as $\varepsilon \to 0$, as it had to be proved.
\end{proof}

\begin{figure}[H]
\centering
\includegraphics[width=0.5\textwidth]{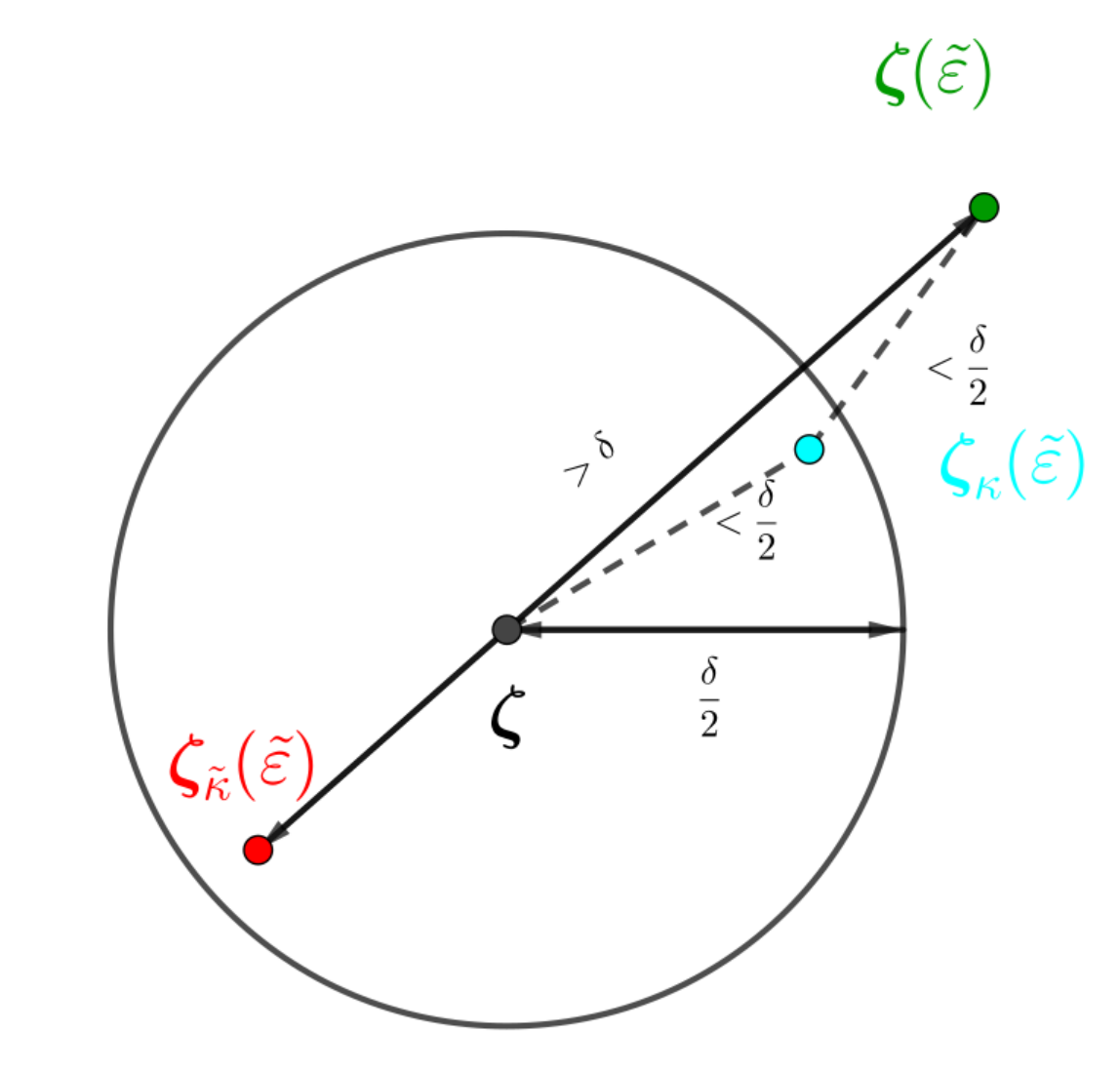}
\caption{\scriptsize \emph{Geometrical interpretation of Corollary~\ref{cor1} and criticality of the condition~\eqref{kappa}}. 
	Let $\delta>0$ be fixed and let $0<\tilde{\varepsilon}<\varepsilon_2$, where $\varepsilon_2$ is the threshold parameter found in Corollary~5.2.
    Let $\bm{\zeta}$ denote the solution to Problem~\ref{problemLim}. 
    Let $\tilde{\kappa}$ be a ``penalty'' parameter such that $\tilde{\kappa}:=\sqrt{\tilde{\varepsilon}}$, and for which the distance between the averaged solution of Problem~\hyperref[problem1scaled]{$\mathcal{P}_{\tilde{\kappa}}(\tilde{\varepsilon};\Omega)$} (i.e., the vector field $\bm{\zeta}_{\tilde{\kappa}}(\tilde{\varepsilon})$) and the solution $\bm{\zeta}$ of Problem~\ref{problemLim} with respect to the $\bm{H}^1(\omega)$ norm is less than $\delta/2$.
	\newline
	If $\tilde{\kappa}=\sqrt{\tilde{\varepsilon}} >\kappa_0=\kappa_0(\delta)$, it \emph{might} happen, as a consequence of~\eqref{lim:1}, that the distance between the averaged solution of Problem~\hyperref[problem1scaled]{$\mathcal{P}_{\tilde{\kappa}}(\tilde{\varepsilon};\Omega)$} (i.e., the vector field $\bm{\zeta}_{\tilde{\kappa}}(\tilde{\varepsilon})$) and the averaged solution of Problem~\hyperref[problem0scaled]{$\mathcal{P}(\tilde{\varepsilon};\Omega)$} (i.e., the vector field $\bm{\zeta}(\tilde{\varepsilon})$) is, with respect to the $\bm{H}^1(\omega)$ norm, strictly greater than the fixed number $\delta$.
	\newline
	If, however, we further reduce the parameter $\tilde{\varepsilon}$ until $\tilde{\kappa}<\kappa_0$, then we can find an element $\bm{\zeta}_{\kappa}(\tilde{\varepsilon})$, solution of Problem~\hyperref[problem1scaled]{$\mathcal{P}_\kappa(\tilde{\varepsilon};\Omega)$}, whose distance (once again with respect to the $\bm{H}^1(\omega)$ norm) from both $\bm{\zeta}$ and $\bm{\zeta}(\tilde{\varepsilon})$ is strictly less than $\delta/2$. In the latter step, a critical role is played by the fact that $\kappa_0$ is assumed to be independent of $\varepsilon$ (viz. Theorem~\ref{th:beta-6} for a reasonable sufficient condition). This figure originally appeared in~\cite{PieJDE2022}.
}
\label{fig:1}
\end{figure}

It remains to ``\emph{de-scale}'' the results of Theorem~\ref{t:5} and Corollary~\ref{cor1}, which apply to the solutions $\bm{u}(\varepsilon)$ of the \emph{scaled} problem~\ref{problem0scaled}. This means that we need to convert these results into ones about the \emph{unknown} $u^\varepsilon_i \bm{g}^{i, \varepsilon} : \overline{\Omega^\varepsilon} \to \mathbb{E}^3$, which represents the physical \emph{three-dimensional vector field} of the actual reference configuration of the shell. As shown in the next theorem, this conversion is most conveniently achieved through the introduction of the \emph{averages} $\displaystyle \frac{1}{2\varepsilon} \int^\varepsilon_{-\varepsilon} u^\varepsilon_i \bm{g}^{i, \varepsilon} \dd x^\varepsilon_3$ across the thickness of the shell.

\begin{theorem} \label{t:6}
Let the assumptions on the data be as in \emph{section~\ref{Sec:3}} and let the assumptions on the immersion $\bm{\theta} \in \mathcal{C}^3 (\overline{\omega}; \mathbb{E}^3)$ be as in \emph{Theorem \ref{t:4}}. Let $\bm{u}^\varepsilon = (u^\varepsilon_i) \in \bm{U}(\Omega^\varepsilon)$ denote for each $\varepsilon > 0$ the unique solution of the variational Problem~\ref{problem0} and let $\bm{\zeta} \in \bm{U}_F(\omega)$ denote the unique solution to the variational inequalities in Problem~\ref{problemLim}. Then
\begin{align*}
  \frac{1}{2\varepsilon} \int^\varepsilon_{-\varepsilon} u^\varepsilon_\alpha \bm{g}^{\alpha , \varepsilon} \dd x^\varepsilon_3 & \to \zeta_\alpha \bm{a}^\alpha, \quad \textup{ in } \bm{H}^1 (\omega) \textup{ as } \varepsilon \to 0, \\
  \frac{1}{2\varepsilon} \int^\varepsilon_{-\varepsilon} u^\varepsilon_3 \bm{g}^{3 , \varepsilon} \dd x^\varepsilon_3 &\to \zeta_3 \bm{a}^3, \quad
\textup{ in } \bm{H}^1 (\omega) \textup{ as } \varepsilon \to 0.
\end{align*}
\end{theorem}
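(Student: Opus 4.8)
The plan is to transfer the convergences already established over the fixed domain $\Omega$ (Theorem~\ref{t:5} and Corollary~\ref{cor1}) to statements over the thickness-dependent domain $\Omega^\varepsilon$ by the change of variable $x^\varepsilon_3 = \varepsilon x_3$, and then to absorb the geometric factors $\bm{g}^{i,\varepsilon}$ using the expansions of Lemma~\ref{lem:2}. First I would perform the substitution $x^\varepsilon_3 = \varepsilon x_3$ in the averaging integrals: recalling the scalings $u_i(\varepsilon)(x) = u^\varepsilon_i(x^\varepsilon)$ and $\bm{g}^i(\varepsilon)(x) = \bm{g}^{i,\varepsilon}(x^\varepsilon)$, and writing $\bm{u}(\varepsilon) = (u_i(\varepsilon))$ for the solution of Problem~\ref{problem0scaled}, one gets
\[
\frac{1}{2\varepsilon}\int^\varepsilon_{-\varepsilon} u^\varepsilon_\alpha \bm{g}^{\alpha,\varepsilon}\dd x^\varepsilon_3 = \frac12\int^1_{-1} u_\alpha(\varepsilon)\,\bm{g}^\alpha(\varepsilon)\dd x_3
\quad\textup{ and }\quad
\frac{1}{2\varepsilon}\int^\varepsilon_{-\varepsilon} u^\varepsilon_3 \bm{g}^{3,\varepsilon}\dd x^\varepsilon_3 = \frac12\int^1_{-1} u_3(\varepsilon)\,\bm{g}^3(\varepsilon)\dd x_3 ,
\]
so everything is reduced to passing to the limit in these two expressions over $\Omega$. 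For the \emph{normal} component I would then use the \emph{exact} identity $\bm{g}^3(\varepsilon) = \bm{a}^3$ recorded in Lemma~\ref{lem:2}: since $\bm{a}^3$ is independent of $x_3$ and belongs to $\mathcal{C}^2(\overline{\omega};\mathbb{E}^3)$, the second expression equals $\left(\frac12\int^1_{-1} u_3(\varepsilon)\dd x_3\right)\bm{a}^3$, and the convergence $\frac12\int^1_{-1} u_3(\varepsilon)\dd x_3 \to \zeta_3$ in $H^1(\omega)$ furnished by Corollary~\ref{cor1}, combined with the continuity of multiplication by $\bm{a}^3$ on $H^1(\omega)$, yields $\frac{1}{2\varepsilon}\int^\varepsilon_{-\varepsilon} u^\varepsilon_3 \bm{g}^{3,\varepsilon}\dd x^\varepsilon_3 \to \zeta_3\bm{a}^3$ in $\bm{H}^1(\omega)$.

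For the \emph{tangential} components I would split $\bm{g}^\alpha(\varepsilon) = \bm{a}^\alpha + (\bm{g}^\alpha(\varepsilon) - \bm{a}^\alpha)$, so that
\[
\frac12\int^1_{-1} u_\alpha(\varepsilon)\,\bm{g}^\alpha(\varepsilon)\dd x_3 = \left(\frac12\int^1_{-1} u_\alpha(\varepsilon)\dd x_3\right)\bm{a}^\alpha + \frac12\int^1_{-1} u_\alpha(\varepsilon)\bigl(\bm{g}^\alpha(\varepsilon) - \bm{a}^\alpha\bigr)\dd x_3 .
\]
The first term converges to $\zeta_\alpha\bm{a}^\alpha$ in $\bm{H}^1(\omega)$ exactly as in the normal case (Corollary~\ref{cor1} and $\bm{a}^\alpha\in\mathcal{C}^2$). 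For the second term I would combine the continuity of the averaging operator $f\mapsto\frac12\int^1_{-1}f(\cdot,x_3)\dd x_3$ from $\bm{H}^1(\Omega)$ into $\bm{H}^1(\omega)$ (Theorem~4.2-1 of~\cite{Ciarlet2000}) with the elementary product estimate $\|fg\|_{H^1(\Omega)}\le C\|f\|_{H^1(\Omega)}\|g\|_{W^{1,\infty}(\Omega)}$ to obtain
\[
\left\| \frac12\int^1_{-1} u_\alpha(\varepsilon)\bigl(\bm{g}^\alpha(\varepsilon) - \bm{a}^\alpha\bigr)\dd x_3 \right\|_{\bm{H}^1(\omega)} \le C\,\|u_\alpha(\varepsilon)\|_{H^1(\Omega)}\,\|\bm{g}^\alpha(\varepsilon) - \bm{a}^\alpha\|_{W^{1,\infty}(\Omega)} .
\]
Here $\|u_\alpha(\varepsilon)\|_{H^1(\Omega)}$ is bounded uniformly in $\varepsilon$: this follows, e.g., by taking $\bm{v} = \bm{0}$ (which lies in $\bm{U}(\varepsilon;\Omega)$ since $\bm{\Theta}(\overline{\Omega^\varepsilon})\subset\mathbb{H}$) in the variational inequalities of Problem~\ref{problem0scaled} and invoking the uniform positive-definiteness of $(A^{ijk\ell}(\varepsilon))$ and Korn's inequality of Theorem~\ref{t:4}, or alternatively from part~(i) of the proof of Theorem~\ref{t:5} together with~\eqref{lim:1}. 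The remaining factor tends to $0$: by Lemma~\ref{lem:2} the covariant vectors $\bm{g}_i(\varepsilon)$ are affine in $\varepsilon x_3$ with coefficients of class $\mathcal{C}^1(\overline{\omega})$, hence the matrix field $(\bm{g}_i(\varepsilon)\cdot\bm{g}_j(\varepsilon))$ and, for $\varepsilon$ small, its inverse depend on $(\varepsilon x_3,y)$ through fixed $\mathcal{C}^1$ functions, so that $\bm{g}^\alpha(\varepsilon)\to\bm{a}^\alpha$ in $\mathcal{C}^1(\overline{\Omega};\mathbb{E}^3)$ as $\varepsilon\to 0$. Adding the two contributions gives $\frac{1}{2\varepsilon}\int^\varepsilon_{-\varepsilon} u^\varepsilon_\alpha \bm{g}^{\alpha,\varepsilon}\dd x^\varepsilon_3 \to \zeta_\alpha\bm{a}^\alpha$ in $\bm{H}^1(\omega)$.

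The only genuinely delicate point is upgrading the convergence $\bm{g}^\alpha(\varepsilon)\to\bm{a}^\alpha$ from the sup-norm (as stated in Lemma~\ref{lem:2}) to the $W^{1,\infty}(\Omega)$-norm, since it is this stronger form that makes the product estimate usable once one takes $\bm{H}^1(\omega)$-norms; the rest is a routine assembly of the change of variable, Corollary~\ref{cor1}, and the uniform $\bm{H}^1(\Omega)$-bound on $\bm{u}(\varepsilon)$.
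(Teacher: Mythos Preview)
Your argument is correct and follows essentially the same route as the paper's: the paper omits the proof, simply recording that it is analogous to that of Theorem~6.4-1 in~\cite{Ciarlet2000}, and what you have written is precisely that standard de-scaling argument (change of variable $x^\varepsilon_3=\varepsilon x_3$, the exact identity $\bm{g}^3(\varepsilon)=\bm{a}^3$ for the normal part, the splitting $\bm{g}^\alpha(\varepsilon)=\bm{a}^\alpha+(\bm{g}^\alpha(\varepsilon)-\bm{a}^\alpha)$ for the tangential part, and the $W^{1,\infty}$-convergence of $\bm{g}^\alpha(\varepsilon)$ combined with the uniform $\bm{H}^1(\Omega)$-bound and Corollary~\ref{cor1}). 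Your justification of the $\mathcal{C}^1(\overline{\Omega})$-convergence of $\bm{g}^\alpha(\varepsilon)$ via the block structure of $(g_{ij}(\varepsilon))$ and the regularity afforded by $\bm{\theta}\in\mathcal{C}^3$ is exactly the point one needs beyond the sup-norm statement of Lemma~\ref{lem:2}.
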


\begin{proof}
The proof is analogous to that of Theorem~6.4-1 in~\cite{Ciarlet2000} and for this reason is omitted.
\end{proof}

In view of the scalings, viz., $u^\varepsilon_i (x^\varepsilon) = u_i (\varepsilon) (x)$ at each $x^\varepsilon \in \Omega^\varepsilon$, and of the assumption  on the data, viz., $f^{i, \varepsilon} (x^\varepsilon) = f^i(x)$ at each $x^\varepsilon \in \Omega^\varepsilon$, made in section~\ref{Sec:3}, it is natural to also ``de-scale'' the unknown appearing in the \emph{limit two-dimensional problem} found in Theorem \ref{t:4}, by letting
\[
  \zeta^\varepsilon_i (y) := \zeta_i (y) \textup{ at each } y \in \omega,
\]
and by using in its formulation the contravariant components
\[
p^{i , \varepsilon} := \varepsilon^3 \left\{\int_{-1}^{1}f^i \dd x_3\right\}=\int^\varepsilon_{-\varepsilon} f^{i, \varepsilon}\dd x^\varepsilon_3
\]
instead of their scaled counterparts $p^i= \int^1_{-1} f^i \dd x_3$. In this fashion, it is  immediately found that $\bm{\zeta}^\varepsilon = (\zeta^\varepsilon_i) \in \bm{U}_F (\omega)$ \emph{is the unique solution to the variational inequalities}
\begin{align*}
\dfrac{\varepsilon^3}{3} \int_\omega a^{\alpha \beta \sigma \tau} \rho_{\sigma \tau} (\bm{\zeta}^\varepsilon
) \rho_{\alpha \beta} (\bm{\eta}- \bm{\zeta}^\varepsilon) \sqrt{a} \dd y \ge \int_\omega p^{i, \varepsilon} (\eta_i - \zeta^\varepsilon_i) \sqrt{a} \dd y,
\end{align*}
for all $\bm{\eta} = (\eta_i) \in \bm{U}_F (\omega)$. These inequalities now display the \emph{factor} $\dfrac{\varepsilon^3}{3}$, which always appears in the left-hand sides of equations modelling \emph{flexural} shells.

\section*{Conclusion and final remarks}

In this paper we identified a set of two-dimensional variational inequalities that model the displacement of a linearly elastic flexural shell subjected to a confinement condition, expressing that all the points of the admissible deformed configurations remain in a given half-space.

The starting point of the rigorous asymptotic analysis we carried out is a set of variational inequalities based on the classical equations of three-dimensional linearized elasticity, and posed over a non-empty, closed and convex subset of a suitable Sobolev space. These variational inequalities govern the displacement of a three-dimensional flexural shell subject to a confinement condition like the one recalled beforehand.

By means of the penalized version of the aforementioned problem (i.e., the set of variational inequalities based on the classical equations of three-dimensional linearized elasticity), we managed to show that, as the thickness parameter approaches zero, the average across the thickness of the solution of the original three-dimensional model converges to the solution of a \emph{ad hoc} two dimensional model. In this regard, it is worth recalling that the rigorous asymptotic analysis (Theorem~\ref{t:5}) hinged on an \emph{ad hoc} scaling of the penalty coefficient with respect to the shell thickness as well as the critical assumption~\eqref{kappa}.

The two-dimensional model we recovered in this paper coincides with the two-dimensional model recovered as a result of a rigorous asymptotic analysis carried out starting from Koiter's model in the case where the linearly elastic shell under consideration is subjected to an obstacle (viz. \cite{CiaPie2018b} and~\cite{CiaPie2018bCR}).

It is worth mentioning that, unlike the case where the linearly elastic shell under consideration was a linearly elastic elliptic membrane shell (viz. \cite{CiaMarPie2018b}, \cite{CiaMarPie2018} and~\cite{PieJDE2022}), the rigorous asymptotic analysis carried out in this paper does not resort to any additional assumption apart from those already made in~\cite{CiaLodsMia1996} (see also Chapter~6 of~\cite{Ciarlet2000}).

Indeed, when the linearly elastic shell under consideration was a linearly elastic elliptic membrane shell, the authors had to resort to a \emph{ad hoc} ``density property'' in order to carry out the asymptotic analysis leading to the identification of a suitable two-dimensional set of variational inequalities.
The ``density property'' we mentioned is ensured only under certain geometrical assumptions, which restrict the applicability of the recalled result. This very ``density property'' was recently exploited in~\cite{Pie-2021-2} (see also~\cite{Pie-2021-1}) to show that the solution of a certain obstacle problem in linearized elasticity enjoys, at least locally, higher regularity properties.

\section*{Acknowledgements}
The author is greatly indebted to Professor Philippe G. Ciarlet for his encouragement and guidance.

The author would like to express his sincere gratitude to the Anonymous Referees for their suggested improvements.

The author was partly supported by the Research Fund of Indiana University.

The author declares that no experimental data was used in the preparation of this manuscript.

The author declares that there is no conflict of interests.

\bibliographystyle{abbrvnat} %unsrt	
\bibliography{references.bib}	

\end{document}